\newtheorem{thm}{\sc Theorem}
\newtheorem{lemma}[thm]{\sc Lemma}
\newtheorem{prop}[thm]{\sc Proposition}
\newtheorem{cor}[thm]{\sc Corollary}
\theoremstyle{definition} \newtheorem{defi}{\sc Definition}
\newtheorem{ex}[thm]{\sc Example}
\newtheorem{rem}[thm]{\sc Remark}
\DeclareMathOperator{\ii}{i}
\newcommand{\<}{\langle}
\renewcommand{\>}{\rangle}
\newcommand{\seq}[1]{\left\langle #1 \right\rangle}
\newcommand{\set}[1]{\left\{ #1 \right\}}
\DeclareMathOperator{\dom}{dom}
\DeclareMathOperator{\rank}{rank}
\DeclareMathOperator{\RE}{Re}
\newcommand{\A}{\mathbf{A}}
\newcommand{\B}{\mathcal{B}}
\newcommand{\C}{\mathbb{C}}
\newcommand{\Comp}{\C}
\newcommand{\D}[2]{{#2}\!\otimes\!{#1}}
\renewcommand{\H}{\mathcal{H}}
\newcommand{\N}{\mathbb{N}}
\newcommand{\norm}[1]{\left\|#1\right\|}
\newcommand{\vp}{\varphi}
\newcommand{\R}{\mathbb{R}}
\newcommand{\Real}{\mathbb{R}}
\newcommand{\U}{$\mathbf{U}$} 
\newcommand{\WT}{$\mathbf{W}$} 
   \def\uwat#1{\textcolor{red}{$\blacktriangleright$}{\sl #1}
   \textcolor{red}{$\blacktriangleleft$}}
\title[Rank two perturbations]{Rank two perturbations of matrices and operators and operator model for $\mathbf{t}$-transformation of probability measures}
\thanks{Corresponding author's email: janusz.wysoczanski@math.uni.wroc.pl, tel.: +48 71 3757095. 
}
\author{Anna Kula}
\address{AK: Institute of Mathematics, University of Wrocław, pl. Grunwaldzki 2/4, 50-384 Wrocław, Poland}
\email{anna.kula@math.uni.wroc.pl}
\author{Micha{\l} Wojtylak}
\address{MW: Institute of Mathematics, Jagiellonian University, ul. Łojasiewicza 6, 30-348 Krak{\'o}w, Poland} 
\email{michal.wojtylak@im.uj.edu.pl}
\thanks{MW acknowledges the financial support by the NCN (National Science Center) grant, decision No. DEC-2013/11/B/ST1/03613}
\author{Janusz Wysocza\'nski} 
\address{JW: Institute of Mathematics, University of Wrocław, pl. Grunwaldzki 2/4, 50-384 Wrocław, Poland}
\email{janusz.wysoczanski@math.uni.wroc.pl}
\keywords{Rank two perturbation of operators, eigenvalues, singular values, free Meixner laws, noncommutative probability, measure transforms.}
\subjclass[2010]{47A55, 15A18, 46C20, 46L53, 46N30.}
\begin{document}

\maketitle

\hyphenation{va-ni-shing}

 \begin{abstract}
Rank two parametric perturbations of operators and matrices are studied in various settings. 
In the finite dimensional case the formula for a characteristic polynomial is derived and the large parameter asymptotics of the spectrum is computed. The large parameter asymptotics of a rank one perturbation of singular values and condition number are discussed as well. 
In the operator case the formula for a rank two transformation of the spectral measure is derived and it appears to be the $t$-transformation of a probability measure,  studied previously in the free probability context. New transformation of measures is studied and several examples are presented. 
 \end{abstract}

\section*{Introduction}

The aim of this paper is to investigate the rank two deformations of operators. We recall that several papers have studied the topic of rank one perturbations of matrices (e.g. \cite{MBO,MMRR1,MMRR2,RW12,Sa0,Sa1,Sa2}) and operators (e.g. \cite{DHS1,DHS3,SWW1,SWW2,VL}), also rank $k$ perturbations of matrices were recently considered in \cite{Batzkeetal,Thompson76}.  Nonetheless, rank two perturbations seems to be a topic that has not attracted much attention, despite its role in mathematical modelling problems. 
Let us mention some of the situations, where rank two perturbations appear naturally.

When a physical system is modelled by a linear ODE with constant coefficients $x'=Ax$ frequently the physical laws of the system impose a particular structure of the entries of the matrix.   e.g. the canonical equations of the classical mechanics lead to a Hamiltonian matrix  $A$, i.e. a matrix of the form
$$
A=\begin{pmatrix} A_0 & A_1\\ A_2 & -A_0^\top\end{pmatrix},\quad A_1=A_1^\top,\ A_2=A_2^\top. 
$$
Observe that every rank one perturbation of $A_0$ results in a rank two perturbation of $A$ and a change in one off diagonal entry of $A_1$ or $A_2$  implies  change in the symmetric entry  and gives a rank two deformation of $A$. The topic of stability of rank two perturbations of Hamiltonian systems 
is discussed further in Remark \ref{Phasetr}.  Similar problems occur in modelling electronic circuits \cite{Freund10}, where a change in one parameter of the electric network (e.g. cutting the electric transmitter, increasing the capacity of one capacitor, etc.) leads to a rank two perturbation of a linear pencil.
 Another application of the theory of rank two perturbations comes from  modelling the polydisperse sedimentation, see e.g. \cite{Berres1,Berres2}. The necessary condition for a successful modelling is that the perturbed matrix has simple, real and distinct eigenvalues.  Based on our calculations we will provide a criterion on the spectrum of a rank two perturbations of a matrix being real and simple, essentially simpler than the existing necessary and sufficient condition in \cite{DM}.
 
%

Furthermore, a rank one perturbation of a matrix $A$ results in a rank two perturbation of $A^*A$. Hence, to study rank one perturbations of singular values one needs to consider rank two perturbations of Hermitian matrices. Rank one perturbations of singular values appear e.g. in the context of rank one updatability of the svd decomposition, see e.g.  \cite{Stange1,Stange2}.


\medskip

In noncommutative probability transformations of measures play an important role, e.g. they allow to define new convolutions \cite{bozejko_wysoczanski98, bozejko_wysoczanski01, wojakowski07}. 
Of particular interest are the transformations for which there exists an operator model, i.e.  there exists a deformation of the operator such that the corresponding deformation of the spectral measure (distribution w.r.t. a certain state) is the transformation in question.  
One of such measure transforms, called the $\mathbf{t}$-transform, was introduced by Bo\.zejko and Wysocza\'nski in \cite{bozejko_wysoczanski98, bozejko_wysoczanski01}. It was shown that the $\mathbf{t}$-transform $\mu_t$ of the Wigner law $d\mu_0(t)=\frac{1}{2\pi}\mathbf{1}_{[-2,2]}(t)\sqrt{4-t^2}$ can be obtained when the free creation and the free anihilation on the free Fock space become (rank one) deformed. Then $\mu_t$ is the distribution of the rank two deformation of the sum of creation and annihilation. The operator corresponding to $\mu_t$ has the form
$$
{J_t}=\begin{pmatrix} 0 & 1-t & & \\ 1- t & 0 & 1\\ & 1 & 0 & 1\\ &&1& \ddots &\ddots\\ && & \ddots\end{pmatrix} \quad  (t\in\Real),
$$
and can be considered as bounded operator in $\ell^2$. The plots of $\mu_t$ for different values of $t$, obtained numerically, can be seen in Figure \ref{densities}. 
\begin{figure}[htb]
\includegraphics[width=200pt]{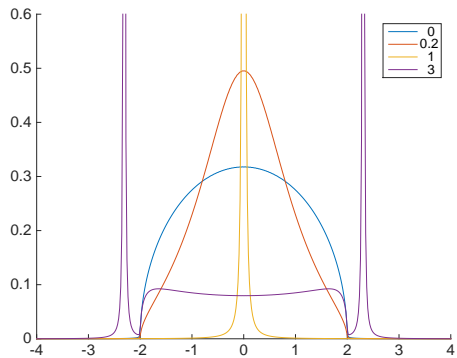}
\caption{Numerically obtained plots for densities $\mu_t$
}
\label{densities}\end{figure}
The result for the Wigner law suggests that the operator model for the $\mathbf{t}$-transform is related to the rank two deformation. 

\medskip
This article splits into three main parts. 
\begin{itemize}
\item 
Section 1 contains the main results on the Weyl function and spectrum of rank two perturbation of an operator. The results are first derived for an arbitrary perturbations of the form $A-s \D w u - t\D gh$, then the formula is analyzed in several instances. In particular we study the `diagonal' ($A-s \D u u - t\D {Au} {Au}$) and `anti-diagonal' ($A-s \D {Au} u - t\D u {Au}$) deformations.

\item In Section 2 we apply our general results in the linear algebra setting, i.e.\ to the finite dimensional operators. 
In particular, the characteristic polynomial of a rank two perturbation is computed. The results are then used to obtain a large parameter limits of the perturbed eigenvalues and large parameter limits of rank one perturbations of singular values.  

\item In Section 3 we apply the results to noncommutative probability framework, using the correspondence between self-adjoint operators and positive measures (their distributions). 
We show that the `anti-diagonal' rank two deformation provides an operator model of the measure transformation defined by Krystek and Yosida \cite{KrYos2004}, a generalization of the $\mathbf{t}$-transform, and we study its phase transition properties, by which we mean appearance or disappearance of discrete part of the measures. We also define and investigate the transformation that is related to the `diagonal' deformation. To the best of our knowledge this transform has not been known yet. 

\end{itemize}

\section{Weyl function for two-dimensional perturbations of operators}
\subsection{Preliminaries}

Through the whole paper $(\H,\seq{\cdot,\cdot})$ denotes a complex Hilbert space with a scalar product. The reader more interested in the finite dimensional part of this paper may consider $\H=\Comp^n$ with the euclidean inner product. A closed, densely defined operator is in such case a matrix, with  $\dom A=\Comp^n$ and the resolvent set is always nonempty. The results of the present section remain nontrivial in this case. By $B(\H)$ we denote the algebra of bounded operators on the Hilbert space $\H$, in particular we identify $B(\Comp^n)=\Comp^{n\times n}$.

\begin{defi}
Let $A$  be a closed, densely defined operator on $\H$ with nonempty resolvent set $\rho(A)$. We define the \textit{Weyl function of  $A$ with respect to  vectors $u, w \in \H$} by
\begin{equation}\label{Weyl function def}
 Q_{u,w} (z) := \<(z -A )^{-1}u,w\>, \quad z\in \rho(A).
\end{equation}
We abbreviate $Q_u(z):=Q_{u, u} (z)$ if $u=w$. 
\end{defi}

Clearly the function $Q_{u, w} (z)$ is holomorphic on $\rho(A)$. The behavior of  $Q_{u, w} (z)$ on the spectrum of $A$ might
be very complicated, especially in the operator case, see e.g. \cite{Greenstein,SWW2}, and is not the objective of the present paper. 


For $u, w, f\in\H$ we define the rank-one operator $\D wu$ by $({\D wu})f:=\seq{f,w}u$. 
The following Lemma exhibits crucial properties of a rank-one deformation of an invertible operator. In particular, it determines the values of $s$ for which the deformation $B+s\D{w}{u}$ is invertible.


\begin{lemma} \label{lem_inverse}
Let $B$ be a closed, densely defined, invertible operator, $u,w\in\H\setminus\set0$, $s\in\Comp$ and let $Q_{u,w}(z):=\<(z-B)^{-1}u,w\>$. Then
the operator
$$
B_s:=B+s\D{w}{u}
$$
is invertible if and only if $1+s \<B^{-1}u,w\>\neq 0$ and, in such case, the inverse is given by the formula
\begin{equation}\label{Btinv}
B_s^{-1}=B^{-1}-\frac{s}{1+s \<B^{-1}u,w\>}\D{B^{*-1}w}{B^{-1}u}.
\end{equation}
Moreover, if $z\in \rho(B)$ and $1+sQ_{u,w}(z)\neq 0$, then for every $\xi, \eta\in \H$ we have the formula
\begin{equation}\label{formula for Weyl general}
\<(z-B_s)^{-1}\xi, \eta\> = \frac{Q_{\xi\eta}(z)+s Q_{\xi\eta}(z)Q_{uw}(z)- sQ_{\xi w}(z)Q_{u\eta}(z)}{1+s Q_{uw}(z)},
\end{equation}
where we use the notation \eqref{Weyl function def} for the corresponding Weyl functions.  
\end{lemma}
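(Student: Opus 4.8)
The statement is a Sherman--Morrison--Woodbury identity adapted to a rank-one perturbation of a (possibly unbounded) operator, so the plan is to verify the closed form of the inverse directly, and then obtain the Weyl-function formula \eqref{formula for Weyl general} by applying the same identity to the shifted operator $z-B$. Since $B$ is boundedly invertible, $B^{-1}\in B(\H)$, and because $s\D{w}{u}$ is bounded we have $\dom B_s=\dom B$; hence $B_s$ is again closed and densely defined, and it is legitimate to look for a \emph{bounded} inverse $R$ with range in $\dom B$ and to check the two-sided identities $B_sR=I$ and $RB_s=I$ on $\dom B$.

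For the invertibility criterion, write $\alpha:=\<B^{-1}u,w\>$ and $c:=1+s\alpha$. The ``only if'' direction is immediate: since $(\D{w}{u})(B^{-1}u)=\<B^{-1}u,w\>u=\alpha u$, one gets $B_s(B^{-1}u)=u+s\alpha u=c\,u$, and as $B^{-1}u\neq0$ (injectivity of $B^{-1}$, $u\neq0$) this exhibits a nonzero kernel vector whenever $c=0$; thus invertibility forces $c\neq0$. For the converse I would assume $c\neq0$, set $R:=B^{-1}-\tfrac{s}{c}\D{B^{*-1}w}{B^{-1}u}$, and verify $B_sR=I$ by expansion. The algebraic inputs are $B\,\D{B^{*-1}w}{B^{-1}u}=\D{B^{*-1}w}{u}$, the identity $(\D{w}{u})B^{-1}=\D{B^{*-1}w}{u}$ (using $\<B^{-1}f,w\>=\<f,B^{*-1}w\>$), and the rank-one composition $(\D{w}{u})\D{B^{*-1}w}{B^{-1}u}=\alpha\,\D{B^{*-1}w}{u}$. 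Collecting the coefficient of $\D{B^{*-1}w}{u}$ gives $-\tfrac{s}{c}+s-\tfrac{s^2\alpha}{c}=\tfrac{-s+sc-s^2\alpha}{c}=0$, so $B_sR=I$; the identity $RB_s=I$ on $\dom B$ follows by the symmetric computation, and boundedness of $R$ then yields $0\in\rho(B_s)$ with $B_s^{-1}=R$, i.e.\ \eqref{Btinv}.

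For \eqref{formula for Weyl general} I would apply the first part to $\tilde B:=z-B$, which is boundedly invertible precisely because $z\in\rho(B)$, with perturbation coefficient $-s$ (since $z-B_s=\tilde B-s\D{w}{u}$). The criterion together with \eqref{Btinv} then gives the invertibility condition on $z-B_s$ expressed through $\<\tilde B^{-1}u,w\>=Q_{u,w}(z)$, along with a closed form for $(z-B_s)^{-1}$. Pairing that form with $\xi,\eta$ and rewriting the resulting scalars as Weyl functions --- via $\<(z-B)^{-1}\xi,\eta\>=Q_{\xi\eta}(z)$, $\<(z-B)^{-1}u,\eta\>=Q_{u\eta}(z)$, and $\<\xi,(z-B)^{*-1}w\>=\<(z-B)^{-1}\xi,w\>=Q_{\xi w}(z)$ --- and putting everything over a common denominator produces a quotient of exactly the shape of \eqref{formula for Weyl general}.

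The underlying algebra is routine, so there is no deep obstacle here; the genuinely delicate points are the domain bookkeeping (guaranteeing $R$ lands in $\dom B$ so the compositions make sense in the unbounded setting) and, above all, tracking the signs produced by the substitutions $B\rightsquigarrow z-B$ and $s\rightsquigarrow-s$ and by the adjoint identity for $(z-B)^{*-1}$. I would therefore pin the signs down by testing the final formula on the scalar model $\H=\Comp$, $B=b$, $u=w=1$, where $B_s$ acts as multiplication by $b+s$ and both sides can be read off explicitly; this one-line check fixes the correct placement of the $\pm s$ terms before committing to the general derivation.
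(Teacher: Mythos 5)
Your plan for the inversion formula is essentially the paper's own proof. For the ``only if'' direction the paper evaluates the identity $f=B_sB_s^{-1}f$ at the vector $f$ with $B_s^{-1}f=B^{-1}u$, arriving at $(1+s\<B^{-1}u,w\>)u\neq 0$; your observation that $B_s(B^{-1}u)=(1+s\<B^{-1}u,w\>)u$ exhibits a kernel vector when $1+s\<B^{-1}u,w\>=0$ is the contrapositive of the same computation. For the converse the paper merely says the right-hand side of \eqref{Btinv} ``can be checked directly''; your expansion via $B\,\D{B^{*-1}w}{B^{-1}u}=\D{B^{*-1}w}{u}$, $(\D{w}{u})B^{-1}=\D{B^{*-1}w}{u}$ and $(\D{w}{u})\D{B^{*-1}w}{B^{-1}u}=\<B^{-1}u,w\>\,\D{B^{*-1}w}{u}$ is exactly that check, correctly carried out, and your domain bookkeeping is fine since the rank-one part of $R$ maps into the span of $B^{-1}u\subset\dom B$.

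Where your caution about signs genuinely pays off is the ``Moreover'' part: your bookkeeping $z-B_s=(z-B)-s\,\D{w}{u}$, hence part one applied to $\tilde B=z-B$ with coefficient $-s$, is the right one, and pushed through it yields $\<(z-B_s)^{-1}\xi,\eta\>=\bigl(Q_{\xi\eta}-sQ_{\xi\eta}Q_{uw}+sQ_{\xi w}Q_{u\eta}\bigr)\big/\bigl(1-sQ_{uw}\bigr)$, i.e.\ \eqref{formula for Weyl general} with $s$ replaced by $-s$. Your proposed scalar test detects exactly this: for $\H=\Comp$, $B=b$, $u=w=\xi=\eta=1$ the left-hand side of \eqref{formula for Weyl general} is $(z-b-s)^{-1}$ while the printed right-hand side evaluates to $(z-b+s)^{-1}$. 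This is not a flaw in your argument but a sign slip in the lemma as printed: the paper's own proof expands $(z-B_s)^{-1}$ by applying \eqref{Btinv} to $z-B$ with parameter $+s$, silently treating $z-B_s$ as $(z-B)+s\,\D{w}{u}$. Note that in the only place the formula is used, the proof of Theorem \ref{Theorem 1}, it is invoked for $\<B_s^{-1}g,h\>$ with $B_s=(z-A)+s\,\D{w}{u}$ and $Q_{u,w}(z)=\<(z-A)^{-1}u,w\>$ --- precisely the plus-sign configuration for which the printed right-hand side is correct, the compensating minus sign sitting in $A_{s,t}=A-s\,\D{w}{u}-t\,\D{h}{g}$. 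So keep your derivation as it stands, and state the ``Moreover'' claim either for $B_s:=B-s\,\D{w}{u}$ or with $s\mapsto-s$ on the right-hand side; do not adjust your (correct) signs to match the printed formula.
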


\begin{proof}
Suppose $B_s=B+s\D{w}{u}$ is invertible and let $B_s^{-1}=A\in \B(\H)$ denote its inverse. Then,
$$
f=B_sAf=BAf+s\seq{Af,w}u,\quad f\in\H.
$$
In particular, for $f=A^{-1}B^{-1}u$ we have
$$
u+s\seq{B^{-1}u,w}u = A^{-1}B^{-1}u\neq 0,
$$
hence $1+s\seq{B^{-1}u,w}\neq0$. The other implication can be checked directly,  by showing that the operator on the right hand side of \eqref{Btinv} is the inverse of $B+s\D{w}{u}$.

Assuming $z\in \rho(B)$ and $1+sQ_{u,w}(z)=0$ and using (\ref{Btinv}) we get
\begin{eqnarray*}
&& \<(z-B_s)^{-1}\xi, \eta\> \\
&=& \left\<\left( (z-B)^{-1}-\frac{s}{1+s\<(z-B)^{-1}u,w\>}\D{(z-B)^{*-1}w}{(z-B)^{-1}u} \right)\xi,\eta \right\>\\
&=& \< (z-B)^{-1}\xi, \eta\> -\frac{s\<(z-B)^{-1}\xi,w\>\<(z-B)^{-1}u ,\eta\>}{1+s\<(z-B)^{-1}u,w\>} \\
 &=& Q_{\xi\eta} -\frac{sQ_{\xi w}Q_{u\eta}}{1+s Q_{uw}} = \frac{Q_{\xi\eta}+s Q_{\xi\eta}Q_{uw}- sQ_{\xi w}Q_{u\eta}}{1+s Q_{uw}}.
\end{eqnarray*}

\end{proof}

\subsection{The general perturbations of the form $A-s(\D{w}{u}) - t(\D hg)$}

The main object of our study is the sum of two rank-one deformations. 

\begin{thm}\label{Theorem 1} Let $A$ be a closed, densely defined operator on $\H$, $s, t\in \C$ and let
$$
A_{s,t} := A-s(\D{w}{u}) - t(\D hg), 
$$
for some nonzero vectors $u,w,g,h\in\H$, with $u\in\dom(A)$. 

\begin{enumerate}
\item[(i)] For $z\in\rho(A)\cap\rho(A_{s,0})$ we have that  $z\in\sigma(A_{s,t})$ if and only if
\begin{equation}\label{z in spectrum}
1+sQ_{uw}(z)+tQ_{gh}(z)+stQ_{gh}(z)Q_{uw}(z)-stQ_{gw}(z)Q_{uh}(z)=0.
\end{equation}

\item[(ii)] 
The Weyl function $Q^{s,t}_{u}(z):=\seq{(z-A_{s,t})^{-1} u,u}$ of the operator $A_{s,t}$

equals
\begin{equation}\label{Weyl function 1}
Q_{u}^{s,t}=\frac{Q_u+tQ_{u}Q_{gh}-tQ_{uh}Q_{gu}}{1+sQ_{uw}+tQ_{gh}+stQ_{gh}Q_{uw}-stQ_{gw}Q_{uh}}
\end{equation}
\end{enumerate}
\end{thm}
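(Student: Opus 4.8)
The plan is to realise the rank-two perturbation as an iterated rank-one perturbation and then apply Lemma~\ref{lem_inverse} twice. Set $A_{s,0}:=A-s(\D wu)$, so that $A_{s,t}=A_{s,0}-t(\D hg)$ is a rank-one perturbation of $A_{s,0}$. Since $z\in\rho(A)\cap\rho(A_{s,0})$ the resolvents $(z-A)^{-1}$ and $(z-A_{s,0})^{-1}$ both exist, so the Weyl functions of $A$, written $Q_{\xi\eta}$, and those of $A_{s,0}$, written $Q^{s,0}_{\xi\eta}(z):=\seq{(z-A_{s,0})^{-1}\xi,\eta}$ (with $Q^{s,0}_u:=Q^{s,0}_{uu}$), are all well defined; moreover, applying the invertibility criterion of Lemma~\ref{lem_inverse} to $z-A_{s,0}=(z-A)+s(\D wu)$ shows that $z\in\rho(A_{s,0})$ is equivalent to $1+sQ_{uw}(z)\ne0$. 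The whole argument then reduces to two mechanical steps: using the invertibility criterion and formula~\eqref{formula for Weyl general} for the outer perturbation, and expressing each $Q^{s,0}_{\xi\eta}$ through~\eqref{formula for Weyl general} in terms of the Weyl functions of $A$.

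For part~(i) I would apply the invertibility criterion of Lemma~\ref{lem_inverse} to the outer perturbation $z-A_{s,t}=(z-A_{s,0})+t(\D hg)$, with base operator $z-A_{s,0}$: thus $z\in\sigma(A_{s,t})$ if and only if $1+tQ^{s,0}_{gh}(z)=0$. It then remains to compute $Q^{s,0}_{gh}=\seq{(z-A_{s,0})^{-1}g,h}$ from~\eqref{formula for Weyl general} (with $\xi=g$, $\eta=h$), namely
\[
Q^{s,0}_{gh}=\frac{Q_{gh}+sQ_{gh}Q_{uw}-sQ_{gw}Q_{uh}}{1+sQ_{uw}}.
\]
Inserting this into $1+tQ^{s,0}_{gh}=0$ and clearing the factor $1+sQ_{uw}$, which is nonzero by the previous paragraph, produces exactly~\eqref{z in spectrum}.

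For part~(ii) I would again use~\eqref{formula for Weyl general}, now for the outer perturbation with $\xi=\eta=u$, to get
\[
Q_u^{s,t}=\frac{Q^{s,0}_u+tQ^{s,0}_uQ^{s,0}_{gh}-tQ^{s,0}_{uh}Q^{s,0}_{gu}}{1+tQ^{s,0}_{gh}},
\]
and then replace each inner Weyl function using~\eqref{formula for Weyl general}. The two ``diagonal'' ones collapse through a cancellation, giving $Q^{s,0}_u=Q_u/(1+sQ_{uw})$ and $Q^{s,0}_{uh}=Q_{uh}/(1+sQ_{uw})$, whereas $Q^{s,0}_{gu}$ and $Q^{s,0}_{gh}$ keep their full three-term numerators over $1+sQ_{uw}$. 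Writing everything over a common denominator, the outer denominator becomes $1+tQ^{s,0}_{gh}=\Delta/(1+sQ_{uw})$, where $\Delta$ denotes the left-hand side of~\eqref{z in spectrum}; the outer numerator acquires a factor $1/(1+sQ_{uw})^2$, and after the two $Q_{gw}$ cross-terms cancel its remaining polynomial part factors as $(1+sQ_{uw})(Q_u+tQ_uQ_{gh}-tQ_{uh}Q_{gu})$. Dividing numerator by denominator, all powers of $1+sQ_{uw}$ cancel and~\eqref{Weyl function 1} results.

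Conceptually the proof is light --- it is just a double application of the rank-one formula --- so the real work, and the only place where sign or bookkeeping slips can occur, is the algebraic reduction in part~(ii): tracking the powers of $1+sQ_{uw}$, spotting the cancellation of the $Q_{gw}$ cross-terms, and factoring $1+sQ_{uw}$ out of the numerator. A secondary point to monitor is the chain of nonvanishing conditions ($1+sQ_{uw}(z)\ne0$ for $z\in\rho(A_{s,0})$, and $1+tQ^{s,0}_{gh}(z)\ne0$, equivalently $\Delta\ne0$, for $z\in\rho(A_{s,t})$) that legitimises each use of~\eqref{formula for Weyl general}. As an independent check on part~(i), one may instead treat $z-A_{s,t}=(z-A)+s(\D wu)+t(\D hg)$ as a rank-two perturbation of $z-A$ and apply the matrix-determinant (Weinstein--Aronszajn) lemma; the resulting $2\times2$ determinant is precisely $\Delta$.
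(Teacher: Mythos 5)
Your proposal is correct and follows essentially the same route as the paper's proof: both realise $A_{s,t}$ as a rank-one perturbation of $A_{s,0}$, apply Lemma~\ref{lem_inverse} twice, and perform the identical algebraic reduction, including the collapse of $Q^{s,0}_{u}$ and $Q^{s,0}_{uh}$ to single-term fractions, the cancellation of the two $Q_{gw}$ cross-terms, and the factoring of $1+sQ_{uw}$ out of the numerator against the denominator $\Delta/(1+sQ_{uw})$. The only immaterial difference is presentational: you invoke \eqref{formula for Weyl general} for the outer perturbation using the Weyl functions of $A_{s,0}$, whereas the paper writes the outer step via the explicit inverse \eqref{Btinv} and then substitutes the same four inner Weyl functions.
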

\begin{proof}

Fix $z\in\rho(A)\cap \rho(A_{s,0})$ and let $B=z-A$. Set $B_s = B+s\D{w}{u}$, 
so that
$$
z-A_{s,t} =  B+s\D{w}{u} + t\D hg= B_s+ t\D hg.
$$
Using the first part of Lemma \ref{lem_inverse} we have that $B_s+ t\D hg$ is invertible if and only if $1+t\<B_s^{-1}g,h\>\neq 0$. 
Applying (\ref{formula for Weyl general}) of Lemma \ref{lem_inverse}, we get 
\begin{equation}\label{douzupelnienia}
\<B_s^{-1}g, h\>
 = \frac{Q_{gh}+s Q_{gh}Q_{uw}- sQ_{g w}Q_{uh}}{1+s Q_{uw}}.
\end{equation}
Observe that, by Lemma \ref{lem_inverse}, the assumption $z\in \rho(A_{s,0})$ is equivalent to $1+s Q_{uw}\neq 0$.  It follows, that $1+t\<B_s^{-1}g,h\>\neq 0$ if and only if 
\[
1+\frac{tQ_{gh}+st Q_{gh}Q_{uw}- stQ_{g w}Q_{uh}}{1+s Q_{uw}}\neq 0,
\]
from which one gets (\ref{z in spectrum}). 

(ii) For $z\in \rho(A_{s,0})$ the operator $B_s$ is invertible, so applying Lemma \ref{lem_inverse} we obtain
\begin{eqnarray*}
Q^{s,t}_{u} (z)
&=& \< (z-A_{s,t})^{-1}u,u\> =\<(B_s + t\D{h}{g})^{-1}u,u\> \\
&=& \< B_s^{-1}u,u\> -\frac{t}{1+t\<B_s^{-1}g,h\>}\<(\D{B_s^{*-1}h}{B_s^{-1}g})u,u\>\\
&=&  \< B_s^{-1}u,u\>-\frac{t\< u,B_s^{*-1}h\>\<B_s^{-1}g,u\>}{1+t\<B_s^{-1}g,h\>}\\
&=& \< B_s^{-1}u,u\>-\frac{t\< B_s^{-1}u,h\>\<B_s^{-1}g,u\>}{1+t\<B_s^{-1}g,h\>}.\\
\end{eqnarray*}
Assuming $z\in \rho(A)$ we have that $B=z-A$ invertible, so by (\ref{formula for Weyl general}) we get formulas
\begin{eqnarray*}
\< B_s^{-1}u,u\> = \frac{Q_u}{1+sQ_{uw}}, \quad
\<B_s^{-1}g,u\> = \frac{Q_{gu}+sQ_{gu}Q_{uw}-sQ_{gw}Q_{u}}{1+sQ_{uw}}, \\
\< B_s^{-1}u,h\>  = \frac{Q_{uh}}{1+sQ_{uw}}, \quad \<B_s^{-1}g,h\> = \frac{Q_{gh}+sQ_{gh}Q_{uw}-sQ_{gw}Q_{uh}}{1+sQ_{uw}}. \\
\end{eqnarray*}
Using these we get
\begin{eqnarray*}
Q^{t,s}_{u} (z)
&=& \frac{Q_{u}}{1+s Q_{uw}}-\frac{t\< B_s^{-1}u,h\>\<B_s^{-1}g,u\>}{1+t\<B_s^{-1}g,h\>}\\
&=&  \frac{Q_{u}}{1+s Q_{uw}}-\frac{ t\frac{Q_{uh}}{1+ s Q_{uw}}\cdot  \frac{Q_{gu}+s Q_{gu}Q_{uw} -sQ_{gw}Q_{u}}{1+s Q_{uw}}}{1+t\frac{Q_{gh}+s Q_{gh}Q_{uw}
-sQ_{gw}Q_{uh}}{1+s Q_{uw}} }\\
&=& \frac{1}{1+s Q_{uw}}
\left(Q_u-\frac{ tQ_{uh}[Q_{gu}+s Q_{gu}Q_{uw} - sQ_{gw}Q_{u}]}{1+sQ_{uw}+tQ_{gh}+stQ_{gh}Q_{uw}-stQ_{gw}Q_{uh}}\right)\\
\end{eqnarray*}
The numerator in the brackets becomes
\begin{eqnarray*}
Q_u+sQ_uQ_{uw}+tQ_{u}Q_{gh}+stQ_{u}Q_{gh}Q_{uw}-stQ_{u}Q_{gw}Q_{uh}-tQ_{uh}Q_{gu}-stQ_{uh}Q_{gu}Q_{uw}\\+stQ_{uh}Q_{gw}Q_{u} = Q_u+sQ_uQ_{uw}+tQ_{u}Q_{gh}+stQ_{u}Q_{gh}Q_{uw}-tQ_{uh}Q_{gu}-stQ_{uh}Q_{gu}Q_{uw}
\end{eqnarray*}
hence we obtain the formula
\begin{equation}\label{Weyl function general}
Q^{s,t}_{u} (z)= \frac{Q_u+sQ_uQ_{uw}+tQ_{u}Q_{gh}+stQ_{u}Q_{gh}Q_{uw}-tQ_{uh}Q_{gu}- stQ_{uh} Q_{gu}Q_{uw}}{[1+s Q_{uw}][1+sQ_{uw}+tQ_{gh}+stQ_{gh}Q_{uw}-stQ_{gw}Q_{uh}]}
\end{equation}
After factoring out $1+s Q_{uw}$ in the numerator, we get (\ref{Weyl function 1}).
\end{proof}



\subsection{Perturbations for $\{u,w\}=\{g,h\}$}


We consider now several instances of Theorem \ref{Theorem 1}.  We will always study two kinds of rank-two perturbations: the `antidiagonal' and `diagonal', given respectively by the following formulas 
\begin{equation}\label{tildehat}
\widetilde{A_{s,t}}= A- s(\D{w}{u}) -t(\D{u}{w}),\quad  \widehat{A_{s,t}}= A- s(\D{u}{u}) -t(\D{w}{w}).
\end{equation}
By $\widetilde{Q_{u}^{s,t}}$ and  $\widehat{Q_{u}^{s,t}}$ we denote the  Weyl function of $\widetilde{A_{s,t}}$ and $\widehat{A_{s,t}}$, respectively. Both tilde and hat convention will be used later on in Theorem \ref{thm_gen} in the special case $w=Au$.

\begin{cor}\label{coruAAu} Let $A$ be a closed, densely defined operator and let $u,w\in\H$. 
\begin{enumerate}
\item[(i)]
For the operator 
$\displaystyle \widetilde{A_{s,t}}:= A- s(\D{w}{u}) -t(\D{u}{w})$ with $s,t\in\Comp$, 
the following hold:
\begin{enumerate}
\item[(i.1)] If $z\in \rho(A)\cap \rho(\widetilde{A_{s,0}})$ then $z\in \sigma(\widetilde{A_{s,t}})$ if and only if
\[
1+s Q_{u,w}(z)+tQ_{w,u}(z)+st Q_{u,w}(z)Q_{w,u}(z) -stQ_{u}(z)Q_{w}(z) =0.
\]

\item[(i.2)]
The Weyl function $\widetilde{Q_{u}^{s,t}}(z):=\<(z-\widetilde{A_{s,t}})^{-1} u,u \>$ for the operator $\widetilde{A_{s,t}}$ equals
$$
\widetilde{Q_{u}^{s,t}}=\frac{Q_{u}}{1+s Q_{u,w}+tQ_{w,u}+st Q_{u,w}Q_{w,u}
-stQ_{u}Q_{w}}.
$$
\end{enumerate}
\item[(ii)]
For the operator 
$\displaystyle \widehat{A_{s,t}} := A- s(\D{u}{u}) - t(\D{w}{w})$ with $s,t\in\Comp$, 
the following hold:
\begin{enumerate}
\item[(ii.1)] If $z\in \rho(A)\cap \rho(\widehat{ A_{s,0}})$ then $z\in \sigma(\widehat{A_{s,t}})$ if and only if 
\[
 \Leftrightarrow \quad (1+sQ_u(z))(1+tQ_w(z))=stQ_{u,w}(z)Q_{w,u}(z).
\]
\item[(ii.2)]
The Weyl function $\widehat{Q}_{u}^{s,t}:=\<(z-\hat{A}_{s,t})^{-1} u,u \>$ for this operator
equals
\[
\widehat{Q_{u}^{s,t}}=\frac{ Q_{u}(1+tQ_{w})- tQ_{u,w}Q_{w,u}}{(1+sQ_u)(1+tQ_w)- stQ_{u,w}Q_{w,u}}.
\]
\end{enumerate}
\end{enumerate}
\end{cor}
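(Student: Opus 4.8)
The plan is to derive both parts of the corollary as direct specializations of Theorem \ref{Theorem 1}. Each of the operators $\widetilde{A_{s,t}}$ and $\widehat{A_{s,t}}$ is an instance of the general perturbation $A_{s,t} = A - s(\D{w}{u}) - t(\D hg)$ for a suitable identification of the auxiliary vectors $g,h$ with $u$ and $w$, so the only work is to perform these substitutions in \eqref{z in spectrum} and \eqref{Weyl function 1} and then simplify.

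For part~(i), the antidiagonal case, I would match the second term $-t(\D hg)$ of Theorem \ref{Theorem 1} against $-t(\D uw)$, which forces $g=w$ and $h=u$. Substituting into the spectral criterion \eqref{z in spectrum} and using $Q_{gh}=Q_{w,u}$, $Q_{gw}=Q_{w,w}=Q_w$ and $Q_{uh}=Q_{u,u}=Q_u$ yields (i.1) immediately. For the Weyl function, the same identification in the numerator of \eqref{Weyl function 1} gives $Q_u + tQ_uQ_{w,u} - tQ_uQ_{w,u}$, where the two $t$-terms cancel and leave $Q_u$; dividing by the denominator already computed in (i.1) produces (i.2). This cancellation is the one pleasant simplification in the computation.

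For part~(ii), the diagonal case, I would instead have the vector $w$ of Theorem \ref{Theorem 1} play the role of $u$ (matching $-s(\D wu)$ against $-s(\D uu)$) and set $g=h=w$ (matching $-t(\D hg)$ against $-t(\D ww)$). Under this identification $Q_{uw}\to Q_u$, $Q_{gh}\to Q_w$, $Q_{gw}\to Q_{w,u}$ and $Q_{uh}\to Q_{u,w}$, so \eqref{z in spectrum} becomes
\[
1 + sQ_u + tQ_w + stQ_uQ_w - stQ_{u,w}Q_{w,u} = 0.
\]
The key step is to recognize the first four terms as the product $(1+sQ_u)(1+tQ_w)$, which rewrites the condition in the factored form (ii.1). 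The Weyl function (ii.2) comes from the same substitution in \eqref{Weyl function 1}: the numerator groups as $Q_u(1+tQ_w) - tQ_{u,w}Q_{w,u}$, and the denominator is exactly the factored expression obtained for (ii.1).

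I do not expect any genuine obstacle beyond careful bookkeeping. Because the corollary reuses the letters $u,w$ both for the perturbing vectors and for the Weyl-function indices, whereas Theorem \ref{Theorem 1} uses four independent vectors $u,w,g,h$, the one thing to watch is performing the identification of $g,h$ with $\{u,w\}$ consistently and tracking which arguments of $Q$ coincide after specialization. Once the correct matchings are fixed, both parts reduce to the elementary algebra indicated above.
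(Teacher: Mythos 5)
Your proposal is correct and follows essentially the same route as the paper: both parts are obtained by specializing Theorem \ref{Theorem 1} with the identifications $g=w,\ h=u$ for the antidiagonal case and (theorem's) $w=u,\ g=h=w$ for the diagonal case, with the cancellation $tQ_uQ_{w,u}-tQ_uQ_{w,u}=0$ in (i.2) and the factorization $(1+sQ_u)(1+tQ_w)$ in (ii). The only cosmetic difference is that you substitute directly into \eqref{Weyl function 1}, while the paper works with the unfactored formula \eqref{Weyl function general} and exhibits the common factor $1+sQ_{uw}$ explicitly; the computations are equivalent.
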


\begin{proof}
By substituting $g:=w$, $h:=u$ in Theorem \ref{Theorem 1} we get (i.1). For (i.2) observe, that, with this substitution, the numerator in (\ref{Weyl function general}) becomes
\begin{eqnarray*}
Q_u+sQ_uQ_{uw}+tQ_{u}Q_{wu}+stQ_{u}Q_{wu}Q_{uw}-tQ_{u}Q_{wu}- stQ_{u} Q_{wu}Q_{uw} &=& \\ 
Q_u(1+sQ_{uw})+tQ_{u}Q_{wu}(1++sQ_{uw})-tQ_{u}Q_{wu}(1+ sQ_{uw}), 
\end{eqnarray*}
from which (i.2) follows.

In a similar way, by putting $w=u$ in Theorem \ref{Theorem 1}, and then substituting $g=h=w$, we get (ii.1). For (ii.2) observe, that, with this substitution, the numerator in (\ref{Weyl function general}) becomes
\begin{eqnarray*}
Q_u+sQ_uQ_{u}+tQ_{u}Q_{w}+stQ_{u}Q_{w}Q_{u}-tQ_{uw}Q_{wu}- stQ_{uw} Q_{wu}Q_{u} &=& \\ 
(1+sQ_u)(Q_u+tQ_{u}Q_{w}-tQ_{uw}Q_{wu}), 
\end{eqnarray*}
while the denominator becomes 
\begin{eqnarray*}
(1+s Q_{u})(1+sQ_{u}+tQ_{w}+stQ_{w}Q_{u}-stQ_{wu}Q_{uw}) &=&\\ 
(1+s Q_{u})(1+tQ_w)-stQ_{wu}Q_{uw},
\end{eqnarray*}
from which (ii.2) follows.
\end{proof}


\subsection{Perturbations for $w=Au$}

In case  $w=Au$ the formulas can be simplified, according to the following lemma.

\begin{lemma}\label{QAuu}
Let $A$ be a closed and densely defined operator with nonempty resolvent set and let $u\in\dom(A)$, $\norm u=1$, $m=\<u,Au\>$. Then with $Q_{u,w}(z)=\seq{(z-A)^{-1}u,w}$ one has 
\begin{equation}\label{QuAu}
Q_{Au,u}(z)=zQ_{u}(z)-1,\quad z\in\rho(A),
\end{equation}\label{Qef}
\begin{equation}
Q_{u,Au}(z)=\frac{1}z(m+Q_{Au}(z)),\quad z\in\rho(A).
\end{equation}
\end{lemma}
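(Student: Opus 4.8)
The plan is to reduce both identities to a single resolvent manipulation. Since $u\in\dom(A)$, for every $z\in\rho(A)$ we may write $Au=zu-(z-A)u$, and applying $(z-A)^{-1}$ yields the key identity
\[
(z-A)^{-1}Au = z\,(z-A)^{-1}u - u .
\]
This is the only nontrivial observation; everything after it is bookkeeping with the inner product $\seq{\cdot,\cdot}$, which is linear in its first slot.

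For the first identity \eqref{QuAu} I would pair the key identity with $u$ in the second slot:
\[
Q_{Au,u}(z)=\seq{(z-A)^{-1}Au,\,u}=z\seq{(z-A)^{-1}u,\,u}-\seq{u,u}=z\,Q_u(z)-\norm u^2,
\]
and since $\norm u=1$ this is exactly $z\,Q_u(z)-1$.

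For the second identity, rather than computing $Q_{u,Au}(z)$ head on I would instead expand $Q_{Au}(z)=\seq{(z-A)^{-1}Au,\,Au}$, substituting the key identity into the \emph{first} slot:
\[
Q_{Au}(z)=\seq{z\,(z-A)^{-1}u-u,\,Au}=z\,Q_{u,Au}(z)-\seq{u,Au}=z\,Q_{u,Au}(z)-m .
\]
Solving for $Q_{u,Au}(z)$ gives $Q_{u,Au}(z)=\tfrac1z\bigl(m+Q_{Au}(z)\bigr)$, which is the claimed formula.

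The argument is routine, so I do not expect a genuine obstacle; the only points requiring care are bookkeeping ones. First, the hypothesis $u\in\dom(A)$ is precisely what legitimizes writing $Au=zu-(z-A)u$ and then applying $(z-A)^{-1}$; without it the manipulation is meaningless. Second, the division by $z$ in the second formula implicitly requires $z\neq0$, so for the value $z=0$ (which may belong to $\rho(A)$) the relation should be read in its cleared form $z\,Q_{u,Au}(z)=m+Q_{Au}(z)$. Beyond these remarks and keeping track of the linearity convention for $\seq{\cdot,\cdot}$, there is nothing further to verify.
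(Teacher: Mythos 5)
Your proof is correct and follows essentially the same route as the paper: both rest on the resolvent identity $(z-A)^{-1}Au = z(z-A)^{-1}u - u$, paired with $u$ for the first formula, and the paper's ``transforming similarly'' for the second formula is exactly your expansion of $Q_{Au}(z)$ followed by solving for $Q_{u,Au}(z)$. Your observation that the second formula must be read in the cleared form $zQ_{u,Au}(z)=m+Q_{Au}(z)$ when $0\in\rho(A)$ is a small point of care that the paper passes over silently.
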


\begin{proof}
To see the first equality observe that
\begin{eqnarray*}
Q_{Au,u}(z)&=& \<(z-A)^{-1}Au,u\>\\
&=& \<(z-A)^{-1}(A-z)e+z(z-A)^{-1}u,u\> \\
&=& -1+zQ_{u} (z).
\end{eqnarray*}
Transforming similarly $Q_{u,Au}(z)$ one obtains the second equality.
\end{proof}

Using this Lemma we can describe the rank-two deformations given in \eqref{tildehat} in the case $w=Au$. 
\begin{thm} \label{thm_gen}
Let $A$ be a closed, densely defined operator and let $u\in\dom(A)$, $\norm u=1$, $m=\<u,Au\>$.  
\begin{enumerate}
\item[(i)]
For the operator 
$\displaystyle \widetilde{A_{s,t}}:= A- s(\D{Au}{u}) -t(\D{u}{Au})$ with $s,t\in\Comp$, 
the following hold:
\begin{enumerate}
\item[(i.1)] If $z\in \rho(A)\cap \rho(\widetilde{A_{s,0}})$ then $z\in \sigma(\widetilde{A_{s,t}})$ if and only if
\[
(1-t)\left[1+\frac{s}{z}(m+Q_{Au})\right]+t(z+sm)Q_u(z)=0
\]
\item[(i.2)]
The Weyl function $\widetilde{Q_{u}^{s,t}}:=\<(z-\widetilde{A_{s,t}})^{-1} u,u \>$ 
is given by
\begin{equation} 
\frac{1}{\widetilde{Q_{u}^{s,t}}(z)}=\frac{1-t}{Q_{u}}\left(1+\frac{s}{z}(m+Q_{Au}(z)) \right) +t(z+sm)
\end{equation}

\end{enumerate}
\item[(ii)]
For the operator 
$\displaystyle \widehat{A_{s,t}} = A- s(\D{u}{u}) - t(\D{Au}{Au})$ with $s,t\in\Comp$, 
the following hold:
\begin{enumerate}
\item[(ii.1)] If $z\in \rho(A)\cap \rho(A_{s,0})$ then $z\in \sigma(\widehat{A_{s,t}})$ if and only if
\[
(z+stm) + sz(1-tm)Q_u + t(z+s)Q_{Au}=0.
\]
\item[(ii.2)]
The Weyl function $\widehat{Q_{u}^{s,t}}:=\<(z-\widehat{A_{s,t}})^{-1} u,u \>$ 
is given by
\begin{equation}
\frac{1}{\widehat{Q_{u}^{s,t}}}=s+\frac{1+tQ_{Au}}{(1-tm)Q_u(z)+ \frac{t}{z}(m+Q_{Au})}
\end{equation}
\end{enumerate}
\end{enumerate}
\end{thm}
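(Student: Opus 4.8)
The plan is to obtain all four formulas directly from Corollary \ref{coruAAu} by specializing $w=Au$ and then rewriting the two off-diagonal Weyl functions via Lemma \ref{QAuu}. No new operator-theoretic input is required: once the substitution is made, the statement reduces to algebraic simplification, the only subtlety being to keep track of which bilinear $Q$-terms cancel. Throughout I set $w=Au$, so that $Q_w$ becomes $Q_{Au}$, and I use the identities from Lemma \ref{QAuu},
\[
Q_{Au,u}(z)=zQ_u(z)-1,\qquad Q_{u,Au}(z)=\tfrac1z\bigl(m+Q_{Au}(z)\bigr),
\]
valid for $z\in\rho(A)$, $z\neq 0$.

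For the antidiagonal operator $\widetilde{A_{s,t}}$ I would start from part (i.1) of Corollary \ref{coruAAu}, which reads $1+sQ_{u,Au}+tQ_{Au,u}+st\,Q_{u,Au}Q_{Au,u}-st\,Q_uQ_{Au}=0$ after substitution. Inserting the two identities and expanding the product $Q_{u,Au}Q_{Au,u}=\tfrac1z(m+Q_{Au})(zQ_u-1)$ produces a term $st\,Q_uQ_{Au}$ that cancels the explicit $-st\,Q_uQ_{Au}$; collecting what remains gives $(1-t)\bigl[1+\tfrac{s}{z}(m+Q_{Au})\bigr]+t(z+sm)Q_u=0$, which is (i.1). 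Since the denominator of $\widetilde{Q_u^{s,t}}$ in part (i.2) of Corollary \ref{coruAAu} is exactly this same expression and its numerator is $Q_u$, dividing through by $Q_u$ yields the reciprocal formula of (i.2) at once.

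For the diagonal operator $\widehat{A_{s,t}}$ I would feed the same substitutions into parts (ii.1)--(ii.2) of Corollary \ref{coruAAu}. Starting from the spectral condition $(1+sQ_u)(1+tQ_{Au})=st\,Q_{u,Au}Q_{Au,u}$, I clear the factor $1/z$ by multiplying through by $z$ and expand both sides; the degree-two terms $stz\,Q_uQ_{Au}$ on the two sides cancel, leaving $(z+stm)+sz(1-tm)Q_u+t(z+s)Q_{Au}=0$, which is (ii.1). For (ii.2) I would note the structural identity that the denominator in the Corollary's expression for $\widehat{Q_u^{s,t}}$ equals $(1+tQ_{Au})+s\cdot(\text{numerator})$, whence $1/\widehat{Q_u^{s,t}}=s+(1+tQ_{Au})/(\text{numerator})$; simplifying the numerator $Q_u(1+tQ_{Au})-t\,Q_{u,Au}Q_{Au,u}$ with the identities above (here the $tQ_uQ_{Au}$ cross terms cancel) collapses it to $(1-tm)Q_u+\tfrac{t}{z}(m+Q_{Au})$, giving the stated form.

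Since everything is mechanical substitution, there is no genuine obstacle; the only point demanding care is the bookkeeping of the bilinear $Q$-terms, because it is precisely the cancellation of the $Q_uQ_{Au}$ (respectively $zQ_uQ_{Au}$) cross terms that collapses each four-term expression into the compact shape claimed. I would also carry along the standing nonvanishing and domain hypotheses inherited from Corollary \ref{coruAAu} and Lemma \ref{QAuu}, namely $z\in\rho(A)$ together with the relevant invertibility of $\widetilde{A_{s,0}}$ (resp.\ $\widehat{A_{s,0}}$) and $z\neq0$ wherever division by $z$ is performed.
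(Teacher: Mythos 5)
Your proposal is correct and follows essentially the same route as the paper's own proof: specialize Corollary \ref{coruAAu} to $w=Au$, substitute the identities $Q_{Au,u}(z)=zQ_u(z)-1$ and $Q_{u,Au}(z)=\frac1z\bigl(m+Q_{Au}(z)\bigr)$ from Lemma \ref{QAuu}, and track the cancellation of the $Q_uQ_{Au}$ cross terms, which is exactly the algebra the paper carries out. Your explicit attention to the hypothesis $z\neq 0$ where division by $z$ occurs is a small point of added care, but it does not constitute a different argument.
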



\begin{proof}
(i) First observe that, by Lemma \ref{QAuu}, we have 
\[
1+s Q_{u,Au}+tQ_{Au,u}+st Q_{u,Au}Q_{Au,u}-stQ_{u}Q_{Au} =
\]
\[
= 1+\frac{s}{z}(m+Q_{Au})+t(zQ_{u}-1)+ \frac{st}{z}(zQ_u-1)(m+Q_{Au})-stQ_uQ_{Au} =
\]
\[
=(1-t)\left[1+\frac{s}{z}(m+Q_{Au})\right]+t(z+sm)Q_u. 
\]
Hence, by Corollary \ref{coruAAu}(i.1) applied to $w:=Au$ we get (i.1). Furthermore, 
as 
\[
\widetilde{Q_{u}^{s,t}}=\frac{Q_{u}}{1+s Q_{u,Au}+tQ_{Au,u}+st Q_{u,Au}Q_{Au,u}-stQ_{u}Q_{Au}}
\]
formula (i.2) follows. 

(ii) Similarly, observe that 
\[
Q_{u}(1+tQ_{Au})- tQ_{u,Au}Q_{Au,u} = (1-tm)Q_u+\frac{t}{z}(m+Q_{Au}),
\]
which together with Corollary \ref{coruAAu}(i.2) applied to $w:=Au$  shows (ii.1). Furthermore, as 
\[
\frac{1}{\widehat{Q_{u}^{s,t}}}=s+\frac{1+tQ_{Au}}{Q_{u}(1+tQ_{Au})- tQ_{u,Au}Q_{Au,u}}
\]
formula (ii.2) follows. 
\end{proof}


\subsection{Deformations of self-adjoint operators} In the self-adjoint case $A=A^*$ we can also simplify the formula for $Q_{Au}(z)$. 

\begin{lemma}\label{QAA} Let $A=A^*$ be a closed, densely defined operator, $u\in \dom(A)$, $\norm u=1$, $m=\<u,Au\>=\<Au,u\>$ and let
$Q_{u}(z)$ denote the Weyl function of $A$ with respect to $u$. Then
\begin{eqnarray*}
Q_{Au}(z) =  z^2Q_{u}(z) - z - m.
 \end{eqnarray*}
\end{lemma}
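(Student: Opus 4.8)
The plan is to compute $Q_{Au}(z) = \langle (z-A)^{-1}Au, Au\rangle$ by pushing one factor of $A$ across the resolvent at a time, exactly in the spirit of Lemma \ref{QAuu}, but now exploiting self-adjointness to handle \emph{both} copies of $Au$. The starting point is the algebraic identity $A = z - (z-A)$ on $\dom(A)$, so that $(z-A)^{-1}Au = z(z-A)^{-1}u - u$, which is just the computation behind \eqref{QuAu}.

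First I would use self-adjointness to move the second $Au$ to the other side of the inner product: since $A=A^*$, one has $\langle (z-A)^{-1}Au, Au\rangle = \langle A(z-A)^{-1}Au, u\rangle$, and because $A$ and $(z-A)^{-1}$ commute this equals $\langle (z-A)^{-1}A(Au), u\rangle = \langle (z-A)^{-1}(A^2 u), u\rangle$. Then I would rewrite $A^2u = (z-(z-A))A u = zAu - (z-A)Au$, apply the resolvent, and reduce to expressions involving $Q_{Au,u}(z) = \langle (z-A)^{-1}Au,u\rangle$ and $\langle Au, u\rangle = m$. Concretely the chain becomes $z\langle (z-A)^{-1}Au,u\rangle - \langle Au,u\rangle = zQ_{Au,u}(z) - m$, and substituting $Q_{Au,u}(z)=zQ_u(z)-1$ from \eqref{QuAu} yields $z(zQ_u(z)-1) - m = z^2 Q_u(z) - z - m$.

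Alternatively, and perhaps more transparently, I would avoid commuting operators and instead apply the substitution $(z-A)^{-1}Au = z(z-A)^{-1}u - u$ to one argument, giving
\[
Q_{Au}(z) = \langle z(z-A)^{-1}u - u,\, Au\rangle = z\,Q_{u,Au}(z) - \langle u, Au\rangle = z\,Q_{u,Au}(z) - m,
\]
where I use $\norm u=1$ and $\langle u,Au\rangle = m$. Then I would invoke the second formula of Lemma \ref{QAuu}, namely $Q_{u,Au}(z) = \tfrac1z(m + Q_{Au}(z))$, substitute it in, and solve the resulting linear equation for $Q_{Au}(z)$; this route loops back on itself, so I would prefer the direct double-reduction above. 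The cleanest presentation processes the two factors of $A$ one at a time against a single resolvent and reads off the answer without solving anything.

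The only place requiring care is the \emph{domain} bookkeeping: the manipulations silently use $u\in\dom(A)$, $Au\in\dom(A)$ (equivalently $u\in\dom(A^2)$), and the fact that $(z-A)^{-1}$ maps into $\dom(A)$ so that $A(z-A)^{-1}=(z-A)^{-1}A$ holds on the appropriate domain. In the matrix/bounded case this is automatic, but for the genuinely unbounded self-adjoint case one should note that the identities hold on the dense subspace where everything is defined and extend by the hypothesis $u\in\dom(A)$ together with the spectral calculus, which guarantees $(z-A)^{-1}Au$ is well defined for $z\in\rho(A)$. I expect this domain justification, rather than the algebra, to be the main (and only real) obstacle.
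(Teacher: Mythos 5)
Your main route reaches the right formula and is, in spirit, exactly the paper's proof: move one copy of $Au$ across the inner product using $A=A^*$, then use the resolvent identity and \eqref{QuAu}. But there is one genuine flaw in its execution. The commutation step, from $\langle A(z-A)^{-1}Au,\,u\rangle$ to $\langle (z-A)^{-1}A^2u,\,u\rangle$, requires $Au\in\dom(A)$, i.e.\ $u\in\dom(A^2)$ --- an assumption the lemma does not make. Your closing paragraph flags the domain issue but misdiagnoses it: you claim the hypothesis $u\in\dom(A)$ ``together with the spectral calculus'' suffices, yet $u\in\dom(A)$ only makes $(z-A)^{-1}Au$ well defined; it does nothing for $(z-A)^{-1}A^2u$, which may be literally meaningless under the stated hypotheses. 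One could patch this by approximating $u$ by vectors in $\dom(A^2)$ (a core for a self-adjoint $A$) and passing to the limit, but no such argument is needed. The paper simply never commutes: it decomposes $Au=(A-z)u+zu$ \emph{inside} the expression $A(z-A)^{-1}Au$, giving
\[
A(z-A)^{-1}Au \;=\; A(z-A)^{-1}(A-z)u + zA(z-A)^{-1}u \;=\; -Au + zA(z-A)^{-1}u,
\]
in which every term is defined for $u\in\dom(A)$ alone, since the resolvent maps $\H$ into $\dom(A)$ and hence $A(z-A)^{-1}$ is defined on all of $\H$. Pairing with $u$ and using $A(z-A)^{-1}u=z(z-A)^{-1}u-u$ yields $-m+z\bigl(zQ_u(z)-1\bigr)$, which is the claim.

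So the fix is a reordering of your own steps, not a new idea: perform the substitution $Au=zu-(z-A)u$ \emph{before} any attempt to slide $A$ past the resolvent, and the extra hypothesis $u\in\dom(A^2)$ that you (rightly) worried about is never invoked. Your observation that the alternative route via the second formula of Lemma \ref{QAuu}, $Q_{u,Au}(z)=\frac1z\bigl(m+Q_{Au}(z)\bigr)$, is circular is correct --- substituting it into $Q_{Au}(z)=zQ_{u,Au}(z)-m$ returns a tautology --- so discarding that route was the right call.
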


\begin{proof} By simple computations we get 
\begin{eqnarray*}
Q_{Au}(z) &=& \<(z-A)^{-1} Au,Au   \>\\
&=& \< A (z-A)^{-1} Au,u\>\\
&=& \< A(z-A)^{-1}(A-z)u,u\>+z\<A(z-A)^{-1}u,u\>\\
&=& -m+ z(zQ_{u}(z)-1).
\end{eqnarray*}
\end{proof}
Substituting  Lemma \ref{QAA} in  Theorem \ref{thm_gen} we get the following result. 
\begin{thm} \label{thm_sa}
Let $A=A^*$ be a closed, densely defined operator on a Hilbert space $\H$,  $u\in \dom(A)\subset \H$ with $\|u\|=1$, $m=\<u,Au\>$, and let $Q_{u}(z)$ denote the Weyl function of $A$ with respect to $u$. 
\begin{enumerate}
\item[(i)] 
For the operator 
$\displaystyle \widetilde{A_{s,t}}:= A- s(\D{Au}{u}) -t(\D{u}{Au})$ with $s,t\in\Comp$, 
the following hold:
\begin{enumerate}
\item[(i.1)] If $z\in \rho(A)\cap \rho(\widetilde{A_{s,0}})$ then $z\in \sigma(\widetilde{A_{s,t}})$ if and only if
\[
(1-s)(1-t) + [z(s-st+t)+stm]Q_u(z)=0.
\]
\item[(i.2)]
The Weyl function $\widetilde{Q_{u}^{s,t}}(z):=\<(z-\widetilde{A_{s,t}})^{-1} u,u \>$ 
is given by
\begin{equation} \label{eq_G_self_adjoint}
\frac{1}{\widetilde{Q^{s,t}_{u}} (z)} =  \frac{(1- s)(1-t)}{Q_{u}(z)} +
(s-st+t)z +stm;
\end{equation}
\end{enumerate}
\item[(ii)]
For the operator 
$\displaystyle \widehat{A_{s,t}} = A- s(\D{u}{u}) - t(\D{Au}{Au})$ with $s,t\in\Comp$, 
the following hold:
\begin{enumerate}
\item[(ii.1)] If $z\in \rho(A)\cap \rho(\widehat{A_{s,0}})$ then $z\in \sigma(\widehat{A_{s,t}})$ if and only if
\[
(1-st-tz-tm)+(s-stm+stz+tz^2)Q_u(z)=0.
\]
\item[(ii.2)]
The Weyl function $\widehat{Q_{u}^{s,t}}(z):=\<(z-\widehat{A_{s,t}})^{-1} u,u \>$ 
is given by
\begin{equation} \label{eq_G_self_adjointD}
\frac{1}{\widehat{Q_{u}^{s,t}}(z)} = s+\frac{1+t(z^2Q_{u}(z) -m-z)}{(1-tm+tz)Q_{u}(z)-t}.
\end{equation}
\end{enumerate}
\end{enumerate}
\end{thm}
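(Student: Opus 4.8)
The plan is to derive Theorem \ref{thm_sa} as a direct specialization of Theorem \ref{thm_gen} by substituting the self-adjoint simplification from Lemma \ref{QAA}, namely $Q_{Au}(z)=z^2Q_u(z)-z-m$, into the four formulas (i.1), (i.2), (ii.1), (ii.2) of the latter. Since Theorem \ref{thm_gen} already expresses everything for the case $w=Au$ in terms of $Q_u$, $Q_{Au}$, $m$, and $z$, the only remaining work is algebraic: eliminate $Q_{Au}$ in favor of $Q_u$. I would carry out the four substitutions in order, treating (i) and (ii) as two independent computations.

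For part (i), I would start from the spectral condition in Theorem \ref{thm_gen}(i.1), namely $(1-t)\left[1+\frac{s}{z}(m+Q_{Au})\right]+t(z+sm)Q_u=0$. The key step is computing $m+Q_{Au}=m+z^2Q_u-z-m=z^2Q_u-z=z(zQ_u-1)$, so that $\frac{s}{z}(m+Q_{Au})=s(zQ_u-1)$. Plugging this in gives $(1-t)\left[1+s(zQ_u-1)\right]+t(z+sm)Q_u$, and after expanding and collecting the constant term and the coefficient of $Q_u$ separately, I expect the constant term to collapse to $(1-t)(1-s)$ and the $Q_u$-coefficient to $z(s-st+t)+stm$, yielding (i.1). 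For (i.2), the denominator of $\widetilde{Q_u^{s,t}}$ is exactly the left-hand side of (i.1) rewritten, so dividing through by $Q_u$ reproduces \eqref{eq_G_self_adjoint} directly; the only care needed is to match the $\frac{(1-s)(1-t)}{Q_u}$ term against $\frac{1-t}{Q_u}(1+s(zQ_u-1))$, whose constant-in-$Q_u$ piece is $(1-t)(1-s)/Q_u$ and whose $Q_u$-proportional piece contributes $(1-t)sz$ to the polynomial part.

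For part (ii), I would substitute into Theorem \ref{thm_gen}(ii.1), $(z+stm)+sz(1-tm)Q_u+t(z+s)Q_{Au}=0$, using $t(z+s)Q_{Au}=t(z+s)(z^2Q_u-z-m)$. Collecting the constant term gives $(z+stm)-t(z+s)(z+m)$ and the $Q_u$-coefficient gives $sz(1-tm)+t(z+s)z^2$. After expansion I expect these to simplify to the stated $(1-st-tz-tm)$ and $(s-stm+stz+tz^2)$ respectively, though verifying the cancellations (in particular the disappearance of the $z^2$, $zm$ cross terms and the emergence of a clean common factor) is where I would be most careful. For (ii.2), I would substitute $Q_{Au}=z^2Q_u-z-m$ directly into the expression $\frac{1}{\widehat{Q_u^{s,t}}}=s+\frac{1+tQ_{Au}}{(1-tm)Q_u+\frac{t}{z}(m+Q_{Au})}$ from Theorem \ref{thm_gen}(ii.2); the numerator becomes $1+t(z^2Q_u-z-m)$ immediately, and the denominator simplifies using $\frac{t}{z}(m+Q_{Au})=t(zQ_u-1)$ so that $(1-tm)Q_u+t(zQ_u-1)=(1-tm+tz)Q_u-t$, reproducing \eqref{eq_G_self_adjointD}.

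The main obstacle is purely bookkeeping: keeping track of the many $stm$, $stz$, and $tz^2$ terms through the expansions of (i.1) and especially (ii.1), where several quadratic-in-$z$ and $m$-dependent cross terms must cancel to produce the compact final coefficients. There is no conceptual difficulty — every required identity is supplied by Lemma \ref{QAA} and Theorem \ref{thm_gen} — so the proof is a verification that the self-adjoint substitution collapses the general $w=Au$ formulas into the advertised polynomial forms.
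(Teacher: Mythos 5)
Your proposal is correct and coincides with the paper's own proof, which simply applies Theorem \ref{thm_gen} together with Lemma \ref{QAA} (noting $m\in\Real$ since $A=A^*$) and leaves precisely the algebraic eliminations you carry out — including the common factor $z$ that must be divided out in (ii.1) — to the reader. Nothing further is needed.
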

\begin{proof}
Note that $m\in\Real$, due to $A=A^*$. Applying   Theorem \ref{thm_gen}(i) and Lemma \ref{QAA} we get the conclusion (i). Similarly, applying  Theorem \ref{thm_gen}(ii) and Lemma \ref{QAA} we get (ii).
\end{proof}


\begin{rem}
Note that the only  property  the inner product $\seq{\cdot,\cdot}$ that was used in  Lemma \ref{lem_inverse}, Theorem \ref{Theorem 1}, Corollary \ref{coruAAu}, Lemma \ref{QAuu}, Theorem \ref{thm_gen}   was sesquilinearity. Consequently, the definite inner product can be replaced everywhere by a form
$\seq{Hx,y}$, where $H\in B(\mathcal{H})$ and $H=H^*$  and the Weyl function can be replaced by the $H$-Weyl function
$$
Q^H_u(z) := \seq{H(z-A)^{-1}u,u}.
$$
Then,  Lemma \ref{QAA} and Theorem \ref{thm_sa} also follow under the assumption that $HA=A^*H$.
\end{rem}

\begin{rem}
The assumption that $z\in\rho(A_{s,0})$ in Theorem \ref{Theorem 1}, Corollary \ref{coruAAu}, Theorem \ref{thm_gen} and Theorem \ref{thm_sa} seems to be unavoidable in the operator case, though we are not able to give any example.
However, in the matrix case, thanks to the existence of the characteristic polynomial, this assumption may be simply removed, cf. Theorem  \ref{matrixcase}(a) below.

\end{rem}

\section{Application: perturbations  of spectra of matrices}
In this section we will deal with $\mathcal{H}=\Comp^n$ endowed with the standard inner product. Therefore, we will write $uw^*$
  instead of   $\D w u$, furthermore,
 $$
Q_{u,w}(z)=w^*(z-A)^{-1}u.
$$
By $\norm{u}$ and $\norm{A}$ we mean the euclidean norm of a vector $u$ and the induced matrix norm of the matrix $A$, respectively. 
By saying that a a statement holds for generic vectors $u_1,\dots,u_k,v_1^*,\dots,v_l^* \in\Comp^n$ we mean that there exists a nonzero polynomial of $(k+l)n$ variables such that the set of all (coordinates of) vectors $u_1,\dots,u_k,v_1^*,\dots,v_l^* $ for which the statement does not hold is a subset of a zero set of a nonzero polynomial in $(k+l)n$ variables.

\subsection{Parametric rank two perturbations and large scale limits of eigenvalues}
In this subsection  we show how the developed the present paper  techniques may be used to reveal the spectrum of rank two  perturbation of a matrix.
In particular, we will be interested in large parameter limits of the spectrum. We extend here some ideas from \cite{RW12} for finding the limits of rank one perturbations to the rank two case.
However, we will refrain from investigating the generic Jordan structure of rank two perturbations, as this problem was addressed in the paper \cite{Batzkeetal}. 

\begin{thm} \label{matrixcase} Let $A\in\Comp^{n\times n}$, $n\geq 2$, then the following statements hold.
\begin{itemize}
\item[(a)]
For all $u,w,g,h\in\Comp^n$  
the characteristic polynomial of $A-suw^* - t gh ^*$ $(s,t\in\Comp)$ equals
$$
\det(z- A) R_{s,t}(z),
$$
where
$$
R_{s,t}(z)=1+s Q_{u,w}(z)+tQ_{g,h}(z)+st Q_{u,w}(z)Q_{g,h}(z) -stQ_{g,w}(z)Q_{u,h}(z).
$$
\item[(b)] For generic $u,w^*,g,h^*\in\Comp^n$ the function
$$
q(z)=\det(z- A)(Q_{u,w}(z)Q_{g,h}(z) -Q_{g,w}(z)Q_{u,h}(z))
$$
is a polynomial of degree $n-2$ with simple roots only.
\item[(c)] For generic $u,w^*,g,h^*\in\Comp^n$ and all $\alpha,\beta\in\Comp\setminus\set0$
if $\Real\ni r\to+\infty$ then two eigenvalues of $A-(\alpha r) uw^* - (\beta r) gh ^*$ converge to infinity  as
$r\lambda_i+o(r)$, where $\lambda_i$ $(i=1,2)$ are eigenvalues of the matrix $\alpha uw^*+\beta gh^*$.
Furthermore, $n-2$ eigenvalues    converge to the zeros of the polynomial
$q(z)$. 
\end{itemize}
\end{thm}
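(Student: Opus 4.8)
The plan for (a) is to view $A-suw^*-tgh^*$ as a rank-two update $A-UDV^*$, with $U=[\,u\ g\,]$ and $V=[\,w\ h\,]$ the $n\times2$ matrices having the indicated columns and $D=\operatorname{diag}(s,t)$. Setting $B=zI-A$ and invoking the matrix determinant lemma $\det(B+UDV^*)=\det B\cdot\det(I_2+DV^*B^{-1}U)$ (valid for $z\in\rho(A)$), I would evaluate the $2\times2$ matrix $V^*B^{-1}U$ entrywise: by the definition of the Weyl functions its entries are $Q_{u,w},Q_{g,w},Q_{u,h},Q_{g,h}$, and a direct expansion of $\det(I_2+DV^*B^{-1}U)$ returns precisely $R_{s,t}(z)$. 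This yields the identity on the cofinite set $\rho(A)$; since the characteristic polynomial is a genuine polynomial agreeing there with the rational function $\det(zI-A)R_{s,t}(z)$, the two coincide identically, which removes the auxiliary hypothesis $z\in\rho(A_{s,0})$ of Theorem~\ref{Theorem 1}. Equivalently, both sides equal the manifestly polynomial border determinant $\det\begin{pmatrix} zI-A & U\\ -DV^* & I_2\end{pmatrix}$.

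For (b) I would first note, reading off (a), that $q(z)$ is the coefficient of $st$ in the characteristic polynomial of $A-suw^*-tgh^*$; as the latter is a polynomial in $z,s,t$ (its entries being affine in $s,t$), this shows $q\in\Comp[z]$ with no cancellation to check. For the degree, put $M(z)=V^*(zI-A)^{-1}U$, so that $q=\det(zI-A)\det M$ and $\det M=Q_{u,w}Q_{g,h}-Q_{g,w}Q_{u,h}$. The Neumann expansion $(zI-A)^{-1}=z^{-1}I+z^{-2}A+\cdots$ gives $M(z)=z^{-1}V^*U+O(z^{-2})$, hence $\det M(z)=z^{-2}\det(V^*U)+O(z^{-3})$ and $q(z)=z^{n-2}\det(V^*U)+O(z^{n-3})$. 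The leading coefficient $\det(V^*U)=(w^*u)(h^*g)-(w^*g)(h^*u)$ is a nonzero polynomial in the coordinates of the vectors, so $\deg q=n-2$ off a proper subvariety.

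The step I expect to be the main obstacle is the simplicity of the $n-2$ roots. Here I would rewrite $q$ as the Rosenbrock determinant $q(z)=\det\begin{pmatrix} zI-A & -U\\ V^* & 0\end{pmatrix}$ (Schur complement with respect to the block $zI-A$), so that $z_0$ is a root if and only if there is $x\neq0$ with $V^*x=0$ and $(A-z_0)x\in\operatorname{ran}U$. For generic vectors $\operatorname{ran}U$ and $\ker V^*$ are complementary, and the condition says exactly that $z_0$ is an eigenvalue of the compression $PA|_{\ker V^*}$, where $P$ is the projection onto $\ker V^*$ along $\operatorname{ran}U$. Since $\deg q=n-2=\dim\ker V^*$, all roots are simple precisely when this $(n-2)$-dimensional compression has distinct spectrum. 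The generic conclusion then follows from a discriminant argument: $\operatorname{disc}_z q$ is a polynomial in the $4n$ coordinates of $(u,w,g,h)$, so it suffices to exhibit one configuration for which the compression has $n-2$ distinct eigenvalues. I would stress that this existence step is really a condition on $A$ and not merely on the vectors --- for instance, if $A$ is scalar then every compression is scalar and $q=\mathrm{const}\cdot(z-\lambda)^{n-2}$ has a single root of multiplicity $n-2$ --- so the statement implicitly needs a nondegeneracy assumption on $A$ (it holds, e.g., whenever $A$ has distinct eigenvalues); pinning down the optimal such hypothesis is where the real work lies.

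Finally, (c) follows from (a)--(b) by two applications of the continuity of roots (Hurwitz's theorem). With $s=\alpha r$, $t=\beta r$, part (a) gives $p_r(z)=\det(zI-A)+\alpha r\,w^*\operatorname{adj}(zI-A)u+\beta r\,h^*\operatorname{adj}(zI-A)g+\alpha\beta r^2 q(z)$. Dividing by $r^2$, one has $p_r/r^2\to\alpha\beta\,q$ uniformly on compact sets, so for large $r$ exactly $n-2$ eigenvalues remain in a fixed large disc and converge to the ($n-2$ simple) zeros of $q$. For the remaining two I would rescale $z=r\zeta$ and compute $r^{-n}p_r(r\zeta)\to\zeta^{n-2}\bigl(\zeta^2+(\alpha w^*u+\beta h^*g)\zeta+\alpha\beta\det(V^*U)\bigr)$; the quadratic factor is, under $\zeta\mapsto-\zeta$, the characteristic polynomial of the $2\times2$ matrix $DV^*U$, whose eigenvalues are the nonzero eigenvalues of $\alpha uw^*+\beta gh^*$. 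Hence the two escaping eigenvalues obey $z=r\zeta_i+o(r)$, where $\zeta_1,\zeta_2$, the roots of that quadratic, are the negatives of the nonzero eigenvalues $\lambda_i$ of $\alpha uw^*+\beta gh^*$; thus $z=-r\lambda_i+o(r)$, in agreement with the stated asymptotics up to the sign coming from the subtraction, and a degree count confirms that these two together with the $n-2$ bounded roots exhaust the spectrum.
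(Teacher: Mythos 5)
Your proposal is correct in substance, and it takes a genuinely different route from the paper's. For (a) the paper does not use the $2\times 2$ determinant lemma: it quotes the rank-one result of \cite{RW12} for $A-suw^*$ and then applies formula \eqref{formula for Weyl general} of Lemma \ref{lem_inverse} a second time to absorb $tgh^*$; your one-stroke Sylvester computation $\det(B+UDV^*)=\det B\cdot\det(I_2+DV^*B^{-1}U)$ with $U=[\,u\ g\,]$, $V=[\,w\ h\,]$ gives the same $R_{s,t}$ and makes the removal of the auxiliary hypothesis $z\in\rho(A_{s,0})$ completely transparent (the paper only remarks that this hypothesis disappears in the matrix case). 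For the degree claim in (b) the paper reduces $A$ to Jordan form and expands the Weyl functions in powers of $(z-\lambda_j)^{-1}$, checking that $R_\infty$ has no terms of order zero or one and exhibiting one choice of vectors with a nonvanishing order-two coefficient; your Neumann-series extraction of the leading term $\det(V^*U)\,z^{n-2}$, together with the observation that $q$ is the coefficient of $st$ in the characteristic polynomial (so polynomiality is free), is an equivalent but tidier argument. For (c) the paper handles the bounded roots exactly as you do (locally uniform convergence of $r^{-2}R_{\alpha r,\beta r}$ to $R_\infty$ plus Rouch\'e), but gets the escaping pair from the convergence of $r^{-1}(A-\alpha r\,uw^*-\beta r\,gh^*)$ to the rank-two limit, rather than from your rescaling $z=r\zeta$ of the characteristic polynomial; the two devices are interchangeable.

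Two of your side remarks are in fact corrections to the paper rather than weaknesses of your proof. First, you are right that the simplicity assertion in (b) requires a nondegeneracy hypothesis on $A$ that is absent from the statement: the paper's proof of (b) establishes only the degree claim (its reduction ``the statement is equivalent to saying that the expression has nonzero terms of order two'' silently drops simplicity, and the exhibited configuration even permits $j'=j''$, a double pole at a single eigenvalue, which gives no control on the roots). Your counterexample is decisive: for $A=\lambda I$ and $n\geq 4$ one gets $q(z)=\det(V^*U)(z-\lambda)^{n-2}$ for \emph{every} choice of vectors, so no generic set of vectors can rescue simplicity; under your sufficient condition (distinct eigenvalues of $A$) the discriminant argument closes with the single-configuration check, and note that the direction you need follows from the root-set identification alone, since the compression having $n-2$ distinct eigenvalues forces the degree-$(n-2)$ polynomial $q$ to have simple roots. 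Fortunately simplicity is never used in (c), where Rouch\'e counts roots with multiplicity. Second, your sign $z=-r\lambda_i+o(r)$ is the correct asymptotics: the paper's statement pairs the perturbation $-(\alpha r)uw^*-(\beta r)gh^*$ with the eigenvalues of $+\alpha uw^*+\beta gh^*$, and its proof even writes the limit matrix inconsistently as $\alpha uw^*-\beta gh^*$; this is a sign slip in the paper that you were right to flag.
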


\begin{proof}	
(a) By \cite{RW12} the characteristic polynomial of $A-suw^*$ equals 
$\det(z- A) (1+sQ_{u,w}(z))$. Repeating this once again for $tfg^*$ and  $A-suw^*$ substituted for  $suw^*$  and $A$ respectively, we obtain by formula (\ref{formula for Weyl general}) of Lemma \ref{lem_inverse} that the characteristic polynomial of $A-suw^* - t gh ^*$
equals
$$
\det(z- A)(1+sQ_{u,w}(z))\left[ 1+t \frac{Q_{gh}(z)+s Q_{gh}(z)Q_{uw}(z)- sQ_{g w}(z)Q_{uh}(z)}{1+s Q_{uw}(z)}\right]
$$ $$
=\det(z-A)R_{s,t}(z).
$$

(b) Without loss of generality we may assume that $A$ is in the Jordan canonical form
\begin{equation}\label{Ajordan}
A=\bigoplus_{j=1}^l J^{k_i}_{\lambda_i} ,\quad J_{\lambda_i}^{k_i}=\begin{pmatrix} \lambda_i & 1 &&\\ &\ddots &\ddots &\\ &&\ddots &1 \\ &&& \lambda_i \end{pmatrix}\in\Comp^{k_i\times k_i}.
\end{equation}
Note that  $\xi^* (z- J^{k_j}_{\lambda_j})^{-1} \eta$ is a polynomial expression in $(z-\lambda_j)^{-1}$ of degree less or equal to $k_i-1$ of the form
\begin{equation}\label{xe}
\xi^* (z- J^{k_j}_{\lambda_j})^{-1} \eta = \sum_{i=1}^{k_j} \bar \xi_i\eta_i (z-\lambda_j)^{-1}+ \text{h.o.t.}  
\end{equation}
Consequently,
$$
R_\infty(z)=Q_{g,h}(z)Q_{u,w}(z) - Q_{g,w}Q_{u,h}(z)
$$
 is a polynomial expression in  $(z-\lambda_j)^{-1}$ ($j=1,\dots ,k$), without the constant term and without terms of degree one. 
 The statement is equivalent to saying that the above expression has nonzero terms of order two for generic $u,w^*,g,h^*$.
 Note that the coefficient at $(z-\lambda_{j'})^{-1}(z-\lambda_{j''})^{-1}$ is a polynomial expression in the entries of  
$u,w^*,g,h^*$, cf. \eqref{xe}. Hence, to finish the proof of (b) it is enough to show that for some $j',j''$ and for some 
  $u,w^*,g,h^*$ the coefficient at $(z-\lambda_{j'})^{-1}(z-\lambda_{j''})^{-1}$ is nonzero. 
  And so let 
$$
u=w=(1,0,0_{1,n-2})^\top,\quad g=h=(0,1,0_{1,n-2})^\top,\quad j'=1,\quad j''=\begin{cases} 1 & : k_1\geq1\\ 2 &: k_2 =1\end{cases}. 
$$
Then, according to \eqref{xe}, 
$$
R_\infty (z)=(z-\lambda_{j'})^{-1} (z-\lambda_{j''})^{-1}   +\text{ h.o.t. }
$$

%
(c) First observe that the matrix $r^{-1}(A-(\alpha r) uw^* - (\beta r) gh ^*)$ converges with $r\to\infty$ to $\alpha uw^*- \beta gh^*$. For generic $u,w,g,h$ the matrix $\alpha uw^*- \beta gh^*$ is of rank two for all $\alpha,\beta\in\Comp\setminus\set0$. Hence, there are two eigenvalues $\lambda_j(r)$ of $r^{-1} (A-(\alpha r) uw^* - (\beta r) gh ^*)$ that converge to the eigenvalues  $\lambda_j$ of $\alpha uw^*- \beta gh^*$. Note that $r\lambda_j(r)$ ($j=1,2$) is an eigenvalue of   $A-(\alpha r) uw^* - (\beta r) gh ^*$, which finishes the proof of the part of the statement concerning the eigenvalues converging to infinity.

To prove the second statement note that the eigenvalues of the perturbed matrix are by (b) zeros of the function $r^{-2}R_{\alpha r,\beta r}(z)$. The function $r^{-2}R_{\alpha r,\beta r}(z)$ converges locally uniformly to $R_\infty(z)$ with $r\to+\infty$. The claim follows now directly from the Rouche theorem.

\end{proof}

%

\begin{rem}[{Phase transition under small parameter perturbations}] \label{Phasetr}

 Let
$$
A_s=A-suw^* - s gh ^*, \quad s>0,
$$
be such that the spectrum of $A_s$ is symmetric with respect to the imaginary axis. Such situation happens if, for example, $A_s$ are Hamiltonian or real skew-symmetric $A_s=-A_s^\top$.  
Note that in such case the ODE $x'=A_sx$ is stable if and only if the spectrum of $A_s$ is purely imaginary.  Assume also that the spectrum of $A$ is purely imaginary.
We address the following question: what are then the necessary and sufficient conditions for the spectrum of $A_s$ to be contained in the imaginary axis for small values of the parameter $s$? We restrict ourselves to local investigations, observing the behavior of one particular eigenvalue $\lambda_0$ of $A$ under rank two peturbation. Assume that  $A$ has a Jordan chain of length two at $\lambda_0$, which is a typical situation for a phase transition, see e.g. \cite{SWW1}.
Consider the following Laurent expansions
$$
Q_{uw}(z)+Q_{gh}(z)= \sum_{j=-2}^\infty a_j (\lambda_0-z)^j,
$$ 
$$
Q_{u,w}(z)Q_{g,h}(z) -Q_{g,w}(z)Q_{u,h}(z)= \sum_{j=-2}^\infty b_j (\lambda_0-z)^j.
$$
We assume  that $a_{-2}\neq0$, which, in case of $A$ being Hamiltonian or skew symmetric and having a Jordan chain of length two,  is a generic assumption on $u,w^*,g,h^*$. The equation 
$R_{s,s}(z)=0$ from Theorem \ref{matrixcase}(a)
 takes the form
$$
1+s \sum_{j=-2}^\infty a_j (\lambda_0-z)^j + s^2  \sum_{j=-2}^\infty b_j (\lambda_0-z)^j =0.
$$
Observe that the solutions in $z$ of the above equation can be expanded for $s>0$ in the Puiseux series
$$
z_\pm(s)=\lambda_0\pm\sum_{j=1}^\infty c_j ( s^{1/2} )^j, 
$$ 
with
$$
c_1=\frac{-1}{a_{-2}},
$$
see also e.g. \cite{kato}.
Therefore, the necessary and sufficient conditions for $z(s)\in\ii\!\Real$ for small $s$  is that $a_{-2}\in\ii\!\Real$. 
Observe that this condition can be checked directly by numerical methods. Namely, $a_{-2}\in\ii\!\Real$ if and only if 
$$
Q_{uw}(\lambda_0\pm \ii\varepsilon)+Q_{gh}(\lambda_0+\pm \varepsilon)\in\Real
$$
for $\varepsilon$ sufficiently small. 

\end{rem}

\subsection{Parametric rank one perturbations and large scale limits of singular values}

Now we move to study of singular values of rank {\em one} perturbations of matrices. We refer the reader to \cite{Thompson76} for a non-parametric approach to the problem and to \cite{	Stange1,Stange2} for applications in numerical methods. As in the previous subsection we will be interested in the limits of the singular values for large values of the perturbation parameter. 
First observe the following fact. Let $u,v\in\Comp^n$, $\norm v=\norm u =1$, without loss of generality we may assume that $\tau>0$. Then
with $w=B^*v$ and $A=B^*B$  we have 
\begin{equation}\begin{gathered}
(B-\tau vu^*)^*(B-\tau vu^*) = B^*B - \tau B^*vu^*-   \tau uv ^*B + \tau^2 uv^*vu^*\\
 =  B^*B  -  \tau uw^*- \tau( w- \tau u)u^*,
\end{gathered}\label{BBB}
\end{equation}
which is a rank two perturbation of $B^*B$ unless $B^*v$ is a scalar multiple of $u$. Since in that case the usual perturbation theory for Hermitian matrices may be applied we will always assume that  $B^*v$ is not a scalar multiple of $u$. 
Such setting allows us to apply Theorem \ref{matrixcase}(a) with $A=B^*B$, $s= t=\tau$, $g=w-\tau u$, $h=u$. Here, this only one time in the paper, we violate the convention that the vectors $u,v,g,h$ do not depend on the parameters $s,t$. Therefore, we cannot use Theorem \ref{matrixcase}(c) directly but we need a  separate calculation. Furthermore, note that the above rank two perturbation of $B^*B$ is of special type. Namely, assume that $\rank B=k\leq n-2$. Then a generic rank two perturbation of $B^*B$ will be of rank
$k+2$. However, $\rank(B^*B  -  \tau uw^*- \tau( w- \tau u)u^*)=\rank(B-\tau vu^*)=k+1$ for generic $u,v$. 

\begin{thm}\label{svd}
Let $B\in\Comp^{m\times n}$  let $u,v\in\Comp^n$ be of norm one and such that $B^*v$ is not a scalar multiple of $u$. Let also 
 $\sigma_1(\tau)\geq \dots \geq \sigma_n(\tau)$ denote the singular values of $B-\tau vu^*$.
Then the following statements hold. 
\begin{itemize}
\item[(a)]
The characteristic polynomial of $(B-\tau vu^*)^*(B-\tau vu^*)$ $(\tau>0)$ equals  
$$
R_\tau(x)\det(x-B^*B),\quad \text{for}\quad x\in\Real,
$$
where
$$
R_\tau(x)= 1+ 2\tau \RE (  Q_{u,B^*v}(x) ) +\tau^2( -Q_{u,u}(x) + |Q_{u,B^*v}(x)|^2 -Q_{B^*v,B^*v}(x)Q_{u,u}(x)  ) .
$$
\item[(b)] The polynomial
$$
q(x)=\left( -Q_{u,u}(x) + |Q_{u,B^*v}(x)|^2 -Q_{B^*v,B^*v}(x)Q_{u,u}(x)  \right)  \det(x-B^*B)
$$
is of degree $n-1$ and has positive zeros $z_1,\dots,z_{n-1}$ satisfying
    $$
 \sigma_1(0)> \sqrt{z_1}\geq  \sqrt{z_2}\cdots \geq \sqrt{z_{n-1}}\geq 0.
 $$
\item[(c)] As $\tau\to+\infty$ one has
\begin{equation}\label{sigma1}
\sigma_1(\tau)=\tau+o(\tau),
\end{equation}
and
\begin{equation}\label{sigman}
\sigma_j(\tau) \to  \sqrt{z_{j-1}} ,\quad j=2,\dots,n.
\end{equation}
\end{itemize}
\end{thm}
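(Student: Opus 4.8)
\emph{Part (a).} I would start from the identity \eqref{BBB}, which displays $(B-\tau vu^*)^*(B-\tau vu^*)$ as a rank-two perturbation $A-\tau uw^*-\tau gh^*$ of $A:=B^*B$ with $w:=B^*v$, $g:=w-\tau u$, $h:=u$ and $s=t=\tau$. Feeding these data into Theorem \ref{matrixcase}(a) gives the characteristic polynomial in the form $\det(x-A)\,R_{\tau,\tau}(x)$, and it remains to substitute $g=w-\tau u$, $h=u$ and simplify. Since $A=A^*$, for real $x$ one has $Q_{w,u}(x)=\overline{Q_{u,w}(x)}$ and $Q_{u,u}(x),Q_{w,w}(x)\in\Real$; with these relations the terms cubic in $\tau$ cancel and the coefficient of $\tau^2$ collapses to $-Q_{u,u}+|Q_{u,w}|^2-Q_{w,w}Q_{u,u}$, producing the stated $R_\tau$. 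This is a routine, if slightly lengthy, calculation.

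\emph{Part (b), degree and reality.} Split $q=-Q_{u,u}\det(x-A)+\bigl(|Q_{u,w}|^2-Q_{w,w}Q_{u,u}\bigr)\det(x-A)$. The first summand is $-u^*\,\mathrm{adj}(x-A)\,u$, a polynomial of degree $n-1$ whose leading coefficient is $-\norm u^2=-1$ (because $Q_{u,u}(x)=x^{-1}+O(x^{-2})$). The second summand equals $-\det(x-A)\bigl(Q_{u,u}Q_{w,w}-Q_{u,w}Q_{w,u}\bigr)$, which is precisely the polynomial of Theorem \ref{matrixcase}(b) for the vector choice $u,w,g=w,h=u$, hence of degree at most $n-2$; thus $\deg q=n-1$. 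To place the zeros I would pass to the large-parameter limit: by (a), $\det\bigl(x-(B-\tau vu^*)^*(B-\tau vu^*)\bigr)=\det(x-A)+\tau\,p_1(x)+\tau^2 q(x)$ with $\deg p_1\le n-1$, so $\tau^{-2}$ times the left-hand side tends to $q(x)$ locally uniformly. As the perturbed matrix is Hermitian and positive semidefinite, its eigenvalues $\sigma_j(\tau)^2$ are real and nonnegative; Hurwitz's theorem then forces the $n-1$ bounded eigenvalues to converge to the zeros of $q$, which are therefore real, nonnegative, and ordered $z_1\ge\cdots\ge z_{n-1}\ge0$ (being limits of the decreasing quantities $\sigma_2(\tau)^2\ge\cdots\ge\sigma_n(\tau)^2$).

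\emph{Part (c).} The estimate \eqref{sigma1} is immediate from $\bigl|\,\norm B-\tau\,\bigr|\le\norm{B-\tau vu^*}\le\norm B+\tau$ (using $\norm{vu^*}=1$), so $\sigma_1(\tau)=\tau+O(1)$. This identifies the single eigenvalue escaping to $+\infty$; combined with the Hurwitz argument and the ordering above it yields $\sigma_j(\tau)^2\to z_{j-1}$ for $j\ge2$, i.e.\ \eqref{sigman}, and at the same time fixes the labelling of the $z_i$ in (b).

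\emph{The main obstacle} is the \emph{strict} inequality $\sigma_1(0)>\sqrt{z_1}$. The non-strict bound is clean: writing $\Phi(x)$ for the $2\times2$ matrix $\bigl(b^*(x-A)^{-1}a\bigr)_{a,b\in\{u,w\}}$ one has $q=-\bigl(Q_{u,u}+\det\Phi\bigr)\det(x-A)$, and for $x>\sigma_1(0)^2=\lambda_{\max}(B^*B)$ the resolvent $(x-A)^{-1}$ is positive definite. Because $u$ and $w=B^*v$ are linearly independent --- this is exactly where the hypothesis $B^*v\notin\Comp u$ is used --- the matrix $\Phi(x)$ is then positive definite, so $Q_{u,u}(x)+\det\Phi(x)>0$ and $q(x)<0$ on $(\lambda_{\max},\infty)$, giving $z_1\le\lambda_{\max}$. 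Upgrading to a strict inequality amounts to checking $q(\lambda_{\max})\neq0$, i.e.\ that $\sigma_1(0)$ does not survive as a singular value of $B-\tau vu^*$ for all $\tau$; I expect this to be the genuinely delicate point, requiring a non-degeneracy input (that $u$ is not orthogonal to the top singular subspace of $B$), and I would isolate it as a separate lemma.
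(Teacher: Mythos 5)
Your proposal is, in substance, the paper's own proof. Part (a) is done there exactly as you describe: substitute $s=t=\tau$, $g=B^*v-\tau u$, $h=u$ into Theorem \ref{matrixcase}(a), observe the cancellation of the $\tau^3$ terms, and restrict to real $x$ using $Q_{w,u}(x)=\overline{Q_{u,w}(x)}$. For the degree count in (b) the paper expands $-Q_{u,u}$ and $|Q_{u,B^*v}|^2-Q_{B^*v,B^*v}Q_{u,u}$ in powers of $(x-\lambda_j)^{-1}$ (first-order terms present, only second-order terms, respectively), which is an equivalent phrasing of your adjugate/leading-coefficient argument; the reality, nonnegativity and ordering of the $z_i$, and part (c), are obtained by the same locally uniform convergence of $\tau^{-2}R_\tau$ to $R_\infty$ plus a Rouch\'e/Hurwitz argument. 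The paper derives \eqref{sigma1} from $\tau^{-2}(B-\tau vu^*)^*(B-\tau vu^*)\to uu^*$; your two-sided norm bound $\tau-\norm B\le\sigma_1(\tau)\le\tau+\norm B$ is an acceptable, indeed simpler, substitute. Your Gram-matrix packaging of Cauchy--Schwarz is literally the paper's inequality $R_\infty(x)<|Q_{u,B^*v}(x)|^2-Q_{B^*v,B^*v}(x)Q_{u,u}(x)\le0$ for $x>\sigma_1(0)^2$, and this is also exactly where the hypothesis $B^*v\notin\Comp u$ enters in the paper.

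Your suspicion about the strict inequality $\sigma_1(0)>\sqrt{z_1}$ is the one substantive point, and it is vindicated: the paper's argument only shows $q(x)\neq0$ on the open ray $x>\sigma_1(0)^2$, hence only $z_1\le\sigma_1(0)^2$; the boundary point is never examined, and in fact the strict inequality is \emph{false} under the stated hypotheses. Take $B=\mathrm{diag}(\sqrt3,\sqrt2,1)$, $u=e_2$, $v=e_3$, so that $w=B^*v=e_3$ is not a multiple of $u$. Then $Q_{u,B^*v}\equiv0$, $Q_{u,u}(x)=(x-2)^{-1}$, $Q_{B^*v,B^*v}(x)=(x-1)^{-1}$, and
\begin{equation*}
q(x)=-(x-3)(x-1)-(x-3)=-x(x-3),
\end{equation*}
so $z_1=3=\sigma_1(0)^2$; concretely, $(B-\tau vu^*)^*(B-\tau vu^*)e_1=3e_1$ for every $\tau$, so the singular value $\sqrt3$ never moves. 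The mechanism is the one you half-guessed: writing $P$ for the spectral projection onto the top eigenspace of $B^*B$, the residue analysis at $\lambda_{\max}$ shows $q(\lambda_{\max})=0$ whenever $Pu=0$ \emph{and} $PB^*v=0$ (as in the example), and also automatically whenever $\lambda_{\max}$ has multiplicity at least $3$, since $R_\infty$ has poles of order at most $2$ while $\det(x-B^*B)$ vanishes to order $\ge3$ (e.g.\ $B=I_3$ gives $q(x)=-(x-1)^2+(|v^*u|^2-1)(x-1)$ with $z_1=1$). So the separate lemma you propose cannot be proved as stated: it requires a nondegeneracy hypothesis such as $\lambda_{\max}$ simple together with $Pu\neq0$ or $PB^*v\neq0$ (or a genericity assumption on $u,v$, in line with the rest of the paper). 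Apart from this defect --- which the paper's own proof shares --- your proposal is complete and correct.
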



\begin{proof}
 For the whole proof we fix arbitrary $u,v$ of norm one. 
 
 (a) Observe that
 \begin{eqnarray*}
R_\tau(x)&=&1+s Q_{u,w}(x)+tQ_{g,h}(x)+st Q_{u,w}(x)Q_{g,h}(x) -stQ_{g,w}(x)Q_{u,h}(x)\\
&=&   1+ \tau  Q_{u,w}(x)+\tau Q_{w-\tau u,u}(x)+\tau^2 Q_{u,w}(x)Q_{w-\tau u,u}(x)\\
& -& \tau^2Q_{w-\tau u,w}(x)Q_{u,u}(x)\\
&=& 1+\tau  Q_{u,w}(x)+\tau Q_{w,u}(x) -\tau^2 Q_{u,u}(x)+\tau^2 Q_{u,w}(x)Q_{w,u}(x) \\
&-& \tau^3 Q_{u,w}(x)Q_{u,u}(x) - \tau^2 Q_{w,w}(x)Q_{u,u}(x) +\tau^3 Q_{u,w}(x)Q_{u,u}(x).
\end{eqnarray*}
As we know that $R_\tau(x)$ has real roots only, we may consider $x\in\Real$, which simplifies the above to
\begin{equation}\label{Rtau}
R_\tau(x)= 1+2\tau \RE (  Q_{u,w}(x) ) +\tau^2( -Q_{u,u}(x) + |Q_{u,w}(x)|^2 -Q_{w,w}(x)Q_{u,u}  ) .
\end{equation}

(b) Like in the proof of Theorem  \ref{matrixcase} (b) (see therein for  details) we see that
$$
R_\infty(x):=( -Q_{u,u}(x) + |Q_{u,B^*v}(x)|^2 -Q_{B^*v,B^*v}(x)Q_{u,u}(x)  )
$$
is a polynomial expression in $(x-\lambda_j)^{-1}$ $(j=1,\dots, n)$ with the coefficients depending polynomially on the entries of $u$ and $v^*$.
 The statement is equivalent to saying that the above expression has nonzero terms of order one.
 This results from the fact that the matrix $A=B^*B$ is Hermitian and hence  $-Q_{u,u}(x)$ is a polynomial in $(x-\lambda_j)^{-1}$ $(j=1,\dots, n)$ of degree one and $ |Q_{u,B^*v}(x)|^2 -Q_{B^*v,B^*v}(x)Q_{u,u}(x)$ is a polynomial  in $(x-\lambda_j)^{-1}$ $(j=1,\dots, n)$ that has only terms of degree two. 

By a continuity argument all zeros of  $q(x)$ are nonnegative. Now assume that $x>\sigma_1^2(B)$, which implies that the matrix $(x-B^*B)^{-1}$ is positive definite and the quadratic form 
$[u,w]=Q_{u,w}(x)$ is an inner product. By the Cauchy-Schwartz inequality 
 we have
$$
R_\infty(x)< |Q_{u,B^*v}(x)|^2 -Q_{B^*v,B^*v}(x)Q_{u,u}(x) \leq 0.
$$
Hence, $q(x)\neq 0$.

(c) Observe that according to \eqref{BBB} there is one eigenvalue of $\tau^{-2}(B+\tau vu^*)^*(B+\tau vu^*)$ converging with $\tau\to\infty$ to $u^*u=1$.  Hence,  there is one singular value of $B+\tau vu^*$ converging to infinity as $\tau+{o}(\tau)$. 

Analogously as in Theorem  \ref{matrixcase} (b) we show that the remaining $n-1$ eigenvalues of 
$(B+\tau vu^*)^*(B+\tau vu^*)$ converge to the generically positive zeros $z_1,\dots,z_{n-1}$ of the polynomial $q$.
\end{proof}

\begin{rem}\label{Bvu}
Note that for proving statement \eqref{sigma1} we did not need the assumption that $B^*v$ is not a multiple of $u$. 
\end{rem}

\begin{rem}  Our claim is that the convergence  in \eqref{sigman} is generically linear, numerical simulation confirming this claim may be seen in Figure \eqref{svfig}. Nonetheless, 
in the case $u^*B^{-1}v=0$, which may be seen as a structure assumption, several sub-cases occur obscuring the general picture.
\end{rem}

\begin{thm} Let $B\in\Comp^{n\times n}$ $(n\geq2)$ be invertible,  let $u,v\in\Comp^n$ be of norm one and let  $\sigma_n(\tau)$ denote the smallest singular value  of $B-\tau vu^*$. Then 
 $\sigma_n(\tau)$ converges to zero  with $\tau\to\infty$ if and only if $u^*B^{-1}v=0$. In such case the convergence is linear, i.e. 
$\sigma(\tau)=\norm{{B^{-1}u}}^{-2}\tau^{-1}+o(\tau^{-1})$. 
In the opposite case, if  $\sigma_n(\tau)$ does not converge to $0$, then it converges to the  reciprocal of the largest singular value of
$$
B_\infty:=B^{-1} +\frac{1}{ u^*B^{-1} v} B^{-1}vu^*B^{-1}.
$$
\end{thm}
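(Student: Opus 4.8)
The plan is to combine the Sherman--Morrison formula \eqref{Btinv} of Lemma \ref{lem_inverse} with the elementary fact that, for an invertible matrix $M$, the smallest singular value equals $\sigma_n(M)=\norm{M^{-1}}^{-1}$, where $\norm{\cdot}$ is the spectral norm (the largest singular value of $M^{-1}$). Writing $M=B-\tau vu^*$ and applying \eqref{Btinv} to the rank-one perturbation $-\tau vu^*$ of $B$, one obtains, for every $\tau$ with $1-\tau u^*B^{-1}v\neq0$,
\[
(B-\tau vu^*)^{-1}=B^{-1}+\frac{\tau}{1-\tau u^*B^{-1}v}\,B^{-1}vu^*B^{-1}.
\]
Since $B$ is invertible and $u,v\neq0$, the rank-one matrix $R:=B^{-1}vu^*B^{-1}$ is nonzero. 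For all large real $\tau$ the denominator stays bounded away from $0$ (its modulus tends to $+\infty$ when $u^*B^{-1}v\neq0$, and it is identically $1$ when $u^*B^{-1}v=0$), so $M$ is invertible and the identity $\sigma_n(\tau)=\norm{M^{-1}}^{-1}$ is legitimate throughout the limit. The whole statement is then read off from the scalar prefactor $\tau/(1-\tau u^*B^{-1}v)$.

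First I would treat the case $u^*B^{-1}v\neq0$. Here the prefactor converges to a finite limit as $\Real\ni\tau\to+\infty$, so $M^{-1}$ converges entrywise to the finite matrix $B_\infty$ of the statement; by continuity of the spectral norm, $\norm{M^{-1}}\to\norm{B_\infty}=\sigma_{\max}(B_\infty)$. As $B_\infty$ is a nonzero matrix (an invertible $B^{-1}$ corrected by a single rank-one term cannot vanish for $n\geq2$), this limit is strictly positive, and hence $\sigma_n(\tau)\to\sigma_{\max}(B_\infty)^{-1}>0$. In particular $\sigma_n(\tau)$ does not tend to $0$, which is one half of the claimed dichotomy and also identifies the limit as the reciprocal of the largest singular value of $B_\infty$.

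In the complementary case $u^*B^{-1}v=0$ the prefactor is exactly $\tau$, so $M^{-1}=B^{-1}+\tau R$ with $R$ a fixed nonzero rank-one matrix. For a fixed matrix plus a large multiple of a rank-one matrix one has $\norm{B^{-1}+\tau R}=\tau\norm R+O(1)$ as $\tau\to+\infty$: the upper bound is the triangle inequality, while the matching lower bound follows by testing $M^{-1}$ on a unit multiple of the right singular vector of $R$. Consequently $\norm{M^{-1}}\to+\infty$, so $\sigma_n(\tau)\to0$, and moreover
\[
\sigma_n(\tau)=\norm R^{-1}\,\tau^{-1}+o(\tau^{-1}),
\]
a linear rate whose leading coefficient is the reciprocal of the operator norm of $R=B^{-1}vu^*B^{-1}$, computed from the formula $\norm{ab^*}=\norm a\,\norm b$ for a rank-one matrix. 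Combining the two cases yields the dichotomy $\sigma_n(\tau)\to0\iff u^*B^{-1}v=0$.

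I expect the main obstacle to lie in this second case, namely in upgrading the effortless conclusion $\norm{M^{-1}}\to+\infty$ (which only gives $\sigma_n(\tau)\to0$) to the precise linear asymptotics with the correct constant. This amounts to controlling the $O(1)$ correction in $\norm{B^{-1}+\tau R}=\tau\norm R+O(1)$, i.e. showing that adding the fixed matrix $B^{-1}$ shifts the top singular value of $\tau R$ only by a bounded amount. A clean route is to expand $\sigma_{\max}(M^{-1})^2=\lambda_{\max}(M^{-*}M^{-1})$, isolate the dominant rank-one term $\tau^2R^*R$, and apply first-order eigenvalue perturbation; this simultaneously bounds the correction and pins down the leading coefficient. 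The only bookkeeping point to keep in mind is the one already noted, that for large real $\tau$ the matrix $M$ is genuinely invertible, so that $\sigma_n(\tau)=\norm{M^{-1}}^{-1}$ may be used up to and including the passage to the limit.
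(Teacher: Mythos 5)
Your argument is correct, and in the case $u^*B^{-1}v\neq 0$ it is essentially the paper's: both proofs rest on the Sherman--Morrison formula of Lemma \ref{lem_inverse}, the identity $\sigma_n(M)=\norm{M^{-1}}^{-1}$, continuity of the spectral norm, and the remark that the limit matrix has rank at least $n-1$, hence nonzero norm. Where you genuinely diverge is the case $u^*B^{-1}v=0$. The paper reduces it to formula \eqref{sigma1} of Theorem \ref{svd} (the asymptotics $\sigma_1(\tau)=\tau+o(\tau)$ for rank-one perturbations, itself obtained via the rank-two machinery of Theorem \ref{matrixcase}), applied to $\tilde B=B^{-1}$ and normalized versions of the factors of the rank-one term. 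You instead establish the needed estimate $\norm{B^{-1}+\tau R}=\tau\norm{R}+O(1)$ directly. Note that the reverse triangle inequality alone gives both bounds, $\tau\norm{R}-\norm{B^{-1}}\leq\norm{B^{-1}+\tau R}\leq\tau\norm{R}+\norm{B^{-1}}$, so the ``main obstacle'' you anticipate is vacuous: no testing on singular vectors and no eigenvalue perturbation of $\tau^2R^*R$ is required. Your route is therefore more elementary and self-contained than the paper's (it does not presuppose Theorem \ref{svd}), and it even yields the sharper error term $\sigma_n(\tau)=\norm{R}^{-1}\tau^{-1}+O(\tau^{-2})$ in place of $o(\tau^{-1})$.

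Two points where your computation is in fact more reliable than the text, and which you should make explicit rather than defer to the statement. First, your (correct) prefactor $\tau/(1-\tau u^*B^{-1}v)$ tends to $-1/(u^*B^{-1}v)$, so for $M=B-\tau vu^*$ the limit of $M^{-1}$ is $B^{-1}-\frac{1}{u^*B^{-1}v}B^{-1}vu^*B^{-1}$; the plus sign in the statement's $B_\infty$, which you copied when asserting entrywise convergence ``to the matrix of the statement,'' is a sign slip in the paper (its display \eqref{Binf} is not the inverse of either $B+\tau uv^*$ or $B-\tau vu^*$), and the two sign choices give genuinely different norms. Second, writing $R=(B^{-1}v)(B^{-*}u)^*$, your constant $\norm{R}=\norm{B^{-1}v}\,\norm{B^{-*}u}$ is the correct one, whereas the paper's coefficient $\norm{B^{-1}u}^{-2}$ comes from normalizing $B^{-1}u$ instead of $B^{-*}u$ and holds only in special situations (for instance $B=B^*$ and $u=v$); the example $B=\mathrm{diag}(1,2)$, $u=e_1$, $v=e_2$, where $\sigma_n(\tau)\sim 2\tau^{-1}$, confirms your constant against the stated one.
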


\begin{proof}
Observe first that since $B$ is invertible we have by Lemma \ref{lem_inverse} that 
\begin{equation}\label{Binf}
(B+\tau uv^*)^{-1}=B^{-1} -\frac{\tau}{1-\tau u^*B^{-1} v} B^{-1}vu^*B^{-1},\quad \tau\neq (u^*B^{-1} v)^{-1}.
\end{equation}

Assume that $u^*B^{-1} v\neq 0$.  Then the right hand side of \eqref{Binf} clearly converges with $\tau\to\infty$ to 
$B$.  In particular, the largest singular value of $(B+\tau uv^*)^{-1}$ converges to the largest singular value of $B_\infty^{-1}$. Note that as $\rank B_\infty\geq n-1$ the largest singular value of $(B+\tau uv^*)^{-1}$ is nonzero.  In consequence, the claim is proved.  

If $u^*B^{-1} v= 0$
then we apply Theorem \ref{svd} formula \eqref{sigma1} (cf. Remark \ref{Bvu}) to
$$
\tilde B=B^{-1},\quad \tilde v=\frac{B^{-1}v}{\norm{B^{-1}v}},\quad \tilde u=\frac{B^{-1}u}{\norm{B^{-1}u}}.
$$
Then the largest singular value of $(B+\tau uv^*)^{-1}$  behaves as $\norm{{B^{-1}u}}^2\tau+o(\tau)$   and consequently the smallest singular value of $B-\tau uv^*$ converges to zero as $\norm{{B^{-1}u}}^{-2}\tau^{-1}+o(\tau^{-1})$ with $\tau\to\infty$.

\end{proof}

\begin{cor}
If $B\in\Comp^{n\times n}$ $(n\geq2)$ is invertible then the $\norm\cdot_2$-condition number of rank one perturbation $B-\tau vu^*$ with $\norm u=\norm v=1$ with $\tau\to\infty$ equals
$\norm{B_\infty}^{-1}\tau+o(\tau)$ if $u^*B^{-1}v\neq 0$ and $\norm{{B^{-1}u}}^{2}\tau^2+o(\tau^2)$ in the opposite case. 
\end{cor}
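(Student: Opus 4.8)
The $\norm{\cdot}_2$-condition number of an invertible matrix $M$ is the quotient $\sigma_1(M)/\sigma_n(M)$ of its largest and smallest singular values, so with $M=B-\tau vu^*$ the whole statement reduces to dividing the large-$\tau$ asymptotics of $\sigma_1(\tau)$ by those of $\sigma_n(\tau)$, both of which are already at hand. The plan is therefore to assemble the numerator and the denominator separately and then simplify the ratio in each of the two regimes.

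For the numerator I would invoke Theorem \ref{svd}(c), formula \eqref{sigma1}, giving $\sigma_1(\tau)=\tau+o(\tau)$; by Remark \ref{Bvu} this needs no hypothesis relating $B^*v$ to $u$, hence it is valid in the full generality of the corollary. The denominator is governed by the preceding theorem, which branches according to the scalar $u^*B^{-1}v$. When $u^*B^{-1}v\neq0$ that theorem gives $\sigma_n(\tau)\to\norm{B_\infty}^{-1}$, a strictly positive limit; this also certifies that $B-\tau vu^*$ is invertible for all large $\tau$, so that the condition number is defined. Dividing $\tau+o(\tau)$ by $\norm{B_\infty}^{-1}+o(1)$ shows the condition number grows linearly in $\tau$, with leading coefficient the reciprocal of this limit and the error terms collapsing into a single $o(\tau)$.

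When $u^*B^{-1}v=0$ the preceding theorem gives instead $\sigma_n(\tau)=\norm{B^{-1}u}^{-2}\tau^{-1}+o(\tau^{-1})$, so that the smallest singular value itself decays like $\tau^{-1}$. Now both factors of the quotient carry a power of $\tau$, and dividing $\tau+o(\tau)$ by $\norm{B^{-1}u}^{-2}\tau^{-1}+o(\tau^{-1})$ upgrades the rate to the quadratic $\norm{B^{-1}u}^{2}\tau^{2}+o(\tau^{2})$, as claimed.

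I expect no serious obstacle here: the corollary is essentially a corollary of the two results immediately preceding it. The only points demanding care are the bookkeeping of the $o(\cdot)$-terms through the division—in particular confirming that $\sigma_n(\tau)$ stays bounded away from zero in the first regime, so that $1/\sigma_n(\tau)$ inherits a clean asymptotic expansion—and the appeal to Remark \ref{Bvu} to discard the genericity assumption on $B^*v$ built into Theorem \ref{svd}.
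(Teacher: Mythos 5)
Your route is the intended one: the paper gives no separate argument for this corollary, which is meant to follow immediately by dividing the asymptotics of $\sigma_1(\tau)$ from Theorem \ref{svd}(c), formula \eqref{sigma1} (valid without the non-proportionality hypothesis, by Remark \ref{Bvu}), by the asymptotics of $\sigma_n(\tau)$ from the theorem directly preceding the corollary, exactly as you do; your attention to invertibility for large $\tau$ and to $\sigma_n(\tau)$ staying bounded away from zero in the first regime is the right bookkeeping.

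One point needs to be confronted rather than glossed over. In the regime $u^*B^{-1}v\neq 0$ the preceding theorem gives $\sigma_n(\tau)\to\norm{B_\infty}^{-1}$ (the reciprocal of the largest singular value of $B_\infty$), so the quotient is
$\kappa(\tau)=\sigma_1(\tau)/\sigma_n(\tau)=\bigl(\tau+o(\tau)\bigr)\bigl(\norm{B_\infty}^{-1}+o(1)\bigr)^{-1}=\norm{B_\infty}\,\tau+o(\tau)$,
which is precisely what you compute when you say the leading coefficient is ``the reciprocal of this limit.'' But the corollary as printed asserts the coefficient $\norm{B_\infty}^{-1}$, and $\norm{B_\infty}\neq\norm{B_\infty}^{-1}$ in general. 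So your derivation, which is the correct arithmetic, establishes a \emph{different} constant from the one in the statement; the printed $\norm{B_\infty}^{-1}$ appears to be a misprint for $\norm{B_\infty}$ (or for the largest singular value of $B_\infty$, which is the same thing in the $\norm{\cdot}_2$ norm). As written, your proof cannot be said to prove the statement verbatim, and you should either flag the discrepancy explicitly or verify which constant is intended. The second regime is clean: $\bigl(\tau+o(\tau)\bigr)/\bigl(\norm{B^{-1}u}^{-2}\tau^{-1}+o(\tau^{-1})\bigr)=\norm{B^{-1}u}^{2}\tau^{2}+o(\tau^{2})$, in agreement with the claim.
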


\begin{figure}[ht]
\caption{The singular values of a $A+\tau uv^*$ (left figure) and the log-log plot of their linear convergence to the final location (right figure, the line $y=-x$ plotted in black for reference). The matrix $A\in\Comp^{10\times 10}$ and the vectors $u,v\in\Comp^{10}$ are random.}\label{svfig}
\includegraphics[width=200pt]{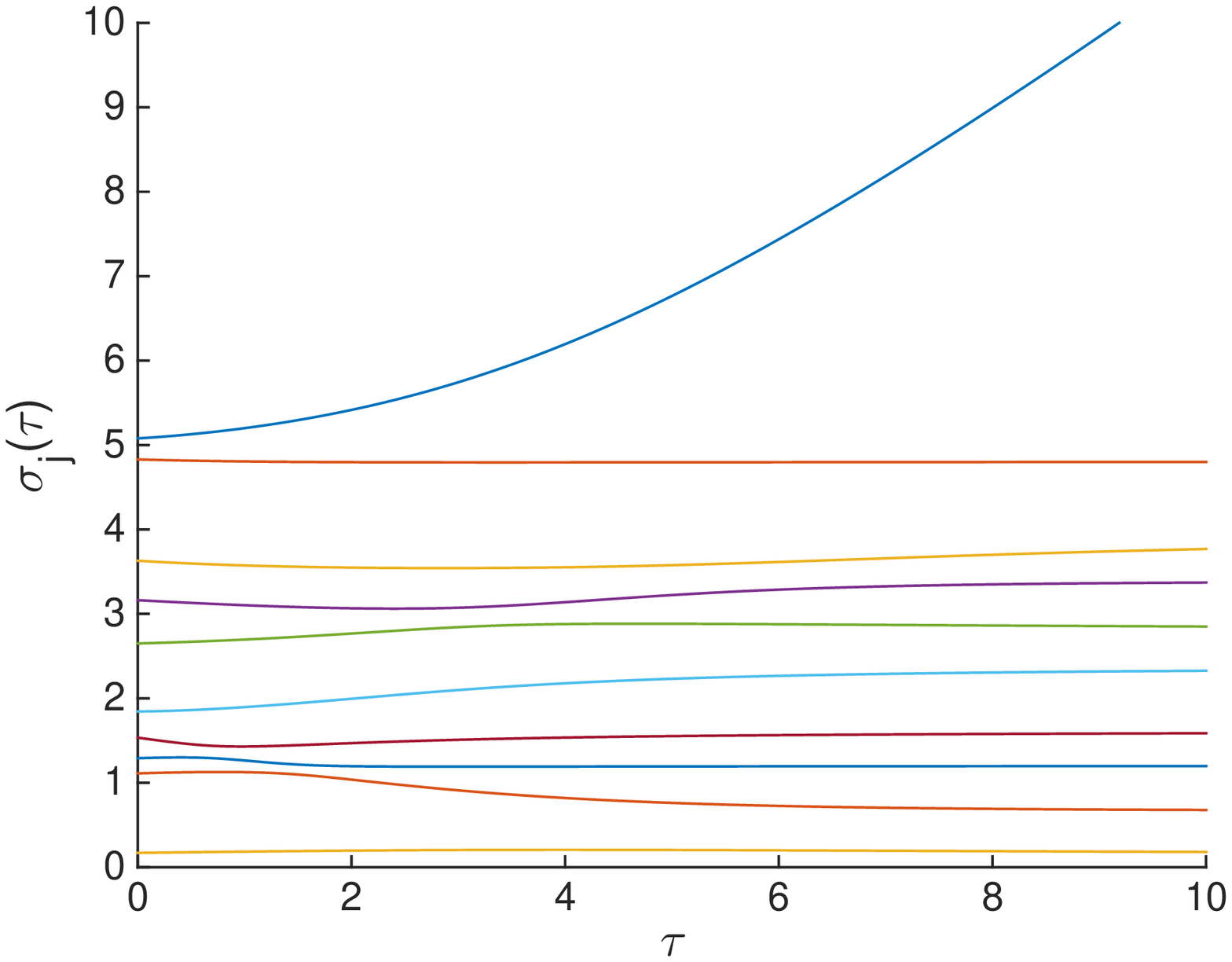}
\includegraphics[width=200pt]{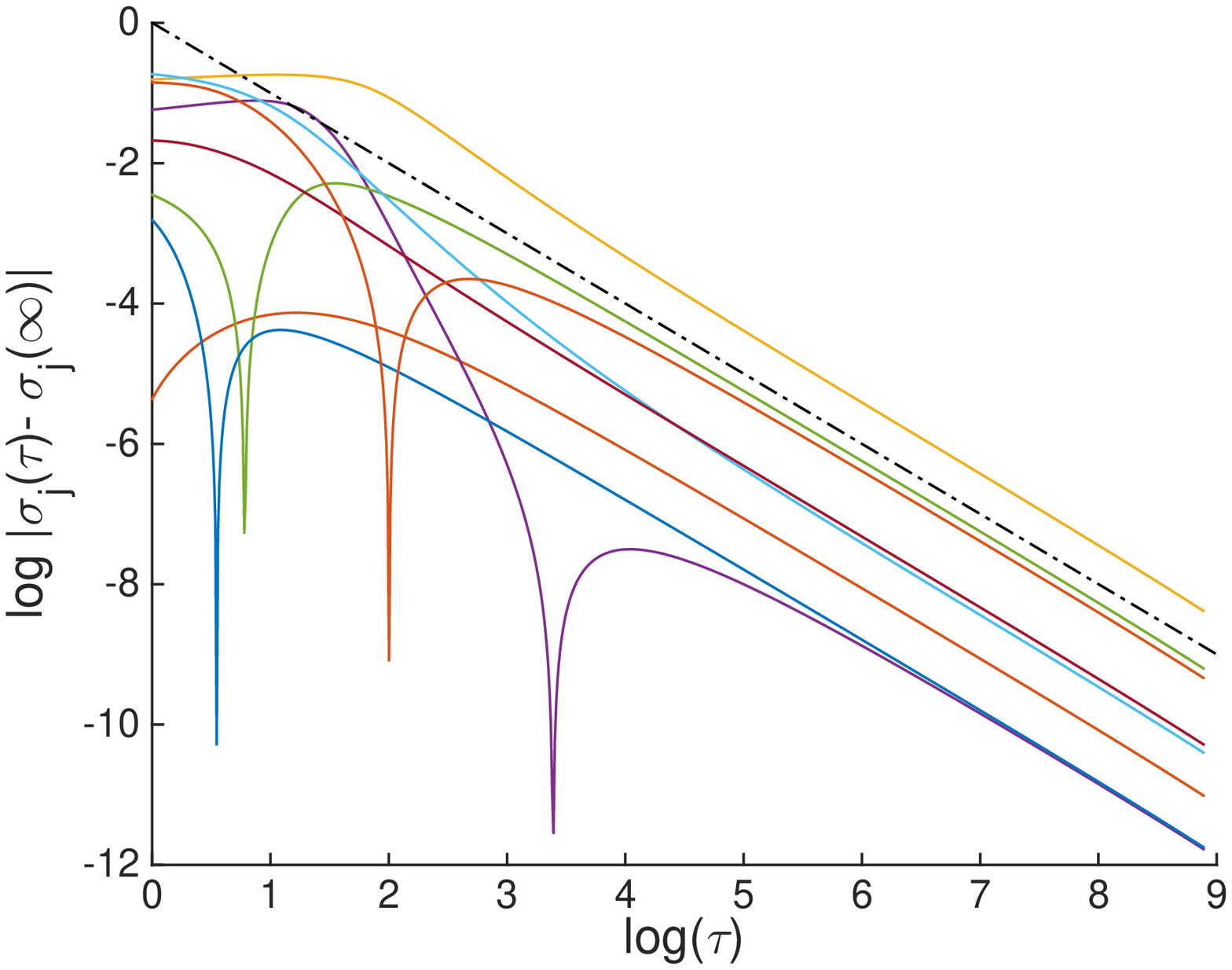}
\end{figure}

\subsection{Interlacing property and phase transition}

In this subsection we will be interested in the following property: the spectra of $A$ and $A_{s,t}=A-s uw^* - tgh^*$ are real and simple and between any two consecutive eigenvalues of one of the matrices there exists exactly one eigenvalue of another one. In such case we say that the spectra of $A$ and $A_{s,t}$ {\em interlace}. The general case was solved by Donat and Mulet in \cite{DM}. Below we show a simplified sufficient condition for $A=A^*$ and $\displaystyle \widetilde{\A_{s,t}}:= A- s(\D{Au}{u}) -t(\D{u}{Au})$ with real parameters $s,t$.
And so let
$A=A^*$ and let $\lambda_1, \dots , \lambda_n$ be the eigenvalues of $A$.
The Weyl function $Q_u(z):=\<(z-A)^{-1}u,u\>$ ($\norm u=1$)
has the form
\begin{equation}\label{Weylc}
Q_u(z)=\sum_{j=1}^{n}\frac{c_j}{z-\lambda_j},\quad \sum_{j=1}^nc_j=1.
\end{equation}
where the coefficients $c_j$ are nonnegative.
Furthermore, for generic $u\in\Comp^n$ they are positive.
 First note that  if  $(1-s)(1-t)>0$, then by the Remark \ref{remark on s,t real}, the spectrum  $\sigma(\widetilde{\A_{s,t}})$ is purely real but does not necessarily interlace the spectrum of $A$. We will present now a different sufficient condition for realness of spectrum and the interlacing property.

\begin{thm}\label{tinterlace}
Let $A=A^*\in\Comp^n$ has only simple eigenvalues and let $u\in\Comp^n$, $\norm u=1$ be such that  all coefficients $c_j$ $(j=1,\dots n)$ in \eqref{Weylc} are nonzero, i.e. $u$ is a generic vector. Then for $s,t\in\Real$ such that $st\neq s+t$ and
\begin{equation}\label{xolambda}
\frac{-stm}{st-s-t} < \min_{j=1,\dots,n}\lambda_j \quad \text{or}\quad \max_{j=1,\dots,n}\lambda_j <\frac{-stm}{st-s-t}
\end{equation}
the eigenvalues of $\widetilde{\A_{s,t}}:= A- s(\D{Au}{u}) -t(\D{u}{Au})$ are necessarily real
and interlace the eigenvalues of $A$.
\end{thm}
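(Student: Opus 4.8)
The plan is to reduce everything to a sign count for the (explicit) characteristic polynomial, since for $s\neq t$ the matrix $\widetilde{\A_{s,t}}$ is not Hermitian and realness of its spectrum is not automatic. By Theorem \ref{matrixcase}(a) together with the simplification in Theorem \ref{thm_sa}(i.1), the characteristic polynomial of $\widetilde{\A_{s,t}}$ equals $\det(z-A)\,R_{s,t}(z)$ with $R_{s,t}(z)=(1-s)(1-t)+(\alpha z+\beta)Q_u(z)$, where $\alpha:=s+t-st$ and $\beta:=stm$. Clearing the denominator of $Q_u$ via \eqref{Weylc}, I would write this as the polynomial
\[
P(z)=(1-s)(1-t)\det(z-A)+(\alpha z+\beta)N(z),\qquad N(z):=\sum_{j=1}^n c_j\prod_{k\neq j}(z-\lambda_k).
\]
Using $\sum_j c_j=1$ one checks that $P$ is monic of degree $n$ (the leading coefficient is $(1-s)(1-t)+\alpha=1$), so $P$ is exactly the characteristic polynomial and its $n$ roots are the eigenvalues of $\widetilde{\A_{s,t}}$; moreover $P$ has real coefficients, so non-real roots occur in conjugate pairs. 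Working with $P$ rather than with the Weyl equation directly has the advantage of removing the $z\in\rho(A)\cap\rho(\widetilde{\A_{s,0}})$ restriction.

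Next I would evaluate $P$ at the eigenvalues $\lambda_1<\dots<\lambda_n$ of $A$. Since $\det(\lambda_j-A)=0$ and only the $j$-th summand of $N$ survives, $P(\lambda_j)=(\alpha\lambda_j+\beta)\,c_j\prod_{k\neq j}(\lambda_j-\lambda_k)$, whose sign is $\operatorname{sign}(\alpha\lambda_j+\beta)\cdot(-1)^{n-j}$ because $c_j>0$ (the genericity hypothesis) and exactly $n-j$ of the factors $\lambda_j-\lambda_k$ are negative. Thus $P(\lambda_1),\dots,P(\lambda_n)$ strictly alternate in sign precisely when the linear factor $\alpha z+\beta$ keeps a constant nonzero sign on $[\lambda_1,\lambda_n]$. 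Its unique zero is $z_0=-\beta/\alpha$, finite because $st\neq s+t$ forces $\alpha\neq0$, and the hypothesis \eqref{xolambda} is exactly the statement that $z_0$ lies outside $[\lambda_1,\lambda_n]$ (here the sign of the numerator must be tracked with care, since $st-s-t=-\alpha$). Granting this, the intermediate value theorem produces a root of $P$ in each of the $n-1$ open intervals $(\lambda_j,\lambda_{j+1})$.

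To finish I would locate the remaining root by comparing the boundary signs $P(\lambda_1),P(\lambda_n)$ with the end behaviour $P(z)\to+\infty$ as $z\to+\infty$ and $P(z)\to(-1)^n\infty$ as $z\to-\infty$: if $\alpha z+\beta>0$ on the band the sign mismatch at $-\infty$ forces the last root below $\lambda_1$, and if $\alpha z+\beta<0$ it is forced above $\lambda_n$. This accounts for all $n$ roots of the degree-$n$ polynomial, so every root is real and simple and there is exactly one in each of the $n-1$ inner gaps plus one in the appropriate outer ray; the resulting configuration $\mu_1<\lambda_1<\mu_2<\lambda_2<\cdots$ (or its reflection) is the asserted interlacing.

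I expect the main obstacle to be the sign bookkeeping rather than anything conceptual: one must reconcile $\operatorname{sign}(\alpha\lambda_j+\beta)$ with the parity $(-1)^{n-j}$ and with the behaviour at $\pm\infty$, and in particular verify that ``$z_0\notin[\lambda_1,\lambda_n]$'' is faithfully recorded by \eqref{xolambda}. Two small points also need checking: that the strict inequalities in \eqref{xolambda} guarantee $\alpha\lambda_j+\beta\neq0$, hence $P(\lambda_j)\neq0$, so that no eigenvalue of $\widetilde{\A_{s,t}}$ collides with a $\lambda_j$ and the alternation is genuinely strict; and that the excluded case $st=s+t$ (where $\alpha=0$ and $z_0$ escapes to infinity) is exactly the degenerate situation in which the linear factor becomes constant and the argument must be run separately.
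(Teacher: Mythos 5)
Your proof is correct and rests on the same mechanism as the paper's: the characteristic-polynomial formula $\det(z-A)\bigl[(1-s)(1-t)+((s+t-st)z+stm)Q_u(z)\bigr]$ from Theorem \ref{matrixcase}(a) combined with Theorem \ref{thm_sa}(i), the positivity of the $c_j$, and the observation that the zero of the linear factor lies outside $[\lambda_1,\lambda_n]$, yielding one eigenvalue per spectral gap plus one real outlier. The execution differs: the paper rewrites the eigenvalue condition as the intersection equation \eqref{Example diagonal real} between the decreasing rational function $\sum_j c_j/(x-\lambda_j)$ and a hyperbola, argues graphically that each gap contains precisely one solution, and gets realness of the remaining eigenvalue from a conjugate-pair count; you instead polynomialize, evaluate $P$ at the $\lambda_j$ to read off the strict sign alternation $\operatorname{sign}(\alpha\lambda_j+\beta)\,(-1)^{n-j}$, and place the $n$-th root by comparing the boundary signs $P(\lambda_1)$, $P(\lambda_n)$ with the end behavior. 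Your version buys three small improvements: it dispenses with the restriction $z\in\rho(A)\cap\rho(\widetilde{\A_{s,0}})$ cleanly (your monic degree-$n$ polynomial $P$ accounts for all eigenvalues, including possible collisions with $\sigma(A)$, which you exclude via $P(\lambda_j)\neq 0$); it turns ``precisely one root per gap'' into a counting consequence of the intermediate value theorem rather than a graphical claim; and it determines on which side of the spectrum the outlier sits, which the paper leaves as ``either\dots or''.

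One point you flagged but did not resolve should be made explicit: as printed, \eqref{xolambda} concerns $\frac{-stm}{st-s-t}=\beta/\alpha$, which for $stm\neq 0$ is the \emph{negative} of the zero $z_0=-\beta/\alpha$ of $\alpha z+\beta$, so your assertion that \eqref{xolambda} ``is exactly'' the condition $z_0\notin[\lambda_1,\lambda_n]$ is literally false for the printed formula. This is a sign typo in the paper rather than a gap in your argument: the paper's proof defines $x_0:=\frac{-stm}{st-s-t}$ yet uses it as the singularity of the hyperbola $-(1-s)(1-t)/\bigl((s+t-st)x+stm\bigr)$, which actually sits at $\frac{-stm}{s+t-st}$, and the numerics of Example \ref{einterlace} confirm the latter: for $s=1.1$, $t=1.2$, $m=2.5$ one gets $\frac{-stm}{s+t-st}=-3.37$ (the value the paper quotes), whereas the printed formula gives $+3.37$; likewise $x_0=1.36$ for $s=-2$, $t=-3$ matches $\frac{-stm}{s+t-st}$. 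So both you and the paper in fact prove the theorem with \eqref{xolambda} read as $\frac{-stm}{s+t-st}<\min_j\lambda_j$ or $\max_j\lambda_j<\frac{-stm}{s+t-st}$; you should state this correction explicitly rather than identify the printed hypothesis with the condition your argument actually needs.
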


\begin{proof}
By Theorem \ref{thm_sa} the characteristic polynomial of $\widetilde{\A_{s,t}}$ equals
\[
\det(zI-A)[(1-s)(1-t)+[z(s+t-st)+stm]Q_u(z)]=0, 
\]
which has only real solutions $z=x\in \R$. Equivalently, for $x\notin\sigma (A)$ one has that   $x\in\sigma(\widetilde{\A_{s,t}})$ if and only if
\begin{equation}\label{Example diagonal real}
\sum_{j=1}^{n}\frac{c_j}{x-\lambda_j}= \frac{-(1-s)(1-t)}{(s+t-st)x+stm}.
\end{equation}
The left hand side is a decreasing rational function with real and simple poles. The right hand side is a hyperbola with the singularity in $\displaystyle x_0:=\frac{-stm}{st-s-t}$. Hence, between two consecutive eigenvalues of $A$ there is necessarily precisely one eigenvalue of $\widetilde{\A_{s,t}}$, see Figure \ref{finterlace}(left) for an illustration. The remaining eigenvalue of $\widetilde{\A_{s,t}}$ is necessarily real, otherwise its complex conjugate would be an eigenvalue as well, violating condition on the number of eigenvalues. It is either  greater than $\max\sigma(A)$ or smaller than $\min\sigma(A)$ and the interlacing property holds.

\end{proof}

\begin{rem}  Note that by elementary calculation $st-s-t\neq 0$ if $(1-s)(1-t)>0$. Though the latter condition implies realness of spectrum of  $\widetilde{\A_{s,t}}$ as well, it does not imply the intertwining of eignenvalues, as will be shown in Example \ref{einterlace}. 
\end{rem}

\begin{rem}
Note that in contrary to \cite{DM} the assumptions do not involve the coefficients $c_j$ but only the location of eigenvalues $\lambda_j$ and the coefficient $m$. 
\end{rem}

\begin{ex}\label{einterlace}

Let $A=\text{diag}(1,2,3,4)$, $u=0.5\cdot[1,1,1,1]^\top$.
For $s=1.1$, $t=1.2$ we have $x_0=-3.37$ and the assumptions of Theorem \ref{tinterlace} are satisfied. The spectrum of the perturbed matrix equals
$$
\sigma(\widetilde{\A_{s,t}})=\set{-3.25, 1.38, 2.50, 3.61}
$$
and it clearly interlaces with the eigenvalues of $A$. The functions  from equation \eqref{Example diagonal real} are plotted in Figure \ref{finterlace} (left).

Let now $s=-2$, $t=-3$.  Then $x_0=1.36$ so \eqref{xolambda} is not satisfied. 
The spectrum  of the perturbed matrix equals
$$
\sigma(\widetilde{\A_{s,t}})=\set{0.86,   2.16, 3.38, 16.10}
$$
 and clearly between $1$ and $2$ there is no eigenvalue of $\widetilde{\A_{s,t}}$. 
The functions in \eqref{Example diagonal real} are plotted in Figure \ref{finterlace} (right).
However, note that the spectrum of $\widetilde{\A_{s,t}}$ is real. Hence, \eqref{xolambda} is not a necessary condition for realness of the spectrum.

\begin{figure}\caption{
The hyperbola (blue) and rational function (red) from equation \eqref{Example diagonal real} for two different choices of the parameters $(s,t)$. The eigenvalues of $\widetilde{\A_{s,t}}$ marked with black crosses. See example \ref{einterlace} for details. }\label{finterlace}

\includegraphics[width=200pt]{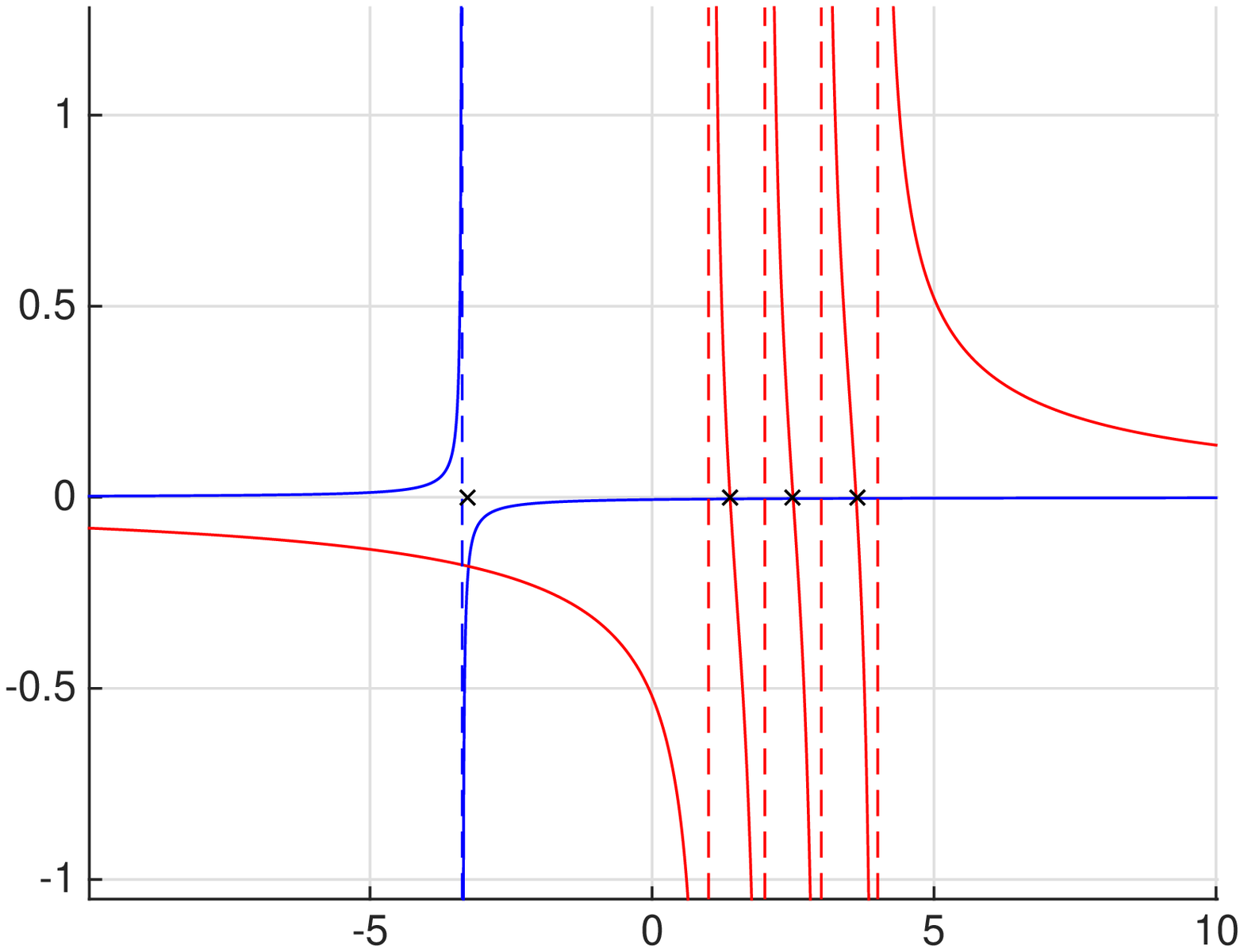}
\includegraphics[width=200pt]{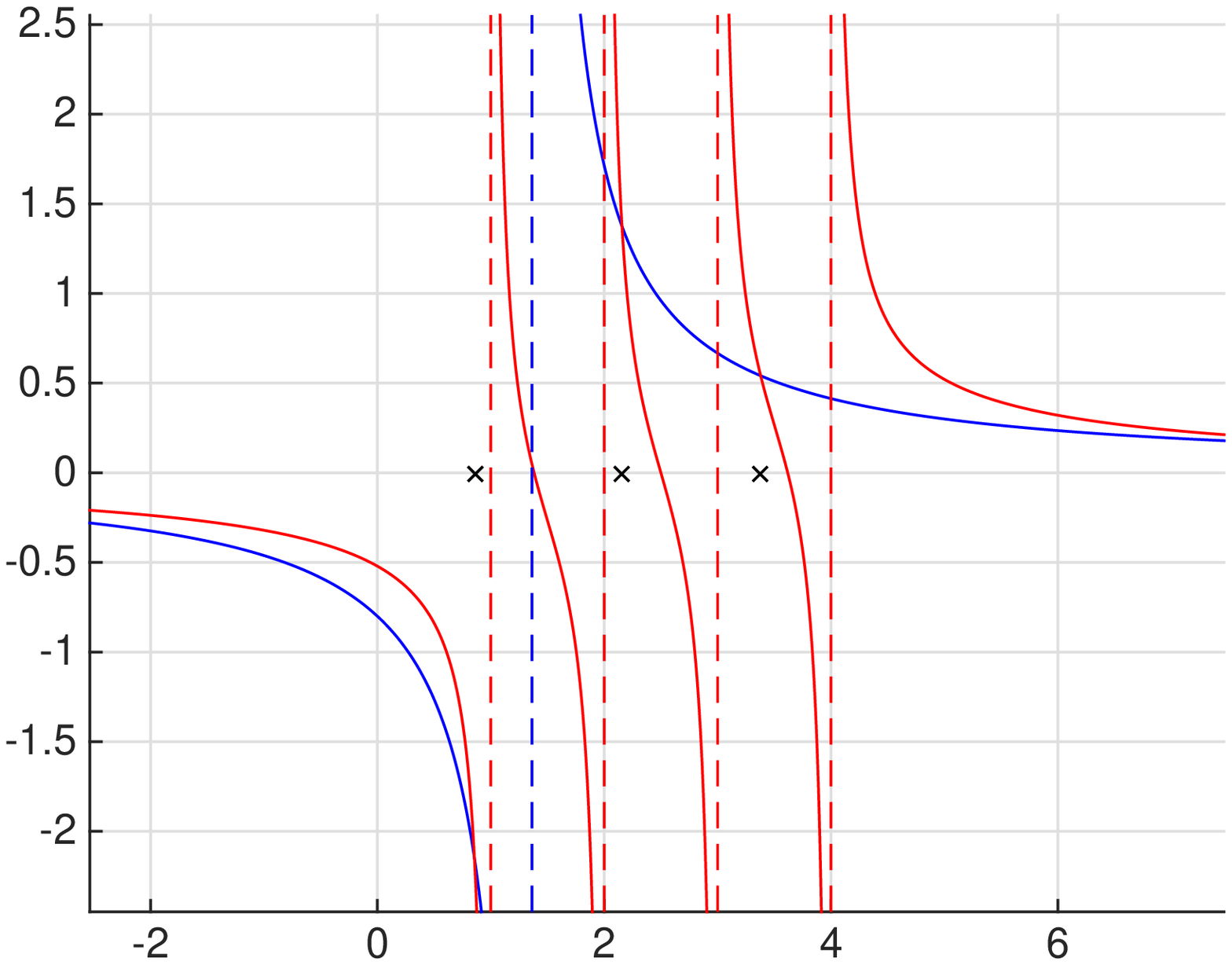}
\end{figure}

Furthermore, for  $s=1.1$, $s=0.9$ we get $x_0=-2.45$, hence the assumptions of Theorem \ref{tinterlace} are satisfied, while $(1-t)(1-s)<0$.
Hence, $(1-t)(1-s)>0$ is also not a necessary condition for realness of the spectrum.  
\end{ex}


\section{Applications in classical and free probability}

\subsection{Transformations of probability measures}
One of the goals of the paper was to provide an operator model of the $\mathbf{t}$-transform of probability measures, defined by Bo\.zejko and Wysocza\'nski \cite{bozejko_wysoczanski98} and extended to  \U-transform  by Krystek and Yoshida in \cite{KrYos2004}. For this aim we recall the definition of the Cauchy transform of a probability mesure and of the distribution of an operator with respect to a vector state.

\begin{defi}[Cauchy transform]\label{def_cauchy}
For a given probability measure $\mu $ on the real line $\R$,
its \textit{{Cauchy transform}}  is a holomorphic function $G_\mu(z)$ defined in the upper half-plane  $ \C^+=\{z\in \C: {\rm Im}\ z >0\}$,  as 
\[
G_{\mu}(z) = \int_{-\infty }^{+\infty } {\frac{d\mu (x)}{z-x}}.
\]
\end{defi}
\begin{defi}[Distribution of operator]\label{def_distribution_of_A}
Let $A$ be a self-adjoint operator (possibly unbounded, but for simplicity of the definition we shall assume $A\in \B(\H)$) and let $u\in \H$ be a normalized vector $\|u\|=1$ (in the domain of $A$ for unbounded $A$). Consider the vector state $\vp_u$ (i.e. positive, normalized functional) on $\B(\H)$, defined by  $\vp_u(B):=\<Bu,u\>$ {for} $B\in \B(\H)$. The spectral theorem implies, that there exists the unique probability measure $\mu_A$ on $\R$, such that 
\begin{equation}\label{phiz}
\vp_u((z-A)^{-1})=\int_{-\infty}^{\infty} \frac{ d\mu_A(x)}{z-x}, \quad  z\in\Comp^+.
\end{equation}
This measure called the \textit{distribution of $A$ with respect to $\vp_u$}.
\end{defi}

Note that for $z\in \C^{+}$ we have 
\begin{equation}\label{GQ}
G_{\mu_A}(z)=\int_{-\infty}^{\infty} \frac{\mu_A(dt)}{z-t}=\vp_u((z-A)^{-1})= \<(z-A)^{-1}u,u\> =Q_u(z),
\end{equation}
so the Weyl function of $A$ (w.r.t.\ $u$) equals the Cauchy transform of $\mu_A$ (i.e. of the distribution of $A$ w.r.t.\ $\vp_u$). 

\medskip
Let us recall the constructions of the {\bf t} and {\U}-transforms of probability measures. 
The main tool for this is the Nevanlinna theorem (\cite{Aaronson} Theorem 6.2.1.), 
which asserts that a function $F:\C^+\mapsto \C^+$ is the reciprocal of the Cauchy transform of a (uniquely determined) probability measure $\mu$ on the real line, i.e.\ $F(z)=\frac1{G_{\mu}(z)}$, if and only if there exist a positive measure $\rho $ and a constant $\alpha \in \R$ such that for
$z\in\Comp^+$
\begin{equation}\label{NevC}
F(z) = \alpha + z + \int_{-\infty }^{+\infty } {1+xz\over x-z} d\rho(x).
\end{equation}
Let $p, q\in \R$ with $q\geq 0$, then, given a probability measure $\mu$ on $\R$ with finite first moment $\displaystyle m:=\int x d\mu(x)$ and with $F(z)=\frac1{G_{\mu}(z)}$ as in \eqref{NevC}, one can show directly that the function 
\begin{equation*}
F_{p,q}(z):=\frac{q}{G_{\mu}(z)} + (1-q)z +(q-p)m =  \frac{q}{G_{\mu}(z)} + \frac{1-q}{G_{\delta_{cm}}(z)}
\end{equation*}
satisfies the condition \eqref{NevC} with $\alpha_{p,q}=q\alpha-(q-p)m$ and $\rho_{p,q}=\alpha\rho$. Consequently, 
 there exists the unique probability measure $U^{p,q}(\mu)$,  satisfying
\[
 F(z)=\frac{1}{G_{U^{p,q}(\mu)} (z)}=\alpha_{p,q} + z + \int_{-\infty }^{+\infty } {1+xz\over x-z} d\rho_{p,q}(x).
\]

\begin{defi}\label{K-Y-transform}
Let $p, q\in \R$ with $q\geq 0$, then for a probability measure $\mu$ on $\R$ with the finite first moment $m:=\int xd\mu(x)$, the \textit{\U-transform} of $\mu$ with the parameters $(p,q)$ is the unique probability measure $U^{p,q}(\mu)$ satisfying 
\begin{equation} \label{u-transform} 
\frac{1}{G_{U^{p,q}(\mu)} (z)}= \frac{q}{G_{\mu}(z)} + (1-q)z +(q-p)m.
\end{equation} 
In the case $p=q=\tau$ the \U-transform coincides with  the $\mathbf{t}$-transform, defined by 
\begin{equation} \label{t-transform}
\frac{1}{G_{\mu_\tau} (z)}= \frac{\tau}{G_{\mu}(z)} + (1-\tau)z,
\end{equation} 
see \cite{bozejko_wysoczanski98,bozejko_wysoczanski01}.\end{defi}

Note the following simple examples: $U^{0,1}(\mu)=\mu$, $U^{p,0}(\mu)=\delta_{pm}$.
Furthermore, both  \U- and $\bf t$-transform coincide on the set of all probability measures with vanishing first moment (i.e. with $m=0$). 


\begin{rem}\label{contfr} It is instructive to observe that the $\mathbf{U}$-transform modifies only the first two parameters in the continuous fraction expansion of $G_\mu(z)$. Namely, if 
\begin{equation}\label{Cauchy transform continued fraction}
G_{\mu}(z) = {1\over\displaystyle z-a_0- {\strut b_0\over\displaystyle
z-a_1- {\strut b_1\over\displaystyle z-a_2-{\strut b_2\over\ddots}}}}
\end{equation}
then 
\begin{equation}\label{U-transform of continued fraction}
G_{U^{p,q}(\mu)}(z) = {1\over\displaystyle z-pa_0- {\strut qb_0\over\displaystyle
z-a_1- {\strut b_1\over\displaystyle z-a_2-{\strut b_2\over\ddots}}}}.
\end{equation}
\end{rem}


\subsection{Operator models for the {\bf{t}}- and {\U}-transforms of measures}

 In this subsection we shall show that the operator deformations defined in Theorem \ref{thm_sa} (i) correspond to the \U- and {\bf t}-transforms of their distributions. 
More precisely, for a self-adjoint operator $A\in \B(\H)$ and a unit vector $u\in \H$ consider the vector state $\vp_u$. Then, to the operator $A$ there corresponds the distribution $\mu_A$ (given by \eqref{phiz}). On the other hand, we consider the operator deformation $A\mapsto \widetilde{A_{s,t}}$, which has the distribution $\mu_{\widetilde{A_{s,t}}}$ (also w.r.t. $\vp_u$). We shall show that in this case we get the equality  $\mu_{\widetilde{A_{s,t}}}=U^{p,q}(\mu_A)$, with $q=(1-s)(1-t)$, $p=1-s-t$. So the \U-transform of $\mu_A$ equals the distribution of the deformation $\widetilde{A_{s,t}}$. In particular we obtain the following diagram 
\[
\begin{CD}
A @>{\vp_u}>> \mu_A \\
@VVV @VV{U^{p,q}}V\\
\widetilde{A_{s,t}}@>{\vp_u}>> \mu_{\widetilde{A_{s,t}}} 
\end{CD} \qquad \Longrightarrow \quad \mu_{\widetilde{A_{s,t}}} =U^{p,q}(\mu_A).
\]

\begin{thm}\label{U-model}
Let $A$ be a self-adjoint, possibly unbounded operator, let $u\in\dom A$ be a normalized vector and let $\mu$ be the distribution of $A$ w.r.t.\ $\vp_u$, see \eqref{phiz}. If $(1-s)(1-t)\geq 0$, then the distribution of the rank two deformation 
$$ 
\widetilde{A_{s,t}}:= A- s\D{Au}{u}-t\D{u}{Au}
$$
 w.r.t.\ $\vp_u$ is the \U-transformation of the distribution $\mu$ with $q=(1-s)(1-t)$, $p=1-s-t$. 
\end{thm}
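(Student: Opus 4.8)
The plan is to reduce the statement to the already-established formula for the Weyl function in Theorem \ref{thm_sa}(i.2) and then invoke the correspondence \eqref{GQ} between Weyl functions and Cauchy transforms. Recall that by \eqref{GQ} the Weyl function $\widetilde{Q_{u}^{s,t}}(z)=\langle(z-\widetilde{A_{s,t}})^{-1}u,u\rangle$ is precisely the Cauchy transform $G_{\mu_{\widetilde{A_{s,t}}}}(z)$ of the distribution of $\widetilde{A_{s,t}}$, while $Q_u(z)=G_\mu(z)$. Thus the whole theorem will follow from an algebraic identity relating reciprocals of Cauchy transforms.

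First I would write down the defining relation \eqref{eq_G_self_adjoint} of Theorem \ref{thm_sa}(i.2), namely
\[
\frac{1}{\widetilde{Q^{s,t}_{u}}(z)}=\frac{(1-s)(1-t)}{Q_u(z)}+(s-st+t)z+stm,
\]
and compare it term by term with the defining relation of the \U-transform in \eqref{u-transform}, which reads
\[
\frac{1}{G_{U^{p,q}(\mu)}(z)}=\frac{q}{G_\mu(z)}+(1-q)z+(q-p)m.
\]
Setting $q=(1-s)(1-t)$ forces $1-q=1-(1-s)(1-t)=s+t-st=s-st+t$, which matches the coefficient of $z$ exactly. Next, with $p=1-s-t$ one computes $q-p=(1-s)(1-t)-(1-s-t)=1-s-t+st-1+s+t=st$, which matches the coefficient of $m$. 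Hence the two reciprocal relations coincide identically as functions on $\Comp^+$.

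It then remains to check the hypotheses needed to legitimately apply the \U-transform machinery. The definition of $U^{p,q}(\mu)$ in Definition \ref{K-Y-transform} requires $q\geq 0$ and a finite first moment $m=\int x\,d\mu(x)$; the first is guaranteed by the standing assumption $(1-s)(1-t)\geq 0$, and the second holds because $u\in\dom A$ makes $m=\langle u,Au\rangle$ finite (indeed this is exactly the $m$ appearing throughout). Since by the Nevanlinna construction preceding Definition \ref{K-Y-transform} the measure $U^{p,q}(\mu)$ is the \emph{unique} probability measure whose reciprocal Cauchy transform is the right-hand side of \eqref{u-transform}, and since $\widetilde{Q_{u}^{s,t}}=G_{\mu_{\widetilde{A_{s,t}}}}$ is the reciprocal of that same expression, uniqueness yields $\mu_{\widetilde{A_{s,t}}}=U^{p,q}(\mu)$.

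The only genuine subtlety, and the step I expect to require the most care, is the unbounded case: Theorem \ref{thm_sa} was stated for closed densely defined self-adjoint $A$, but the vectors $Au$ appear in the definition of $\widetilde{A_{s,t}}$, so I must confirm that $\widetilde{A_{s,t}}$ is again self-adjoint (or at least that its distribution is a genuine probability measure with Cauchy transform given by the stated formula) and that $u$ lies in its domain, so that \eqref{phiz} applies. Since $\widetilde{A_{s,t}}$ differs from $A$ by a rank-two operator built from $u$ and $Au$, for $u\in\dom A$ this is a finite-rank symmetric perturbation and self-adjointness is preserved; once this is secured the map $z\mapsto\widetilde{Q_{u}^{s,t}}(z)$ is automatically a reciprocal-Nevanlinna function on $\Comp^+$, and the matching of coefficients above completes the identification.
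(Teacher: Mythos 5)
Your core argument is exactly the paper's: apply Theorem \ref{thm_sa}(i.2), use \eqref{GQ} to replace $Q_u$ by $G_\mu$, match the coefficients of \eqref{eq_G_self_adjoint} against \eqref{u-transform} (with $q=(1-s)(1-t)$ one has $1-q=s+t-st$, and with $p=1-s-t$ one has $q-p=st$), and conclude by uniqueness of the measure in the Nevanlinna/Krystek--Yoshida construction. This is, step for step, the published proof, and your coefficient check and your verification of the hypotheses $q\geq 0$ and $m$ finite are correct.

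The flaw is in your final paragraph. The perturbation $-s\,\D{Au}{u}-t\,\D{u}{Au}$ is \emph{not} symmetric unless $s=t$: since $\bigl(\D{w}{u}\bigr)^{*}=\D{u}{w}$, for real $s,t$ one has $\bigl(s\,\D{Au}{u}+t\,\D{u}{Au}\bigr)^{*}=s\,\D{u}{Au}+t\,\D{Au}{u}$, which coincides with the original operator only when $s=t$. The paper stresses precisely this in Remark \ref{remark on s,t real}: for $s\neq t$ the deformation $\widetilde{A_{s,t}}$ is not self-adjoint, yet the hypothesis $(1-s)(1-t)\geq 0$ allows $s\neq t$, so your route of securing \eqref{phiz} via ``self-adjointness is preserved'' breaks down on a case the theorem explicitly covers. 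The correct reading --- and the one implicit in the paper's proof --- is that the ``distribution'' of $\widetilde{A_{s,t}}$ is the probability measure whose Cauchy transform equals the Weyl function $\widetilde{Q_u^{s,t}}$; the \emph{existence} of such a measure is not assumed in advance but is delivered by the very identity you derived, because the right-hand side of \eqref{eq_G_self_adjoint} equals $1/G_{U^{p,q}(\mu)}$ by the Krystek--Yoshida construction, and $G_{U^{p,q}(\mu)}$ is by that construction the Cauchy transform of a genuine probability measure. Your proof is repaired simply by deleting the self-adjointness claim and letting the identity $\widetilde{Q_u^{s,t}}=G_{U^{p,q}(\mu)}$, together with uniqueness of Cauchy transforms, define and identify $\mu_{\widetilde{A_{s,t}}}$ as $U^{p,q}(\mu)$.
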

\begin{proof}
For  $s,t\in \R$ with $(1-s)(1-t)\geq 0$ the Theorem \ref{thm_sa} (i) guarantees that the Weyl function $\widetilde{Q^{s,t}_{u}} (z)$ of the operator $\widetilde{A_{s,t}}$ satisfies the equation 
\[
\frac{1}{\widetilde{Q^{s,t}_{u}} (z)} = \frac{(1- s)(1-t)}{Q_{u}(z)} + (s+t-st)z +stm. 
\]
On the other hand, the Weyl function $Q_u$ of the self-adjoint operator $A$ equals the Cauchy transform $G_{\mu}$ of its distribution $\mu$ (w.r.t. $\vp_u$). Hence
\[
\frac{1}{\widetilde{Q^{s,t}_{u}} (z)} = \frac{(1- s)(1-t)}{G_{\mu}(z)} + (s+t-st)z +stm = \frac{1}{G_{U^{p,q}(\mu)}(z)}, 
\]
where the second equality is the consequence of the Krystek-Yoshida construction of the \U-transform $U^{p,q}(\mu)$ of the measure $\mu$. Therefore $\widetilde{Q^{s,t}_{u}}=G_{U^{p,q}(\mu)}$, and, since the Cauchy transform determines the measure uniquely, the proof is finished.
\end{proof}


\begin{rem}\label{remark on s,t real}
We proved that the Weyl function  $\widetilde{Q^{s,t}_{u}} (z)$ of the rank two deformation $\widetilde{A_{s,t}}$ of a self-adjoint operator $A$ is the Cauchy transform of a probability measure. 
This is of particular interest in the case $s\neq t$, in which the deformation $\widetilde{A_{s,t}}$ is \textit{not self-adjoint}, but nevertheless its Weyl function w.r.t. $u$ is the Cauchy transform of the probability measure  $U^{p,q}(\mu)$.

\end{rem}

With the notation of the Theorem \ref{U-model} the first moment of $\mu$ equals $m=\<Au,u\>$. 
Since for measures with vanishing first moment both \U- and {\bf t}-transforms agree, we get the following.

\begin{cor}
Let $A$ be a self-adjoint, possibly unbounded operator, let $u\in\dom A$ be a normalized vector  and let $\mu$ be the distribution of $A$ w.r.t. $\vp_u$. Moreover, let $s,t\in \C$ be so that $\tau_{s,t}:=(1-s)(1-t)>0$ and assume that $m=\seq{Au,u}=0$. Then the distribution of the rank two deformation 
$$
\widetilde{A_{s,t}}:= A- s\D{Au}{u}-t\D{u}{Au}
$$
is  the $\mathbf{t}$-transform of the measure $\mu$ 
with the parameter $\tau_{s,t}$. 
\end{cor}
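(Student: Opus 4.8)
The plan is to deduce this Corollary directly from Theorem \ref{U-model} by specializing the parameters, so essentially no new computation is required. First I would invoke Theorem \ref{U-model}: under the hypotheses that $A=A^*$, $u\in\dom A$ is normalized, and $(1-s)(1-t)\geq 0$, the distribution of $\widetilde{A_{s,t}}$ with respect to $\vp_u$ is precisely $U^{p,q}(\mu)$ with $q=(1-s)(1-t)$ and $p=1-s-t$. Since the Corollary assumes $\tau_{s,t}=(1-s)(1-t)>0$, the hypothesis of Theorem \ref{U-model} is satisfied and we obtain $\mu_{\widetilde{A_{s,t}}}=U^{p,q}(\mu)$ with $q=\tau_{s,t}$.

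The second and only substantive step is to observe that under the extra assumption $m=\seq{Au,u}=0$ the \U-transform collapses to the $\mathbf{t}$-transform. This is exactly the remark recorded just after Definition \ref{K-Y-transform}, namely that the \U- and $\mathbf{t}$-transforms coincide on measures with vanishing first moment. Concretely, I would plug $m=0$ into the defining equation \eqref{u-transform}: the term $(q-p)m$ vanishes, leaving
\[
\frac{1}{G_{U^{p,q}(\mu)}(z)}=\frac{q}{G_\mu(z)}+(1-q)z,
\]
which is precisely the defining equation \eqref{t-transform} of the $\mathbf{t}$-transform with parameter $\tau=q=\tau_{s,t}$. Hence $U^{p,q}(\mu)=\mu_{\tau_{s,t}}$, and combining with the previous step gives $\mu_{\widetilde{A_{s,t}}}=\mu_{\tau_{s,t}}$, the $\mathbf{t}$-transform of $\mu$ with parameter $\tau_{s,t}$.

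There is no real obstacle here, as the statement is a corollary in the literal sense: the analytic content lives entirely in Theorem \ref{U-model} (which in turn rests on Theorem \ref{thm_sa}(i) and the Krystek-Yoshida construction). The only point demanding a moment of care is the bookkeeping of parameters, namely confirming that $q=\tau_{s,t}$ is consistent with the value $q=(1-s)(1-t)$ furnished by Theorem \ref{U-model}, and that the condition $q\geq 0$ required for $U^{p,q}$ to be well defined is met — indeed it holds because $\tau_{s,t}>0$ by assumption. I would also note in passing that the first moment of $\mu$ genuinely equals $\seq{Au,u}$, as already recorded in the text preceding the Corollary, so that the hypothesis $m=0$ is the correct vanishing-first-moment condition.
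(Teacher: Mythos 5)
Your proposal is correct and follows exactly the paper's own route: the paper derives this corollary in the two sentences immediately preceding it, by invoking Theorem \ref{U-model} (giving $q=(1-s)(1-t)$, $p=1-s-t$) and observing that the \U- and $\mathbf{t}$-transforms agree on measures with vanishing first moment $m=\seq{Au,u}=0$. Your explicit substitution of $m=0$ into \eqref{u-transform} to recover \eqref{t-transform} with $\tau=q=\tau_{s,t}$ is precisely the intended bookkeeping, so nothing is missing.
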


\subsection{Jacobi matrix models}
In this subsection we shall study our transformations acting on the Jacobi tridiagonal matrices. In fact, the Jacobi tridiagonal matrices are closely related to continuous fraction expansions  of the Cauchy transforms of probability measures, and to the associated orthogonal polynomials.

For simplicity we shall consider probability measures with compact supports on $\R$, which guarantees the existence of all moments. Then, given such measure $\mu$, there exists the family $\{P_n: n\geq 0 \}$ of polynomials orthogonal with respect to $\mu$ and normalized by $\|P_n\|^2=\int|P_n(x)|^2d\mu(x)=1$. If the Cauchy transform $G_{\mu}$ is of the form \eqref{Cauchy transform continued fraction}, then the polynomials satisfy the recurrence relation of the form 
\begin{equation}\label{recurence for orthogonal polynomials}
P_0(x)=1, \quad xP_{n}(x)=b_nP_{n+1}(x)+a_nP_{n}(x)+b_{n-1}P_{n-1}(x), \quad \text{for} \quad n\geq 0.
\end{equation}
The coefficients $b_n$ are positive and $a_n$ are real and bounded, (with the convention $b_{-1}=0$). Let us introduce the operator $J$, acting on $L^2(\R, d\mu(x))$ as multiplication by the variable $x$, then, in the orthonormal basis $e_n:=P_n$ ($n\in\N$), it has the tridiagonal Jacobi matrix 
\begin{equation}\label{Jacobi matrix}
J=\left[ \begin{array}{cccccccc}
a_0 & b_0 & 0 & 0 &0 &\dots  \\
b_0 & a_1 & b_1 & 0& 0& \dots \\
0 & b_1 & a_2 & b_2& 0& \dots \\
0 & 0 & b_2 &a_3 & b_3 & \dots\\
\vdots &\vdots &\vdots &\vdots &\vdots & \ddots
\end{array}\right].
\end{equation}
Moreover, the Weyl function of $J$ with respect to the vector $e_0$ and the Cauchy transform of $\mu$ coincide: 
\[
Q_{e_0}(z)=\<Je_0,e_0\>=\int \frac{\mu(dx)}{z-x}=G_\mu(z).
\] 

\subsubsection{Jacobi matrix model for the {\U}-transform}
For the Jacobi matrix \eqref{Jacobi matrix} consider the "antidiagonal" transformation  $J\mapsto \widetilde{J_{s,t}}=J-s(u\otimes Ju)-t(Ju\otimes u)$, for $u=e_0$, given in Theorem \ref{thm_sa} (i), then: 
\begin{equation}\label{Jacobi antidiagonal}
\begin{array}{rcl}
\widetilde{J_{s,t}}&:=&J-s(e_0\otimes Je_0)-t(Je_0\otimes e_0)\\
&=& J-(s+t)a_0(e_0\otimes e_0)-sb_0(e_0\otimes e_1)-tb_0(e_1\otimes e_0),
\end{array}
\end{equation}
and hence
\begin{equation}
\widetilde{J_{s,t}}=\left[ \begin{array}{cccccccc}
(1-s-t)a_0 & (1-s)b_0 & 0 & 0 &0 &\dots  \\
(1-t)b_0 & a_1 & b_1 & 0& 0& \dots \\
0 & b_1 & a_2 & b_2& 0& \dots \\
0 & 0 & b_2 &a_3 & b_3 & \dots\\
\vdots &\vdots &\vdots &\vdots &\vdots & \ddots
\end{array}\right].
\end{equation}
As we have seen in Theorem \ref{U-model}, the distribution of $\widetilde{J_{s,t}}$ (with respect to the vacuum state $\vp_0$ given by $e_0$) is the $U^{p,q}$-transform of the distribution of $J$ (with $p=1-s-t$ and $q=(1-s)(1-t)$): 
\[
\vp_0((z-\widetilde{J_{s,t}})^{-1})=\int\frac{U^{p,q}(\mu)(dx)}{z-x}.
\]

In the particular case if $a_0=\<Je_0,e_0\>=0$ we get that the distribution $U^{p,q}(\mu)$ of the operator $\widetilde{J_{s,t}}$, (non self-adjoint if $s\neq t$), is the {\bf t}-transform of the distribution $\mu$ of $J$. 

\subsection{{\bf W}-transform of measures and related operator model}

The Theorem \ref{thm_sa} (ii) provides a possibility of defining another  transformation of  probability measures via the following lemma.

\begin{lemma}
Let $\mu$ be a Borel probability measure on $\Real$ with the first moment $m$ finite. 
Then for each $s,t\in\Real$ the function
$$
F(z)=s + \frac{1+t(z^2G_{\mu}(z) -m-z)}{(1-tm+tz)G_{\mu}(z)-t}
$$
is a reciprocal of a Cauchy transform of a probability measure.
\end{lemma}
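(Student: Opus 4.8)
The plan is to recognize $F$ as the reciprocal of the Weyl function $\widehat{Q_{u}^{s,t}}$ of the \emph{diagonal} rank two deformation of Theorem~\ref{thm_sa}(ii), and then to read off positivity from the self-adjointness of that deformation rather than checking it by hand. Concretely, I would first realize $\mu$ by an operator: let $A$ be multiplication by the variable $x$ on $L^2(\R,d\mu)$ and let $u=\mathbf 1$ be the constant function, which is normalized since $\mu$ is a probability measure. By \eqref{GQ} the Weyl function $Q_u(z)=\seq{(z-A)^{-1}u,u}$ is exactly $G_\mu(z)$, while $m=\seq{Au,u}=\int x\,d\mu(x)$ is the first moment in the statement. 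With these identifications the right hand side of \eqref{eq_G_self_adjointD} is precisely the function $F$ of the lemma, i.e.\ $F=1/\widehat{Q_{u}^{s,t}}$. Forming $\widehat{A_{s,t}}=A-s(\D uu)-t(\D{Au}{Au})$, we note that for $s,t\in\R$ and $A=A^*$ the two subtracted rank-one operators are bounded and self-adjoint, so $\widehat{A_{s,t}}$ is self-adjoint. Hence by Definition~\ref{def_distribution_of_A} its distribution $\nu$ with respect to $\vp_u$ is a genuine probability measure whose Cauchy transform equals $\widehat{Q_{u}^{s,t}}$, and Theorem~\ref{thm_sa}(ii) gives $G_\nu=\widehat{Q_{u}^{s,t}}=1/F$; thus $F=1/G_\nu$ is the reciprocal of a Cauchy transform of a probability measure.

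The obstacle in this argument is that forming $\widehat{A_{s,t}}$ requires $Au$ to be a genuine vector, i.e.\ $u\in\dom A$, which in this realization is equivalent to $\int x^2\,d\mu(x)<\infty$, whereas the lemma assumes only a finite \emph{first} moment. I would close this gap by approximation: take compactly supported probability measures $\mu_n\to\mu$ weakly with $m_n:=\int x\,d\mu_n\to m$ (for instance the normalized restrictions of $\mu$ to $[-n,n]$). For each $\mu_n$ the first paragraph applies and yields a probability measure $\nu_n$ with $1/G_{\nu_n}=F_n$, where $F_n$ is built from $G_{\mu_n}$ and $m_n$ exactly as $F$ is from $G_\mu$ and $m$. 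Since $G_{\mu_n}\to G_\mu$ and $m_n\to m$, one gets $G_{\nu_n}=1/F_n\to 1/F$ locally uniformly on $\Comp^+$; because the normalization $zG_{\nu_n}(z)\to1$ is uniform on Stolz angles the family $\{\nu_n\}$ stays tight, so by the continuity theorem for Cauchy transforms the limit $1/F$ is itself the Cauchy transform of a probability measure, which is the claim.

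An alternative, purely analytic route would verify the hypotheses of the Nevanlinna theorem \eqref{NevC} for $F$ directly. Clearing denominators one has the identity $\bigl[1+t(z^2G_\mu-m-z)\bigr]-z\bigl[(1-tm+tz)G_\mu-t\bigr]=(1-tm)(1-zG_\mu)$, which together with the first-moment asymptotics $zG_\mu(z)\to1$ and $z\bigl(zG_\mu(z)-1\bigr)\to m$ as $z\to\infty$ nontangentially shows $F(z)=z+O(1)$ and $F(z)/z\to1$, pinning down the correct normalization in \eqref{NevC}. The genuinely hard point on this route is the Pick property $\operatorname{Im}F(z)\ge 0$ on $\Comp^+$: viewed as a map of $G_\mu$, the function $F$ is a Möbius transformation with $\lvert\det\rvert=(1-tm)^2$, but its coefficients are complex and $z$-dependent, so the sign of the imaginary part is not transparent. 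This positivity is exactly what the self-adjointness of $\widehat{A_{s,t}}$ delivers for free, which is why I would make the operator model the backbone of the proof and relegate the direct computation to a remark.
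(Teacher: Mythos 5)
Your first paragraph is, in essence, the paper's entire proof: the authors simply assert that some self-adjoint $A$ and unit vector $u\in\dom A$ realize $\mu$ as a distribution, and then invoke Theorem \ref{thm_sa}(ii) to identify $F$ as the reciprocal Cauchy transform of the distribution of $\widehat{A_{s,t}}=A-s(\D uu)-t(\D{Au}{Au})$. Where you genuinely depart from the paper is your second paragraph, and your worry there is legitimate: $u\in\dom A$ forces $\int x^2\,d\mu(x)=\norm{Au}^2<\infty$, so the ``well known'' realization the paper appeals to exists only for measures with finite \emph{second} moment, whereas the lemma assumes only a finite first moment (and the perturbation $t\,\D{Au}{Au}$ makes no sense otherwise). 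The paper's proof silently glosses over this; your truncation argument repairs it, and with the right ingredients: since $\mu_n\le \mu(\cdot)/\mu([-n,n])$, the first moments are uniformly integrable, whence $z\bigl(zG_{\mu_n}(z)-1\bigr)\to m_n$ uniformly in $n$ on Stolz angles, and combined with your identity $\bigl[1+t(z^2G_{\mu_n}-m_n-z)\bigr]-z\bigl[(1-tm_n+tz)G_{\mu_n}-t\bigr]=(1-tm_n)(1-zG_{\mu_n})$ this gives $F_n(iy)=iy+O(1)$ uniformly in $n$, hence the uniform normalization, tightness of $(\nu_n)$, and via the continuity theorem a probability measure $\nu$ with $G_\nu=1/F$. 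One detail you should write out rather than gesture at: for $t\neq0$ both the numerator and the denominator of the correction term $F_n-z-s$ vanish at infinity, so the needed uniform lower bound on the denominator must come from $z\bigl[(1-tm_n+tz)G_{\mu_n}(z)-t\bigr]\to1$ uniformly in $n$ (using the uniform first-moment asymptotics), not from crude bounds on $G_{\mu_n}$ alone; the degenerate case $1-tm=0$, where $F$ collapses to $z+s$ and $\nu=\delta_{-s}$, also deserves a sentence. With those points made explicit, your proof is correct and strictly more complete than the paper's two-line argument, and your closing observation — that $F$ is a M\"obius transform of $G_\mu$ with $z$-dependent complex coefficients, so the Pick property is opaque to direct computation — accurately explains why the operator model, which both you and the authors adopt, is the natural backbone here.
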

 
 \begin{proof}
 It is well known that there exists a self-adjoint operator $A$ in a Hilbert space and a unit vector $u\in\dom A$ such that the distribution of $A$ with respect to $u$ equals $\mu$. By Theorem \ref{thm_sa}(ii) $F(z)$ is the reciprocal of the distribution of $A-s \D uu -t \D{Au}{Au}$ with respect to $u$.
 \end{proof}

\begin{defi}
If $\mu$ is a Borel probability measure on $\Real$ with the first moment $m$ finite and $s,t\in\Real$, then by the \WT-transform of $\mu$ we define the unique Borel probability measure $W^{s,t}(\mu)$ on $\Real$ for which  the Cauchy transform $G_{W^{s,t}(\mu)}$ satisfies the equation 
\begin{equation}\label{W-transform}
\frac{1}{G_{W^{s,t}(\mu)}(z)} = s + \frac{1+t(z^2G_{\mu}(z) -m-z)}{(1-tm+tz)G_{\mu}(z)-t}.
\end{equation}
\end{defi}

We present below two simple examples comparing the two deformations: $\mu \mapsto U^{p,q}(\mu)$ for $p=1-s-t$, $q=(1-s)(1-t)$, and $\mu \mapsto W^{s,t}(\mu)$. One more example will be treated in Section \ref{sec_meixner}. 

\begin{ex} \label{ex_delta}
Both \U- and \WT-tranform preserve the class of atomic measures. 
Indeed, for $\mu=\delta_a$ we have $m=a$ and $\displaystyle G_{a}(z)=\frac{1}{z-a}$. Hence 
$$
\widetilde{U^{p,q}}(\delta_a)=\delta_{pa}\ \text{ and }\ \displaystyle W^{s,t}(\delta_a) = \delta_{a-s-ta^2}.
$$
The latter formula follows directly from \eqref{W-transform}, which simplifies to
\[
G_{W^{s,t}(\delta_a)}(z)=\frac{1}{z-(a-s-ta^2)}.
\]
\end{ex}

\begin{ex} \label{ex_bernoulii}
For the Bernoulli law $\mu:=\frac{1}{2}(\delta_{-1}+\delta_{1})$ we have $m=0$, so the \U-transform reduces to the {\bf t}-transform. General formulas for the {\bf t}-transform of two-point measure has been given in \cite[Example 3.5]{wojakowski07}, and applied to the Bernoulli law give
$$\widetilde{U^{p,q}} (\mu)= \frac12\big(\delta_{-\sqrt{q}}+\delta_{\sqrt{q}}\big).$$

On the other hand, the Cauchy transform is $ G_{\mu}(z)=\frac{1}{2}\left(\frac{1}{z-1}+\frac{1}{z+1} \right)$, hence the \WT-transform is given by 
\[
G_{W^{s,t}(\mu)}(z)= \frac{z+t}{z^2+(s+t)z+st-1}=\frac{A_{s,t}}{z-x}+\frac{B_{s,t}}{z-y},
\]
with 
\[
A_{s,t}=\frac{x+t}{x-y}, \quad B_{s,t}=\frac{y+t}{y-x},
\]
where $x\neq y$ are the two real solutions of the quadratic equation $z^2+(s+t)z+st-1=0$ (which has the positive discriminant $\Delta=(s-t)^2+4$):
\[
x:=x_{s,t}=\frac{-(s+t)+\sqrt{(s-t)^2+4}}{2}, \quad y:=y_{s,t}=\frac{-(s+t)-\sqrt{(s-t)^2+4}}{2}.
\]
Therefore we get $W^{s,t}(\mu) = A_{s,t}\delta_{x_{s,t}}+B_{s,t}\delta_{y_{s,t}}$. In the particular case $s=t$ we obtain $x=1-s$, $y=-1-s$ and $A_{s,t}=B_{s,t}=\frac{1}{2}$, so that the atoms are shifted by $s$ and then $W^{s}(\mu) = \frac{1}{2}(\delta_{-1-s}+\delta_{1-s})$. 
\end{ex}

\subsubsection{Jacobi matrix model for the {\WT}-transform}
For the Jacobi matrix \eqref{Jacobi matrix} consider the deformation  $J\mapsto \widehat{J_{s,t}}=J-s(u\otimes u)-t(Ju\otimes Ju)$, for $u=e_0$, given in Theorem \ref{thm_sa} (ii), then: 
\begin{equation}\label{Jacobi diagonal}
\begin{array}{rcl}
\widehat{J_{s,t}}&:=&J-s(e_0\otimes e_0)-t(Je_0\otimes Je_0)\\
&=& J-(s+ta_0^2)(e_0\otimes e_0)-tb_0^2(e_1\otimes e_1)-ta_0b_0(e_0\otimes e_1+e_1\otimes e_0).
\end{array}
\end{equation}
Using $a_0=m$ the deformed matrix can be written as
\begin{eqnarray*}
\widehat{J_{s,t}}&=& \left[ \begin{array}{ccccccc}
a_0(1 - ta_0) -s& b_0(1-t a_0) & 0 & 0 &  \\
b_0(1- t a_0) & a_1 -t b_0^2 & b_1 & 0& \\
0 & b_1 & a_2 &  \ddots&\\
0&0 & \ddots & \ddots & 
\end{array}\right]\\
&=& \left[ \begin{array}{ccccccc}
m(1 - tm) -s& b_0(1-t m) & 0 & 0 &  \\
b_0(1- t m) & a_1 -t b_0^2 & b_1 & 0& \\
0 & b_1 & a_2 &  \ddots&\\
0&0 & \ddots & \ddots & 
\end{array}\right].
\end{eqnarray*}
For the vector state $\vp_0$, given by $u=e_0$, we get 
\[
\vp_0((z-\widehat{J_{s,t}})^{-1})= \langle (z-\widehat{J_{s,t}})^{-1}e_0, e_0 \rangle = \int \frac{W_{s,t}(\mu)(dx)}{z-x}, 
\]
so the deformation $J\mapsto \widehat{J_{s,t}}$ gives the Jacobi matrix model for the {\WT}-transform.

\subsection{Transformations of the free Meixner class} \label{sec_meixner}

We end this section with another new result, which is the decription of the behaviour of the \textit{free Meixner class } under the \U-transform and describe the \U- and \WT-transforms of the Wigner law.

In classical probability the Meixner laws form the class of probability measures, whose orthogonal polynomials $p_n(x)$ are the solutions of the equation of the form  $\displaystyle \sum_{n=0}^{\infty}p_n(x)y^n=A(y)e^{xB(y)}$, for given analytic functions $A, B$. It contains the classical laws:  Gaussian, Poisson, gamma, Pascal, binomial and hyperbolic secant (see \cite{andrews-askey} and \cite{bozejko_bryc06}). 

The \textit{free} Meixner laws are analogues of the above in free probability (developed by Voiculescu in mid 80-thies of the last century, c.f. \cite{VDN}), and contain the free Gaussian (Wigner semicircle) law, free Poisson (Marchenko-Pastur) law, free Gamma and free binomial law. Their orthogonal polynomials satisfy the recursion with constant coefficients. The free Meixner class has been described by Saitoh and Yoshida \cite{saitoh_yoshida} as the class of probability measures $\mu_{\bf u}$ on $\R$ depending on four parameters ${\bf u}:=(\gamma, a, b, c)\in \R^4$ with $b, c \geq 0$, 
which orthogonalize the family of polynomials $P_n(x):=P^{\bf u}_n(x)$ given by the following recurence:
\begin{equation}
P_0(x):=c, \ P_{1}(x):=x-\gamma, \ \text{and}\  P_{n+1}(x)=(x-a)P_{n}(x)-bP_{n-1}(x), \ \text{for}\ n\geq 1.
\end{equation}
As shown in \cite{saitoh_yoshida} the measure $\mu_{\bf u}$ has the absolutely continuous part supported on the interval $[a-2\sqrt{b}, a+2\sqrt{b}]$, with the density function 
\begin{equation}\label{Meixner class}
\mu_{\bf u}(dx):=\frac{c\sqrt{4b-(x-a)^2}}{2\pi f(x)}=\frac{c\sqrt{4b-y^2}}{2\pi g(y)}=\mu_{\bf u}(dy), 
\end{equation}
where $f(x):=(1-c)(x-a)^2+(c-2)(\gamma-a)(x-a)+(\gamma-a)^2+bc^2:=g(y)$, with $y:=x-a$. There is no singular part and the apperance of atoms is ruled by the following properties: 
\begin{enumerate}
\item if $f$ has two real roots $x_1\neq x_2$, i.e. the discriminant $\displaystyle \Delta_g:=c^2[(\gamma-a)^2-4b(1-c)]>0$ is positive, then the atoms are in $x_1$ and  $x_2$;
\item if $c=1$ and $\gamma \neq a$, i.e. $f$ is a linear function with the root $\displaystyle x_0=\gamma+\frac{bc^2}{\gamma -a}$, then the atom is in $x_0$. 
\end{enumerate}
Otherwise, there is no atoms, and the measure is absolutely continuous. 


The Cauchy transform of such measure $\mu_{\bf u}$ has the following  continued fraction expansion:
\begin{equation}\label{Meixner class continued fraction}
G_{\mu_{\bf u}}(z) = {1\over\displaystyle z-\gamma- {\strut bc\over\displaystyle
z-a- {\strut b\over\displaystyle z-a-{\strut b\over\ddots}}}}
\end{equation}
\begin{prop}
The free Meixner class is invariant under the {\U}-transform $\mu_{\bf u}\mapsto \widetilde{U^{s,t}}(\mu_{{\bf u}})$ for $(1-s)(1-t)>0$. In particular, 
$$
(\gamma, a,b,c)={\bf u}\longmapsto {\bf u}_{s,t}:=\big((1-s-t)\gamma,\ a,\ b,\ c(1-s)(1-t)\big).
$$
\end{prop}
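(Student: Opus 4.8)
The plan is to exploit the continued fraction characterization of the free Meixner class together with Remark \ref{contfr}, which tells us exactly how the \U-transform acts on a continued fraction expansion. First I would read off from \eqref{Meixner class continued fraction} that the Cauchy transform $G_{\mu_{\mathbf u}}$ is the continued fraction \eqref{Cauchy transform continued fraction} with coefficients $a_0=\gamma$, $b_0=bc$, and $a_n=a$, $b_n=b$ for all $n\geq1$. Since $\mu_{\mathbf u}$ is compactly supported (its absolutely continuous part lives on $[a-2\sqrt b,a+2\sqrt b]$ and there are at most two atoms), all moments exist; in particular the first moment is finite, so $U^{p,q}(\mu_{\mathbf u})$ is well defined for $p=1-s-t$ and $q=(1-s)(1-t)\geq0$.

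Next I would invoke Remark \ref{contfr}: according to \eqref{U-transform of continued fraction}, the \U-transform replaces $a_0=\gamma$ by $p\gamma=(1-s-t)\gamma$ and $b_0=bc$ by $qbc=(1-s)(1-t)bc$, while leaving every remaining coefficient $a_n$ and $b_n$ ($n\geq1$) untouched. Thus $G_{U^{p,q}(\mu_{\mathbf u})}$ is the continued fraction whose first diagonal entry is $(1-s-t)\gamma$, whose first off-diagonal entry is $(1-s)(1-t)bc$, and whose tail is $a_n=a$, $b_n=b$ for $n\geq1$.

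Finally I would match this against the general form \eqref{Meixner class continued fraction} for a free Meixner measure $\mu_{\mathbf u'}$ with $\mathbf u'=(\gamma',a',b',c')$. Equality of the tails forces $a'=a$ and $b'=b$; equality of the first diagonal entries forces $\gamma'=(1-s-t)\gamma$; and equality of the first off-diagonal entries forces $b'c'=(1-s)(1-t)bc$, hence $c'=(1-s)(1-t)c$. Since the Cauchy transform determines the measure uniquely, this yields $U^{p,q}(\mu_{\mathbf u})=\mu_{\mathbf u_{s,t}}$ with $\mathbf u_{s,t}=\big((1-s-t)\gamma,\,a,\,b,\,c(1-s)(1-t)\big)$, as claimed.

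The computation itself is immediate once the right viewpoint is adopted; the only point genuinely requiring attention is that $\mathbf u_{s,t}$ must still satisfy the defining constraints $b,c\geq0$ of the class, so that the right-hand side really is a member of the free Meixner family rather than merely a formal continued fraction. The parameter $b$ is unchanged, and $c'=(1-s)(1-t)c\geq0$ precisely because $c\geq0$ and, by hypothesis, $(1-s)(1-t)>0$; this is exactly where the standing assumption on $s,t$ is used.
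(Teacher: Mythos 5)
Your proof is correct and follows essentially the same route as the paper: both apply Remark \ref{contfr} (formula \eqref{U-transform of continued fraction}) to the Meixner continued fraction \eqref{Meixner class continued fraction} and read off the parameter map $\gamma\mapsto(1-s-t)\gamma$, $c\mapsto(1-s)(1-t)c$ with $a,b$ unchanged. Your added checks --- finiteness of the first moment via compact support, and that $c(1-s)(1-t)\geq 0$ so the image genuinely satisfies the defining constraints of the class, which is where $(1-s)(1-t)>0$ enters --- are details the paper leaves implicit, and they sharpen rather than change the argument.
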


\begin{proof}
The transformation $\mu_{\bf u}\mapsto \widetilde{U^{s,t}}(\mu_{{\bf u}})$, viewed through  the Cauchy transform, has the following continued fraction expansion, see \eqref{U-transform of continued fraction},
\begin{equation}
G_{\widetilde{U^{s,t}}(\mu_{{\bf u}})}(z) = {1\over\displaystyle z-(1-s-t)\gamma- {\strut bc(1-s)(1-t)\over\displaystyle
z-a- {\strut b\over\displaystyle z-a-{\strut b\over\ddots}}}}.
\end{equation}
so that it maps $\gamma \longmapsto (1-s-t)\gamma$, $c\longmapsto c(1-s)(1-t)$.
Therefore,  for $(1-s)(1-t)>0$, the transformed measure $\widetilde{U^{s,t}}(\mu_{{\bf u}})$ is again in the free Meixner class. 
\end{proof}

Special cases of the free Meixner laws are the Wigner law for ${\bf u}=(0,0,1,1)$ and the Marchenko-Pastur law (the free Poisson distribution with jump size $\alpha$ and rate $\lambda$) for ${\bf u}=(\alpha \lambda, \alpha(1+\lambda), \alpha^2\lambda,1)$. We shall concentrate on describing the {\U} and {\WT}-transforms of the first of them.

\begin{ex}[Wigner  semicircle law] \label{ex_wigner}
The Wigner semi-circle law appears in the central limit theorem for free probability, and it is the absolutely continuous probability measure with density 
\[
\sigma(dx)=\frac{1}{2\pi}\sqrt{4-x^2} \quad \text{for}\quad -2\leq x \leq 2.
\]
Its Cauchy transform is 
$$
G_\sigma (z)= \frac{z-z\sqrt{1-\frac{4}{z^2}}}{2}.
$$
The Cauchy transform is well-defined for $z\in \C^+$, and the branch of the square root is chosen so that $G_\sigma(z)\in \C^-$. The function $ G_\sigma(z)$ extends analytically to a neighborhood of the part of the real axis $(-\infty,-2)\cup (2,+\infty)$. 

\begin{itemize}

\item[{\U}.] First we describe the {\U}-transform of the Wigner semicircle law. 
Since the Wigner measure $\sigma$ is in the free Meixner class with parameters $\gamma=a=0$ and $b=c=1$,  putting these into \eqref{Meixner class} we see, that the measure $\widetilde{U^{s,t}}(\sigma)$ is again in the free Meixner class with density
\[
\widetilde{U^{s,t}}(\sigma)(dx)=\frac{\tau_{s,t}\sqrt{4-x^2}}{(1-\tau_{s,t})x^2+\tau_{s,t}^2}, \quad \text{where}\quad \tau_{s,t}:=(1-s)(1-t).
\]
The measure can have either two atoms or none, since $0=\gamma \neq c=1$. Two atoms can appear if and only if the denominator $f_{s,t}(x):=(1-\tau_{s,t})x^2+\tau_{s,t}^2$ has two real roots, which is possible if and only if $\tau_{s,t}>1$. Then the atoms are in 
\[
x_+=\frac{\tau_{s,t}}{\sqrt{\tau_{s,t}-1}}, \quad x_-=\frac{-\tau_{s,t}}{\sqrt{\tau_{s,t}-1}}.
\]
Here the transition line is the hyperbola $\tau_{s,t}=(1-s)(1-t)=1$, on which the transformation is trivial: $\displaystyle \widetilde{U^{s,t}}(\sigma)=\sigma$. Examples of numerically obtained plots of densities of $\widetilde{U^{s,t}}(\sigma)(dx)$ can be seen in Figure \ref{densities}.

\medskip

\item[{\WT}.] Let us now consider the \WT-transform of $\sigma$. The distribution of $W^{s,t}(\sigma)$  satisfies the defining equation 
\begin{equation*}
\frac{1}{G_{W^{s,t}(\sigma)}(z)} = s+\frac{1+tz^2G_\sigma(z)-tz}{(1+tz)G_\sigma(z)-t}=s+z-\frac{1}{t+\frac{1}{z}+\frac{1}{z^2G_\sigma(z)-z}}.
\end{equation*}
Using $G_\sigma(z)=\frac{1}{z-G_\sigma(z)}$ this can be simplified to
\begin{equation*}
\frac{1}{G_{W^{s,t}(\sigma)}(z)} = s+z-\frac{1}{t+\frac{1}{G_\sigma(z)}}\ \Leftrightarrow \ G_{W^{s,t}(\sigma)}(z) = \frac{1}{z+s-\frac{1}{z+t-G_\sigma(z)}}.
\end{equation*}
 It is instructive to compare the continued fraction expansion of the second equation, namely 
\begin{equation*}
G_{W^{s,t}(\sigma)}(z) = {1\over\displaystyle z+s- {\strut
1\over\displaystyle z+t- {\strut 1\over\displaystyle z-
{\strut 1\over\displaystyle z- {\strut 1\over\ddots}}}}}
\quad \mbox{with}\quad 
G_{\sigma}(z) = {1\over\displaystyle z- {\strut
1\over\displaystyle z- {\strut 1\over\displaystyle z-
{\strut 1\over\displaystyle z- {\strut 1\over\ddots}}}}}.
\end{equation*}
As one can see our transformation acts in a specific way on two levels of the continued fraction in the Wigner semicircular case, however it is quite different from the two-levels $\mathbf{t}$-transformation studied by Wojakowski in \cite{wojakowski10}.

The Cauchy transform can be expressed directly as 
\begin{equation*}
G_{W^{s,t}(\sigma)}(z)= \frac{4tz^2+(4t^2+4st+2)z+4(st^2+s-t)-2z\sqrt{1-\frac{4}{z^2}}}{4[tz^3+(t^2+2st)z^2+(s^2t+2st^2+s-2t)z+s^2(t^2+1)-2st+1]}.
\end{equation*}
Using the Stieltjes inversion formula one gets the density of the \WT-transformation of the Wigner law (for $|x|\leq 2$):
\[
W^{s,t}(\sigma)(dx)=\frac{1}{2\pi}\cdot\frac{\sqrt{4-x^2}}{tx^3+(t^2+2st)x^2+ (s^2t+2st^2+s-2t)x + (st-1)^2+s^2}. 
\]
This defines a family of absolutely continuous  measures on $[-2,2]$. As  can be seen in the picture obtained by numerical simulations in Figure \ref{densities} these measures can have atoms outside $[-2,2]$.   

Let us observe that the measure $W^{s,t}(\sigma)$ is not of the form \eqref{Meixner class}, hence the free Meixner class is not invariant under the \WT-transform. 
\end{itemize}
\end{ex}



\subsection{$\widetilde{U^{s}}$-transformation of the free Meixner laws}

We now focus on the case $s=t$, writting $\widetilde{U^{s}}:=\widetilde{U^{s,s}}$. 
We shall describe the phase transition for the deformation $\mu_{{\bf u}}\mapsto \widetilde{U^{s}}(\mu_{{\bf u}})$, i.e. the problem when the number of atoms changes. In this case we get the transformation of the parameter's vectors
$$
(\gamma, a,b,c)={\bf u}\longmapsto {\bf u}_{s}:=((1-2s)\gamma,\ a,\ b,\ c(1-s)^2),
$$
hence $\widetilde{U^{s}}(\mu_{{\bf u}}) = \mu_{{\bf u}_s}$.
Let us notice that the inverse transform $(\widetilde{U^s)}^{-1}$ is defined by 
$$
(\widetilde{U^s)}^{-1}:(\gamma, a,b,c)={\bf u}\longmapsto \widetilde{{\bf u}_s}:=\left(\frac{\gamma}{1-2s},\ a,\ b,\ \frac{c}{(1-s)^2}\right)
$$ whenever $s\neq 1$ and $s\neq \frac{1}{2}$, which we shall assume in the sequel. Moreover, we shall consider only the case $c\neq 0$ (for $c=0$ we get $\mu=\delta_{\gamma}$). The invertibility of the $\widetilde{U^s}$-transform simplifies the description of the phase transitions in what follows.


The density function can be written as 
\[
\mu_{{\bf u}_{s}}(dx)=\frac{c(1-s)^2\sqrt{4b-(x-a)^2}}{2\pi f_{s}(x)}. 
\]
The transformed measure $\mu_{{\bf u}_s}$ has two atoms if  and only if
$$\displaystyle \Delta_{g_s}:=c^2(1-s)^4[((1-2s)\gamma-a)^2-4b(1-c(1-s)^2)]>0
$$
i.e. if the discriminant $\Delta_{g_s}$ is positive, which, for $s\neq 1$, is equivalent to the inequality 
$\displaystyle ((1-2s)\gamma-a)^2 > 4b(1-c(1-s)^2)$. On the other hand one atom can appear if and only if $c(1-s)^2=1$ and $(1-2s)\gamma \neq a$. The case $s=1$ is degenerated, since then 
$\mu_{{\bf u}_1}=\delta_{\gamma}$ is the atomic measure with one atom.

Now we consider the phase transition produced by the \U-transform. We first describe the case when $\mu$ has one atom and $\mu_{{\bf u}_s}$ has two atoms. 


\begin{prop}
If the free Meixner law $\mu_{\bf u}$, with ${\bf u}=(\gamma, a,b,c)$ and $\gamma\neq 0$ has one atom, then there exists an uncountable range of parameter $s\neq 0$ for which $\mu_{{\bf u}_s}$ has two atoms. This range depends on the position of the point $P_{\bf u}:=(\frac{a}{\gamma},  \frac{b}{\gamma^2})$ with respect to the elipse $E:=\{(x,y)\in \R^2: x^2+4x+4y^2-5y-2xy=0\}$ as follows: 
\begin{enumerate}
\item if $P_{\bf u}$ is inside $E$, then $s\in \R\setminus\{0\}$; 
\item if $P_{\bf u}\in E$ then $s\in \R\setminus \{0, s_0\}$, where $s_0$ is the double root of the quadratic polynomial $4(b+\gamma^2)s^2+4(a\gamma-2b+\gamma^2)s+(\gamma^2-2a\gamma)$;
\item  if $P_{\bf u}$ is outside $E$,  then $s<s_1$ or $s>s_2$, where $s_1, s_2$ are the (different) roots of the quadratic polynomial $4(b+\gamma^2)s^2+4(a\gamma-2b+\gamma^2)s+(\gamma^2-2a\gamma)$.
\end{enumerate}
\end{prop}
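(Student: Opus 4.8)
The plan is to reduce the requirement that $\mu_{{\bf u}_s}$ carry two atoms to the positivity of a single quadratic polynomial in $s$, and then to read the three cases off the sign of its discriminant. Since $\mu_{\bf u}$ has exactly one atom, the atom classification recalled before the proposition forces $c=1$ and $\gamma\neq a$, and we are given $\gamma\neq0$. The transform sends ${\bf u}$ to ${\bf u}_s=\big((1-2s)\gamma,\,a,\,b,\,(1-s)^2\big)$, so $\mu_{{\bf u}_s}$ is again a free Meixner law and I may apply the classification directly to it: two atoms appear exactly when $\Delta_{g_s}>0$. The factor $(1-s)^4$ in $\Delta_{g_s}$ is strictly positive for $s\neq1$, so after cancelling it the two-atom condition becomes the positivity of a quadratic $R(s)$ in $s$, namely the polynomial displayed in parts (2) and (3) of the statement,
\[
R(s)=4(b+\gamma^2)s^2+4(a\gamma-2b+\gamma^2)s+(\gamma^2-2a\gamma).
\]
Its leading coefficient $4(b+\gamma^2)$ is strictly positive because $\gamma\neq0$, so $R$ is an upward parabola and the shape of $\set{s:R(s)>0}$ is dictated solely by the sign of its discriminant $\Delta_R$.

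The heart of the argument is the evaluation of $\Delta_R$. I expect that, after dividing by the positive factor $16\gamma^4$, it equals precisely $\Phi(P_{\bf u})$, where $\Phi(x,y)=x^2+4x+4y^2-5y-2xy$ is the defining polynomial of $E$ and $P_{\bf u}=(a/\gamma,\,b/\gamma^2)$. The quadratic part $x^2-2xy+4y^2$ is positive definite (its matrix has determinant $3>0$), so $\set{\Phi=0}$ is a genuine ellipse, with $\Phi<0$ in its interior and $\Phi>0$ in its exterior. This yields the dictionary: $P_{\bf u}$ interior $\Leftrightarrow\Delta_R<0$, $P_{\bf u}\in E\Leftrightarrow\Delta_R=0$, and $P_{\bf u}$ exterior $\Leftrightarrow\Delta_R>0$.

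The three cases are then immediate from the elementary theory of quadratics. If $P_{\bf u}$ is interior then $\Delta_R<0$ and $R>0$ on all of $\R$, so every $s$ gives two atoms; if $P_{\bf u}\in E$ then $R$ has a single double root $s_0$ and $R>0$ off $s_0$; if $P_{\bf u}$ is exterior then $R$ has two distinct real roots $s_1<s_2$ and $R>0$ exactly on $\set{s<s_1}\cup\set{s>s_2}$. In each case the admissible set is cofinite in a line or a union of two open half-lines, hence uncountable, which already proves the existence assertion. It remains to strip off the value $s=0$: there the transform is the identity and $\mu_{{\bf u}_0}=\mu_{\bf u}$ still has a single atom, which is exactly why $0$ is removed in parts (1) and (2). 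The invertibility of $\widetilde{U^{s}}$ recorded above guarantees that distinct admissible $s$ yield distinct measures, so the range is honestly uncountable.

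The main obstacle is the discriminant computation of the middle paragraph: one must carry the algebra accurately enough to see that $\Delta_R$ collapses, up to the factor $16\gamma^4$, to the ellipse polynomial $\Phi$ --- in particular that the cross term $-2xy$ and the vanishing of the constant part emerge correctly, since it is precisely here that the conic $E$ (rather than some other curve) is singled out. A second, more pedestrian subtlety is the bookkeeping of the exceptional parameters at which the reduction to $R(s)>0$ is invalid, namely $s=0$ and $s=2$, where $(1-s)^2=1$ returns the measure to the one-atom regime; these must be excluded by hand rather than through $R$.
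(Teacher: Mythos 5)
Your proposal follows the paper's own proof step for step: the one-atom hypothesis forces $c=1$ and $\gamma\neq a$; the two-atom condition for $\mu_{{\bf u}_s}$ is reduced (after cancelling the positive factor $(1-s)^4$) to the positivity of the quadratic $\varphi(s)=4(b+\gamma^2)s^2+4(a\gamma-2b+\gamma^2)s+(\gamma^2-2a\gamma)$; and the substitution $a=x\gamma$, $b=y\gamma^2$ turns the sign of the discriminant $\Delta(\varphi)=4a\gamma^3+(a^2-5b)\gamma^2-2ab\gamma+4b^2$ into the position of $P_{\bf u}$ relative to $E$, whence the three cases are elementary facts about an upward parabola — exactly the paper's argument, with the two computations you flag as the ``heart'' being precisely the identities the paper records. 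Your write-up does add two refinements worth noting: you verify that the quadratic part $x^2-2xy+4y^2$ of the defining polynomial of $E$ is positive definite, so that $E$ is genuinely an ellipse whose interior is where the polynomial is negative (the paper uses this dictionary tacitly), and you correctly insist that $s=2$ be excluded by hand alongside $s=0$, since there $c(1-s)^2=1$ and the paper's own atom classification puts $\mu_{{\bf u}_2}$ back in the one-atom (linear $f$) regime — an exceptional value that both the statement's case (1) and the paper's proof overlook, and which the reduction to $\varphi(s)>0$ cannot detect.
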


\begin{proof}
Since $\mu_{\bf u}$ has one atom, hence $c=1$ and $\gamma \neq a$. Then 
$\mu_{{\bf u}_s}$ has two atoms for these $s$ for which 
\[
\varphi(s):=4(b+\gamma^2)s^2+4(a\gamma-2b+\gamma^2)s+(\gamma^2-2a\gamma)>0. 
\]
There are three possible cases, depending on the sign of the discriminant $\Delta(\varphi)=4a\gamma^3+(a^2-5b)\gamma^2 -2ab\gamma+4b^2$.   

Putting $a=x \gamma$ and $b=y\gamma^2$, (and assuming $\gamma\neq 0$), the condition $\Delta(\varphi)=0$ is equivalent to $x^2+4x+4y^2-5y-2xy=0$, i.e. the point $P_{\bf u}$ is on the ellipse $E$. In a similar manner we get the two other cases. 
\end{proof}
\begin{rem}
If $\gamma=0$ then $\vp(s)=4bs^2-8bs>0$ gives $s<0$ or $s>2$. Hence any Meixner class measure $\mu$ with one atom and with vanishing first moment $\gamma=0$ is transformed into a Meixner class measure $U^s(\mu)$ with two atoms. 
\end{rem}


The next phase transition is from no atoms in $\mu$ to one atom in $\mu_{{\bf u}_s}$ ($s\neq 1$). In what follows we shall use the notation $\alpha:=\frac{a}{\gamma}$ and  $\beta:=\frac{b}{\gamma^2}$. 

\begin{prop}
If the free Meixner law $\mu_{\bf u}$, with ${\bf u}=(\gamma, a,b,c)$ and $\gamma\neq 0$, has no atoms, then the \U-transform $U^s(\mu_{\bf u}):=\mu_{{\bf u}_s}$ has one atom for the following ranges of the parameter $s\in \R$: 
\begin{enumerate}
\item[(1a)] if $c=1$ and $\gamma=a$ then $s=2$;
\item[(1b)] if $c\neq 1$ and $(\gamma-a)^2\leq4b(1-c)$, then for  $r:=\frac{1-\alpha}{2\sqrt{\beta}}$
\begin{equation}\label{0 to 1 atom}
s < 1- \left(1-r^2\right)^{-1/2} \ \text{or} \ s > 1+ \left(1-r^2\right)^{-1/2}.
\end{equation}
\end{enumerate}
\end{prop}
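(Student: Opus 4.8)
The plan is to track how the atomic part of a free Meixner law is controlled by the pair $(\gamma_s,c_s)$ with $\gamma_s:=(1-2s)\gamma$ and $c_s:=c(1-s)^2$, and to read off the number of atoms from the Saitoh--Yoshida criterion recalled around \eqref{Meixner class}. By the invariance established earlier, $\mu_{{\bf u}_s}$ is again of the form \eqref{Meixner class} with parameter vector ${\bf u}_s=(\gamma_s,a,b,c_s)$, so the presence of exactly one atom is the degenerate (linear) instance of that criterion: $\mu_{{\bf u}_s}$ carries a single atom precisely when the leading coefficient $1-c_s$ of the denominator $f_s$ vanishes while $f_s$ stays genuinely linear, i.e.\ when $c_s=1$ and $\gamma_s\neq a$. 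First I would isolate this as the working condition, noting that the hypothesis ``$\mu_{\bf u}$ has no atoms'' excludes the two-atom regime for the starting measure, so the only mechanism to produce one atom is to drive $c_s$ to the critical value $1$.

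For part (1a), with $c=1$ and $\gamma=a$, the equation $c_s=(1-s)^2=1$ has the two roots $s\in\{0,2\}$; here $s=0$ is the identity transform (no atom), while at $s=2$ one has $\gamma_2=(1-4)\gamma=-3\gamma\neq\gamma=a$ since $\gamma\neq0$, so $s=2$ is the unique admissible value. For part (1b) the substitution $\alpha=a/\gamma$, $\beta=b/\gamma^2$ turns the hypothesis $(\gamma-a)^2\leq 4b(1-c)$ into $r^2\leq 1-c$, that is $c\leq 1-r^2$, where $r^2=(\gamma-a)^2/(4b)=(1-\alpha)^2/(4\beta)$; in particular $1-r^2\geq c>0$. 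Solving the critical equation $c_s=c(1-s)^2=1$ gives $(1-s)^2=1/c\geq(1-r^2)^{-1}$, hence $|1-s|\geq(1-r^2)^{-1/2}$, which is exactly the stated range $s<1-(1-r^2)^{-1/2}$ or $s>1+(1-r^2)^{-1/2}$. I would then close the argument by checking that $\gamma_s\neq a$ at these values for $\gamma\neq 0$, an easily verified nondegeneracy.

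The step I expect to be the main obstacle is pinning down the one-atom criterion itself, not the subsequent algebra. Since the transformed weight $c_s$ may exceed $1$, one must be sure that crossing $c_s=1$ really produces a single \emph{genuine} atom, of positive mass and located outside $[a-2\sqrt b,\,a+2\sqrt b]$, rather than a spurious zero of $f_s$ lying on the wrong branch of the Cauchy transform. The cleanest way to control this is to argue directly from the reciprocal Cauchy transform $1/G_{\mu_{{\bf u}_s}}(z)=z-\gamma_s-c_s T(z)$, where $T(z)=\tfrac12\bigl((z-a)-\sqrt{(z-a)^2-4b}\bigr)$ is the branch-fixed contribution of the semicircular tail of the continued fraction \eqref{Meixner class continued fraction}, and to count sign changes of this function across the spectral edges $a\pm2\sqrt b$. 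This edge-evaluation simultaneously yields the companion condition $\gamma_s\neq a$ and makes transparent that the boundary instance $c=1-r^2$ (equality in the hypothesis) corresponds exactly to the endpoints $s=1\pm(1-r^2)^{-1/2}$.
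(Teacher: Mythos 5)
Your argument is correct and takes essentially the same route as the paper: both reduce the question to the transformed parameter vector ${\bf u}_s=\big((1-2s)\gamma,\,a,\,b,\,c(1-s)^2\big)$, invoke the one-atom criterion $c_s=1$ together with $\gamma_s\neq a$, and then solve $c(1-s)^2=1$ under the no-atom hypothesis $r^2+c\leq 1$ to obtain $|1-s|\geq (1-r^2)^{-1/2}$ (resp.\ $s\in\{0,2\}$ with $s=0$ excluded in case (1a)). The extra branch/sign-change analysis of the reciprocal Cauchy transform that you propose as a safeguard is not carried out in the paper, which simply relies on the atom classification of \cite{saitoh_yoshida} recalled before \eqref{Meixner class}.
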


\begin{proof}
\begin{enumerate}
\item[(1a)] 
If $c=1$ and $\gamma=a$, then $f(x)\equiv b$ is a constant function, and $\mu_{\bf u}$ is  related to the Wigner law (Example \ref{ex_wigner}): $\displaystyle \sigma(dw)=\frac{1}{\pi}\sqrt{1-w^2}$ by the transformation $\displaystyle w=\frac{x-a}{2\sqrt{b}}$. Then $\mu_{{\bf u}_s}$ has one atom if and only if $qc=1$ and $p\gamma\neq a$, for $q=(1-s)^2$ and $p=1-2s$. Thus $p\neq 1$ implies $s\neq 0$ and thus $q=1$ must imply $s=2$.  


\item[(1b)]
If $c\neq 1$ and $(\gamma-a)^2\leq4b(1-c)$ (i.e.$\Delta_g \leq 0$), then $\left(\frac{1-\alpha}{2\sqrt{\beta}} \right)^2+c\leq 1$
Then $\mu_{{\bf u}_s}$ can have exactly one atom if and only if $qc=1$ (i.e. $s\neq 1$ is so that $c= \frac{1}{(1-s)^2}$) and $p\gamma\neq a$ (i.e. $\alpha \neq 1-2s$) and $\left(\frac{1-\alpha}{2\sqrt{\beta}} \right)^2+\left(\frac{1}{s-1} \right)^2\leq 1$, which is equivalent to \eqref{0 to 1 atom}.
\end{enumerate}
\end{proof}

The last case of the phase transition is from no atoms in $\mu$ to two atoms in $\mu_{{\bf u}_s}$.

\begin{prop}
If the free Meixner law $\mu_{\bf u}$, with ${\bf u}=(\gamma, a,b,c)$ and $\gamma\neq 0$, has no atoms, then the \U-transform $U^s(\mu_{\bf u}):=\mu_{{\bf u}_s}$ has two atoms for the following ranges of the parameter $s\in \R$: 
\begin{enumerate}
\item[(2a)] if $c=1$ and $\gamma=a$ then $s\neq 2$ and $s>\frac{2b}{a^2+b}$;
\item[(2b)] if $c\neq 1$ and $(\gamma-a)^2 < 4b(1-c)$, then $s<s_1$ or $s>s_2$, where $s_1<s_2$ are the solutions of $(c\beta+1)s^2-2s(c\sqrt{\beta}+r)+d\beta=0$.
\item[(2c)] if $c\neq 1$ and $(\gamma-a)^2 = 4b(1-c)$ and $cb+\gamma^2-a\gamma=0$, then $s\in \R\setminus \{0\}$;
\item[(2d)] if $c\neq 1$ and $(\gamma-a)^2 = 4b(1-c)$ and $cb+\gamma^2-a\gamma\neq 0$, then $s<\min\{0, s_2\}$ or  $s>\max\{0, s_2\}$, where  $s_2=2-\frac{\gamma^2+a\gamma}{\gamma^2+bc}$.
\end{enumerate}
\end{prop}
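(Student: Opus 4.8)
The plan is to reduce the entire statement to a sign analysis of a single quadratic in $s$. Since $\widetilde{U^s}(\mu_{\mathbf u})=\mu_{\mathbf{u}_s}$ with $\mathbf{u}_s=((1-2s)\gamma,a,b,c(1-s)^2)$, the presence of two atoms is governed by the discriminant condition $\Delta_{g_s}>0$, where
\[
\Delta_{g_s}=c^2(1-s)^4\big[((1-2s)\gamma-a)^2-4b(1-c(1-s)^2)\big].
\]
As $c\neq0$ and we work with $s\neq1$, the prefactor $c^2(1-s)^4$ is strictly positive, so the condition is equivalent to $\Phi(s)>0$, where, after expanding and collecting powers of $s$,
\[
\Phi(s):=4(\gamma^2+bc)\,s^2+4(a\gamma-\gamma^2-2bc)\,s+\big[(\gamma-a)^2-4b(1-c)\big].
\]
The two qualitative facts I would extract first are that the leading coefficient $4(\gamma^2+bc)$ is strictly positive (since $\gamma\neq0$ and $b,c\ge0$) and that the constant term is exactly $\Phi(0)=(\gamma-a)^2-4b(1-c)=\Delta_g/c^2$.

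The decisive structural observation is that the standing hypothesis — $\mu_{\mathbf u}$ has no atoms — forces $\Delta_g\le0$, hence $\Phi(0)\le0$; thus $\Phi$ is an upward parabola whose value at the origin is nonpositive, and this organizes the four cases. When $c=1$ and $\gamma=a$ (case (2a)) both the constant term and the $a\gamma-\gamma^2$ part of the linear term vanish, so $\Phi(s)=4s\big[(a^2+b)s-2b\big]$ factors and gives $\Phi(s)>0$ precisely for $s<0$ or $s>\tfrac{2b}{a^2+b}$. When $c\neq1$ (cases (2b)–(2d)) the no-atom hypothesis reads $(\gamma-a)^2\le4b(1-c)$ with $c<1$, and I would split according to the sign of $\Phi(0)$: if $\Phi(0)<0$ (case (2b)) the parabola has two real roots $s_1<0<s_2$ straddling the origin and $\Phi(s)>0$ off $[s_1,s_2]$; if $\Phi(0)=0$ then $s=0$ is a root and the companion root is $s_2=\frac{\gamma^2+2bc-a\gamma}{\gamma^2+bc}=2-\frac{\gamma^2+a\gamma}{\gamma^2+bc}$, producing the double-root subcase $s_2=0$ (case (2c), where $\Phi(s)>0$ for all $s\neq0$) and the distinct-root subcase $s_2\neq0$ (case (2d), where $\Phi(s)>0$ for $s<\min\{0,s_2\}$ or $s>\max\{0,s_2\}$). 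In each case one rewrites the root locations in the normalized variables $\alpha=a/\gamma$, $\beta=b/\gamma^2$, $r=\tfrac{1-\alpha}{2\sqrt\beta}$ used throughout this section to recover the stated intervals.

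The step I expect to be the main obstacle is \emph{not} the quadratic arithmetic but the careful bookkeeping of the degenerate parameter values at which the transformed denominator $f_s$ ceases to be genuinely quadratic, namely where $c(1-s)^2=1$. At such $s$ the reduction from ``two atoms'' to ``$\Phi(s)>0$'' breaks down: $f_s$ is then linear or constant, the correct count is governed by the one-atom rule ($c'=1$, $\gamma'\neq a'$) rather than by the two-atom discriminant, and one checks that there $\Phi(s)=(\gamma'-a')^2/(c^2(1-s)^4)\ge0$, so the naive criterion would miscount these as two-atom points. Concretely, in case (2a) the exceptional point is $s=2$, which is why it is excised even though $\Phi(2)>0$; for $c\neq1$ the analogous values are $s=1\pm1/\sqrt c$, and I would verify that they either lie outside the relevant $\{\Phi>0\}$ set or coincide with the separately treated one-atom transition of the preceding proposition. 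A secondary point requiring care is the boundary $\Phi(0)=0$, where $\mu_{\mathbf u}$ sits exactly on $\Delta_g=0$: here one must confirm that $s=0$ returns the untransformed atomless measure and that genuine two-atom behaviour occurs only strictly away from the root(s) of $\Phi$, which justifies the removal of $s=0$ in cases (2c) and (2d).
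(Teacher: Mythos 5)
Your reduction is exactly the paper's: both arguments test positivity of the discriminant $\Delta_{g_s}$, and your $\Phi$ is just $4\gamma^2\psi$, where $\psi(s)=(c\beta+1)s^2-2s\sqrt{\beta}\,(c\sqrt{\beta}+r)+d\beta$ is the quadratic the published proof analyzes; the organization by the sign of $\Phi(0)=\Delta_g/c^2\le 0$ is also the paper's. In fact your algebra is more reliable than the printed text at two points, and you should say so explicitly instead of presenting your conclusions as identical to the statement. In (2a) the paper expands $\Delta_{g_s}$ as $4(1-s)^4(a^2s+b(s-2))$, losing a factor of $s$ (the correct bracket is $4s\left[(a^2+b)s-2b\right]$), and therefore reports only $s>\frac{2b}{a^2+b}$; your extra branch $s<0$ is genuinely there, as one can cross-check against the paper's own $\psi$, which for $c=1$, $\gamma=a$ reduces to $s\left[(\beta+1)s-2\beta\right]$. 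Likewise in (2c): your criterion for the vanishing linear term, $\gamma^2+2bc-a\gamma=0$ (equivalently $s_2=0$), is the right one, while the printed condition $cb+\gamma^2-a\gamma=0$ is off by a factor $2$ on the $cb$ term --- consistent, as you note, with the formula $s_2=2-\frac{\gamma^2+a\gamma}{\gamma^2+bc}$ that both you and the paper obtain. So what you actually prove is a corrected version of the proposition; a blind proof that silently contradicts the stated range in (2a) without flagging the discrepancy invites the suspicion that you, not the paper, made the slip.

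The one place where your write-up has a real hole is the degeneracy you yourself raise. Your instinct is correct, and sharper than the paper, which performs the excision $c(1-s)^2\neq 1$ only in case (2a); but your hoped-for resolution fails. In cases (2b)--(2d) the no-atom hypothesis forces $c<1$, so the exceptional points are $s_{\pm}=1\pm 1/\sqrt{c}$ with $s_-<0<2<s_+$, and there $\Phi(s_{\pm})=\left((1-2s_{\pm})\gamma-a\right)^2$, which is strictly positive unless $(1-2s_{\pm})\gamma=a$. Since $\Phi<0$ on $(s_1,s_2)$ in case (2b), the points $s_{\pm}$ generically lie strictly \emph{inside} the outer branches $\{s<s_1\}\cup\{s>s_2\}$ (and inside $\mathbb{R}\setminus\{0\}$ in (2c)--(2d), where one checks $(1-2s_{\pm})\gamma\neq a$ is automatic); yet at these points $f_s$ is linear and the transformed measure has exactly \emph{one} atom, by the one-atom rule and the preceding proposition. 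So the correct statement must excise $s_{\pm}$ from the ranges in (2b)--(2d) exactly as $s=2$ is excised in (2a). Leaving this as ``I would verify,'' with a guess that turns out wrong, is the only genuine gap in your argument --- though it is a gap the published proof shares, since the paper never revisits the condition $c(1-s)^2\neq 1$ after case (2a).
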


\begin{proof}
The transformed measure $\mu_{{\bf u}_s}$ has two atoms if and only if the discriminant $\Delta_{g_s}=c^2(1-s)^4[((1-2s)\gamma-a)^2-4b(1-c(1-s)^2)]$ is positive, or equivalently, $((1-2s)\gamma-a)^2>4b(1-c(1-s)^2)$.

\begin{enumerate}
\item[(2a)]
If $c=1$ and $\gamma=a$ then $\mu_{{\bf u}_s}$ has two atoms if and only if $qc\neq 1$ (i.e. $0\neq s\neq 2$) and $4(1-s)^4(a^2s+b(s-2))>0$, which is equivalent to 
$s>\frac{2b}{a^2+b}$. Since $2>\frac{2b}{a^2+b}$, the case $s=2$ must be excluded from the range of $s$ and this way we get the conclusion of (2a). 
\item[(2b)]
In this case we have $c\neq 1$, $(\gamma-a)^2 < 4b(1-c)$ and $((1-2s)\gamma-a)^2 > 4b(1-c(1-s)^2)$. 
Thus the phase transition (no atoms in $\mu_{\bf u}$ and 2 atoms in $\mu_{{\bf u}_s}$) would be if simultanously 
\[
\left(\frac{1-\alpha}{2\sqrt{\beta}} \right)^2+c< 1, \quad \text{and}\quad \left(\frac{1-\alpha}{2\sqrt{\beta}}-\frac{s}{\sqrt{\beta}}\right)^2+c\left(1-s\right)^2>1, 
\] (with $\alpha=\frac{a}{\gamma}, \beta=\frac{b}{\gamma^2}$). 
Observe that this does not happen for $s=0$. 
Putting $r=\frac{1-\alpha}{2\sqrt{\beta}}$ and $d=r^2+c-1$ the above becomes equivalent to $d<0$ and
\[
\psi(s):=(c\beta+1)s^2-2s\sqrt{\beta}(c\sqrt{\beta}+r)+d\beta>0.
\]
Since both $c$ and $\beta$ are positive and $c\beta+1=\frac{cb}{\gamma^2}+1\geq 1$ and $d<0$, the discriminant $\Delta_{\psi}$ of the quadratic polynomial $\psi$ is positive and $\psi(0)<0$. Hence $\psi$ has two real roots $s_1<0< s_2$, and thus $\psi(s)>0$ if and only if $s<s_1$ or $s>s_2$. 

\item[(2c)] If $(\gamma-a)^2 = 4b(1-c)$ and $cb+\gamma^2-a\gamma=0$, then $d=0$ and $c\sqrt{\beta}+r=0$, hence $s_1=0$ is a double root of $\psi(s):=(c\beta+1)s^2-2s\sqrt{\beta}(c\sqrt{\beta}+r)+d\beta=(c\beta+1)s^2$. Thus $\psi(s)>0$ if and only if $s\neq 0$. 

\item[(2d)] If $(\gamma-a)^2 = 4b(1-c)$ and $cb+\gamma^2-a\gamma\neq 0$, then $d=0$ and  $s_1=0$ is a root of $\psi(s):=(c\beta+1)s^2-2s\sqrt{\beta}(c\sqrt{\beta}+r)+d\beta=(c\beta+1)s^2-2s\sqrt{\beta}(c\sqrt{\beta}+r)$. The second root is $s_2=2-\frac{\gamma^2+a\gamma}{cb+\gamma^2}$. Observe that $s_2>0$ if and only if $\gamma^2-a\gamma+2bc>0$ and using $(\gamma-a)^2=4b(1-c)$ this can be written equivalently as $2b(2-c)>a^2$. 

\end{enumerate}

\end{proof}




\end{document}